\documentclass[10pt,a4paper,reqno]{amsart}
\usepackage{amsmath,amsthm,amssymb}
\usepackage{mathrsfs,mathtools,nccmath,enumerate}
\usepackage{tikz-cd}
\usepackage{xcolor}
\usepackage[thinlines]{easytable}
\usepackage{multicol}
\usepackage[mathscr]{euscript}
\usepackage{longtable}
\usepackage{booktabs}
\usepackage{graphicx}
\usepackage{float}
\usepackage{microtype}
\usepackage{subcaption}
\makeatletter
\renewcommand{\p@subfigure}{\thefigure}
\makeatother

\usepackage{hyperref}

\usepackage[super]{nth}

\newtheorem{theorem}{Theorem}[section]
\newtheorem{lemma}[theorem]{Lemma}

\theoremstyle{definition}
\newtheorem{definition}{Definition}[section]
\newtheorem{conjecture}{Conjecture}[section]
\newtheorem{example}{Example}[section]
\theoremstyle{remark}
\newtheorem{remark}[theorem]{Remark}

\usepackage[export]{adjustbox} 

\begin{document}

	\title[A Refinement of the Q-Polynomial for Twisted Knots]{A Refinement of the Q-Polynomial for Twisted Knots}
	
	
	\author[T. MAHATO]{Tumpa Mahato$^*$}
	\address{Department of Mathematics,\\ Indian Institute of Technology, Ropar, Punjab 140001, India}
	\email{staff.tumpa.mahato@iitrpr.ac.in}
	\thanks{*Corresponding author: staff.tumpa.mahato@iitrpr.ac.in}
	
	\author[P. MADETI]{Prabhakar Madeti}
	\address{Department of Mathematics,\\ Indian Institute of Technology, Ropar, Punjab 140001, India}
	\email{prabhakar@iitrpr.ac.in}

	\makeatletter
	\@namedef{subjclassname@2020}{%
		\textup{2020} Mathematics Subject Classification}
	\makeatother
	\subjclass[2020]{Primary 
		57K12; Secondary 57K14}
	
	\keywords{Twisted knot, generating set of oriented moves, polynomial invariant, Gordian distance, Vassiliev invariant. }
%

	\begin{abstract}
This paper introduces a two-variable polynomial invariant for oriented twisted knots, denoted by $Q_{K}^{z}(s,t)$, refining the $Q$-polynomial of N. Kamada and S. Kamada \cite{NaoSei}. We exhibit an infinite family of twisted knots indistinguishable by the $Q$-polynomial but separated by the $Q^z$-polynomial. To prove invariance, we first determine a generating set of oriented Reidemeister moves for twisted knot diagrams, extending the result of Ali \cite{Dan} for oriented virtual knots; this result is new and of independent interest, as it provides the minimal framework needed to verify invariance of any oriented twisted knot invariant. As further applications, we derive an explicit crossing change 
formula, obtain lower bounds on the Gordian distance between homotopic twisted knots, examine the existence of cosmetic crossings in a twisted knot diagram, and finally prove that $Q^{z}_{K}(s,t)$ is a Vassiliev invariant of order one.
	\end{abstract}
	
	\maketitle
	
\section{Introduction}
Twisted links, introduced by Bourgoin \cite{Bur} as a generalization of Kauffman's virtual knot theory  \cite{Kau}, are defined as stable ambient isotopy classes of oriented circles in oriented thickenings that may not be orientable. Twisted link diagrams are defined as marked generic planar curves, where the
markings identify the usual classical crossings, virtual crossings, and bars on edges. Virtual crossings represent places where the link passes over itself along a handle, and bars represent places where an arc of the link moves through a cross-cap. A twisted link with one component is called a twisted knot. New Reidemeister moves $T_{1},T_{2},T_{3}$ are included with the existing generalized Reidemeister moves $C_{1}, C_{2}, C_{3}, V_{1}, V_{2}, V_{3}, V_{4}$ (Figure~\ref{fig:moves}) to define equivalence relation between twisted link diagrams. A trivial twisted link is a link diagram with no classical crossings, having either zero or one bar in each component. We assume throughout that all diagrams are oriented, i.e., each component is equipped with a fixed direction of traversal. 

\begin{figure}[htbp]
	\centering
	\includegraphics[width=0.9\textwidth]{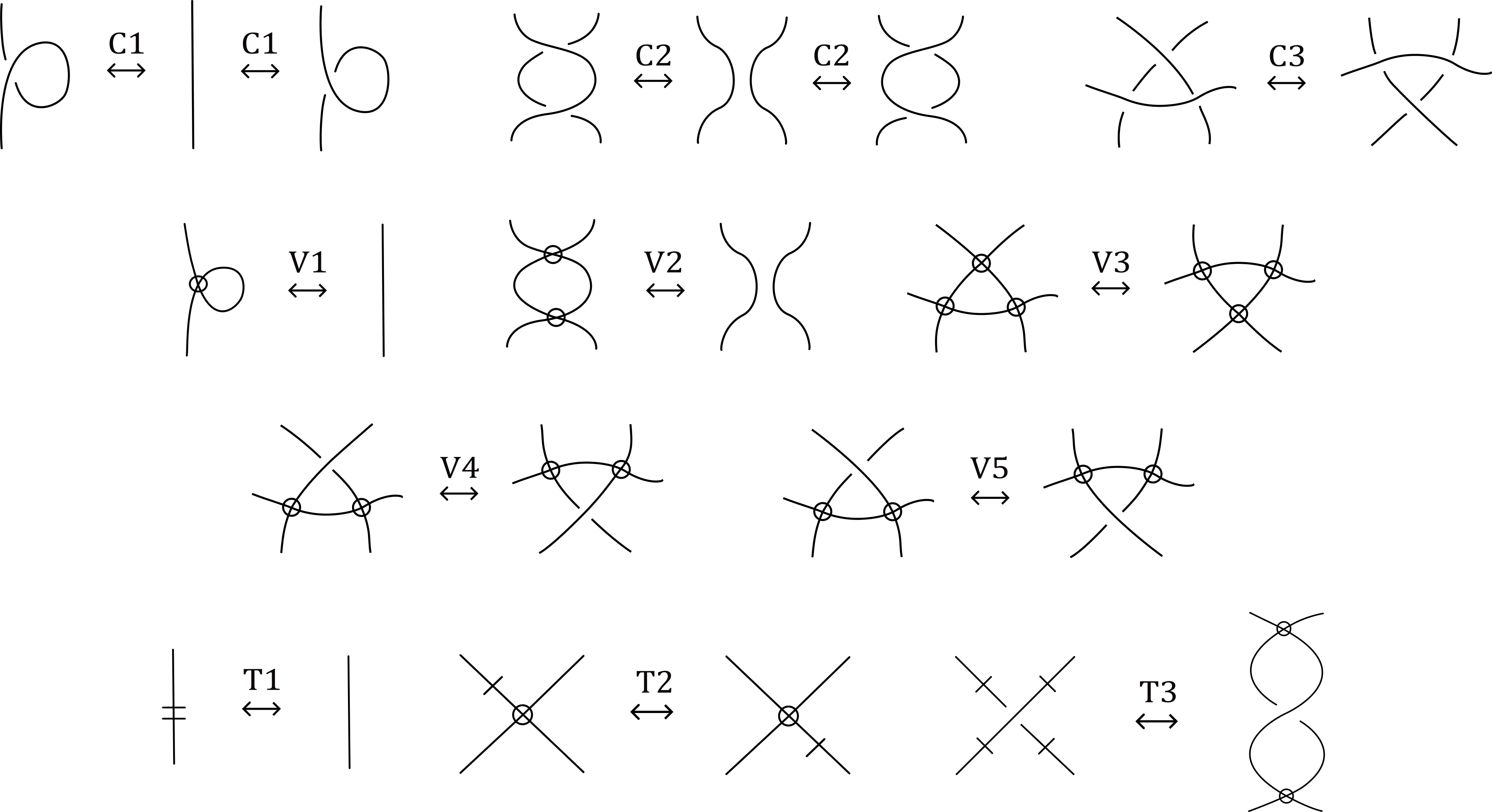}
	\caption{The generalized Reidemeister moves for twisted knot diagrams.}
	\label{fig:moves}
\end{figure} 
The construction of polynomial invariants for virtual and twisted links has been an active area of research since Kauffman's 
foundational work on virtual knot theory \cite{Kau,Kau2}. 
A productive source of such invariants is the \emph{affine index} 
of a crossing. 
The affine index polynomial for virtual knots, defined by 
Kauffman \cite{Kau2} and studied further by Folwaczny--Kauffman 
\cite{Fol}, assigns a Laurent polynomial to each virtual knot 
by summing signed contributions weighted by the affine indices 
of its crossings. Kamada and Kamada extended this construction to 
twisted links and defined the $Q$-polynomial that is invariant under 
all twisted Reidemeister moves \cite{NaoSei}. The $Q$-polynomial is defined as follows: Let $K$ be a twisted link diagram and  denote by $SelfC(K)$, the set of all classical self-crossings. Here a classical self-crossing of $K$ means a classical crossing of $K$ such that the over path and the under path are on the same component of $K$. Let $c$ be a classical self-crossing of
 $K$. Let $K_c$
 be the twisted link diagram obtained from $K$ with components labeled $0$ and $1$, obtained by applying 
 the oriented smoothing at $c$ as shown in Figure~\ref{fig:smoothing}. Then the $Q$-polynomial is defined by
  \[
Q_{K}(s,t) = \sum_{c \in SelfC(K)} sgn(c)\, \bigl(s^{\bar\rho^{0}(c)}t^{p^{0}(c)}-1\bigr)\bigl(s^{\bar\rho^{1}(c)}t^{p^{1}(c)}-1\bigr),
\]
where, for $l \in \{0,1\}$, $\bar\rho^{\,l}(c) \in \{0,1\}$ records the parity of the affine index on the $l$-smoothing component at $c$, and $p^{\,l}(c)\in\{0,1\}$ records the parity of the number of bars on that component.

A limitation of the $Q$-polynomial is that a crossing's contribution to the sum vanishes trivially whenever it has even affine index and an even number of bars on a smoothing component. Thus $Q$-polynomial cannot distinguish two twisted links that differ only by such crossings. This motivates a refinement of the $Q$-polynomial. In this paper, we define a two-variable polynomial $Q^{z}_{K}(s,t)$ for a twisted knot diagram $K$ that is a refined version of $Q$-polynomial for twisted knots. Our construction can be generalized for twisted links too. For this purpose, we call a crossing with even affine index on any component a \emph{zero crossing}, and define a refinement of affine index, called the \emph{zero affine index} for the zero crossings which we use to define $Q^{z}_{K}(s,t)$ for a twisted knot diagram $K$. 

A foundational but so far unresolved problem in twisted knot theory 
is the determination of a \emph{generating set} of oriented 
Reidemeister moves. This problem is non-trivial 
because each unoriented Reidemeister move splits into several oriented versions depending on the relative orientations of the strands involved, and not all of these versions are independent. 
For classical knots, Polyak determined a minimal 
generating set of oriented Reidemeister moves \cite{Polyak}. The analogous problem for virtual knots was 
resolved by Ali \cite{Dan}, who identified a generating 
set of oriented Reidemeister moves for virtual knot diagrams. No analogue of this result existed for twisted knots. 
The absence of such a generating set is an obstacle not just for 
the present paper but for any attempt to define and verify an 
oriented invariant of twisted knots. In this paper, we also provide such a minimal generating set for oriented twisted knot diagrams, showing that every other oriented twisted 
Reidemeister move can be realized as a finite sequence of moves 
from this collection and isotopy of $\mathbb{R}^{2}$.

The paper is organized as follows. In Section~\ref{sec:Qz}, we recall the definition of the affine index of a classical crossing of a twisted knot diagram and define the polynomial $Q^{z}_{K}(s,t)$. In Section~\ref{sec:invariance}, we provide a generating set of oriented Reidemeister moves for twisted knot diagrams extending the result of Ali \cite{Dan} for virtual knots, and prove invariance of $Q^{z}_{K}(s,t)$ under all classical, virtual, and twisted Reidemeister moves. In Section~\ref{sec:examples}, we give examples of twisted knot diagrams with identical
$Q$-polynomials that are distinguished by the $Q^z$-polynomial, and exhibit an infinite family $\{T_n\}_{n \in \mathbb{N}}$ of twisted knots sharing the same $Q$-polynomial but with pairwise distinct $Q^z$-polynomials. Section~\ref{sec:operations} establishes further properties of $Q^{z}_{K}(s,t)$ under orientation reversal and mirror imaging. Section~\ref{sec:applications} uses $Q^{z}_{K}(s,t)$ to give a lower bound on the Gordian distance between homotopic twisted knots, address the cosmetic crossing conjecture for twisted knots, and show that $Q^{z}_{K}(s,t)$ is a Vassiliev invariant of order one.
\section{The \texorpdfstring{$Q^{z}$}{Qz}-polynomial for twisted knots}\label{sec:Qz}
	
First we recall the definition of the affine index of a twisted knot diagram at a crossing, due to 
N.~Kamada and S.~Kamada \cite{NaoSei}.	Let $K$ be an oriented twisted knot
	 diagram. Since $K$ is a knot diagram, every classical crossing of $K$ is necessarily a self-crossing; that is, $C(K) = SelfC(K)$. We therefore use the notation $C(K)$ throughout the remainder of the paper.
	  Let $c \in C(K)$ and $K_{c}$ be the twisted link diagram obtained from $K$, with components labeled $0$ and $1$, by applying 
	 the oriented smoothing at $c$ as shown in Figure~\ref{fig:smoothing}. 
	 \begin{figure}
	 	\centering
	 	\includegraphics[width=0.7\textwidth]{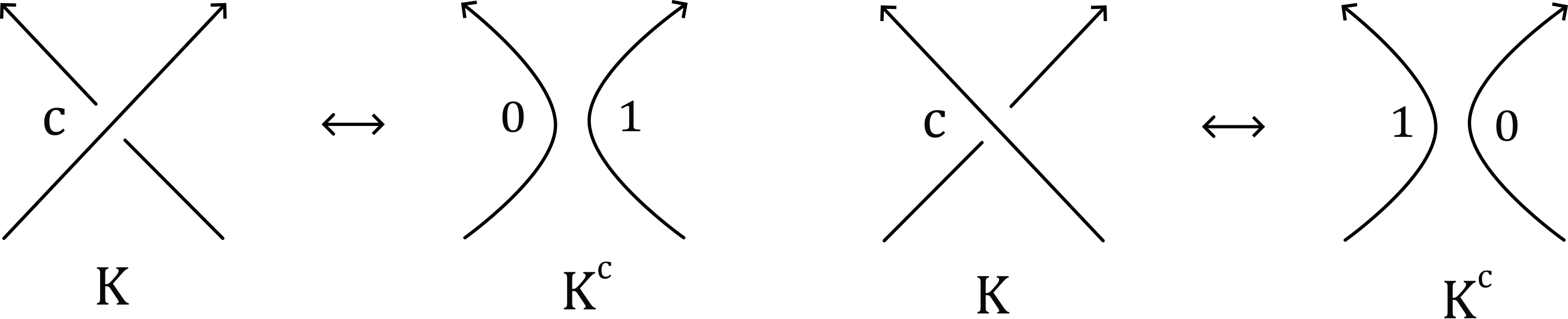}
	 	\caption{Smoothing components at a positive and negative classical self-crossing of $K$.}
 	\label{fig:smoothing}
	 \end{figure}
	 Starting near $c$ 
	on the $l$-component of $K_{c}$ and traversing it in the direction of the 
	orientation, let $\gamma_1, \gamma_2, \ldots, \gamma_{k_l}$ be the 
	sequence of classical crossings encountered in this order before returning to $c$ (Fig.~\ref{fig:ex1}).
	The \emph{affine index} of $K$ at $c$ with respect to the 
	$l$-component is
	\[
	ind^{\,l}(c,K) = \sum_{i=1}^{k_l} (-1)^{\epsilon_{i}}\,sign(\gamma_{i}),
	\]
	where
	\[
	\epsilon_{i} =
	\begin{cases}
		0, & \text{if } \gamma_i 
		\text{ is an undercrossing on the } l\text{-component},\\
		1, & \text{if } \gamma_i 
		\text{ is an overcrossing on the } l\text{-component}.
	\end{cases}
	\]
	Note that self-crossings of the $l$-component appear twice in the 
	sequence $\gamma_1, \ldots, \gamma_{k_l}$ with opposite crossing types, 
	and therefore contribute zero to $ind^{\,l}(c,K),\, l=0,1$.
	
\begin{definition}
	A crossing $c \in C(K)$ is called a \emph{zero crossing} if its 
	affine index with respect to either component of $K_{c}$ is even i.e., $ind^{\,l}(c,K)=0 \pmod2$ for $l=0$ or $1$. 
	We denote the set of all zero crossings of $K$ by $ZC(K)$.
\end{definition}
	
	\subsection{Zero affine index}
	
	For the zero crossings of $K$, we define the {\it zero affine index} with respect to the $l$-component ($l\in \{0,1\}$) by summing contributions only from other zero crossings encountered along the $l$-component. It records how 
	the zero crossings of $K$ contribute to the affine index computation 
	at $c$.
	
	\begin{definition}
		For a crossing $c \in ZC(K)$ and $l \in \{0,1\}$, the 
		\emph{zero affine index} of $K$ at $c$ with respect to the 
		$l$-component is
		\[
		zind^{\,l}(c,K) =  \sum_{\gamma_{i} \in ZC(K)} (-1)^{\epsilon_{i}}\,sign(\gamma_{i}),
		\]
		 where the sum is taken only over those crossings encountered on 
		the $l$-component of $K_{c}$ that are zero crossings of $K$. We then 
		define
		\[
		z^l(c) = zind^{\,l}(c,K)\pmod{2},\quad l \in \{0,1\}.
		\]
	\end{definition}

		\begin{remark}
		Since  $K$ is a twisted knot diagram, $z^{0}(c)=z^{1}(c)$ for $ c \in ZC(K)$. Hence, we only need to compute one of  $z^{0}(c)$ or $z^{1}(c)$. 
	\end{remark}
	For simplicity, let us write $z(c):=z^{0}(c)=z^{1}(c)$.

	\begin{definition}
		Let $K$ be an oriented twisted knot diagram. We define the $Q^z$-polynomial as a refinement of the $Q$-polynomial, denoted by $Q^{z}_{K}(s,t) \in \mathbb{Z}[s,t]$ and is defined by
		\begin{align*}
				Q^{z}_{K}(s,t) 
				=\; & \sum_{c \in ZC(K)} 
				sgn(c)\bigl(s^{z(c)}t^{p^{0}(c)}-1\bigr)
				\bigl(s^{z(c)}t^{p^{1}(c)}-1\bigr) \\
				+\; & \sum_{c \notin ZC(K)} 
				sgn(c)\bigl(s\,t^{p^{0}(c)}-1\bigr)
				\bigl(s\,t^{p^{1}(c)}-1\bigr),
			\end{align*}
			where
			\[
			p^{l}(c) =
			\begin{cases}
				1, & \text{if the number of bars on the $l$-component 
					of $K_{c}$ is odd},\\
				0, & \text{if the number of bars on the $l$-component 
					of $K_{c}$ is even}.
			\end{cases}
			\]
			 for $l \in \{0,1\}$.
	\end{definition}

	\begin{remark}
		The $Q^{z}$-polynomial  coincides with the $Q$-polynomial \cite{Naoko2} in the following cases.
		\begin{enumerate}
			\item If $ZC(K) = \emptyset$, i.e., every crossing of $K$ has odd affine index, then every crossing $c \in C(K)$ has odd affine index on both smoothing components. So the exponent of $s$ in the definition of $Q_{K}(s,t)$ equals $1$ for every crossing. This is exactly the exponent used in the definition of $Q^{z}_{K}(s,t)$ for crossings not in $ZC(K)$.Since $ZC(K)=\emptyset$, every crossing of $K$ falls into this case, and we get
				\begin{align*}
				Q^{z}_{K}(s,t) 
				=\,& \sum_{c \in C(K)} 
				sgn(c)\bigl(s\,t^{p^{0}(c)}-1\bigr)
				\bigl(s\,\,t^{p^{1}(c)}-1\bigr)
				=\, Q_{K}(s,t).
			\end{align*}
				\item If $|ZC(K)| = 1$, say $ZC(K) = \{c_0\}$, then there is no other zero crossings in any component of $K_{c_0}$ and therefore
			$ind^{\,l}(c_{0},K)= zind^{\,l}(c_{0},K)=0$ for $ l\in \{0,1\}$. Then, the term corresponding to $c_0$ in both $Q_{K}(s,t)$ and $Q_{K}^{z}(s,t)$ is	$sgn(c_{0})\bigl(t^{p^{0}(c_{0})}-1\bigr)
			\bigl(\,t^{p^{1}(c_{0})}-1\bigr)$.
			\item If $ZC(K) = C(K)$, i.e., every classical crossing of $K$ is a 
			zero crossing. Then \[zind^{\,l}(c,K)= ind^{\,l}(c,K),\, \text{for}\, l\in \{0,1\}.\] So, the contribution from each crossing in $C(K)$ is same in both $Q_{K}(s,t)$ and $Q^{z}_{K}(s,t)$.
		
		\end{enumerate}
	\end{remark}
\begin{example}\label{ex:ex1}
	Let $K$ be a twisted knot diagram with $C(K)=\{c_{1}, c_{2}, c_{3}, c_{4}\}$ and the diagrams $K_{c_{i}},\,i=1,2,3,4$ as shown in Fig.~\ref{fig:ex1},\ref{fig:ex1sm}.
	Here, $$ind^{\,0}(c_1,K) = -sgn(\gamma_{1})-sgn(\gamma_{2}) +sgn(\gamma_{3})+sgn(\gamma_{4})= 0.$$
	 Similarly, we compute all values of $ind^{\,0}(c_i,K),i=1,2,3,4$ (Table~\ref{tab:ex1tab}). Therefore, the $Q$-polynomial of $K$ is
	 \[Q_{K}(s,t)= (st-1)^{2}+ (s-1)^{2}.\]
	Now, $ZC(K)=\{c_1, c_4\}$ and 
	\begin{align*}
	zind^{\,0}(c_1,K)=& sgn(c_4) =+1,\\
	zind^{\,0}(c_4,K)=& -sgn(c_1)=-1.
	\end{align*}

	 Hence,  \[Q^{z}_{K}(s,t)= (st-1)^{2}+ 3(s-1)^{2}.\]

	\begin{center}
		\begin{minipage}[c]{0.4\textwidth}
			\centering
			\includegraphics[width=0.47\textwidth]{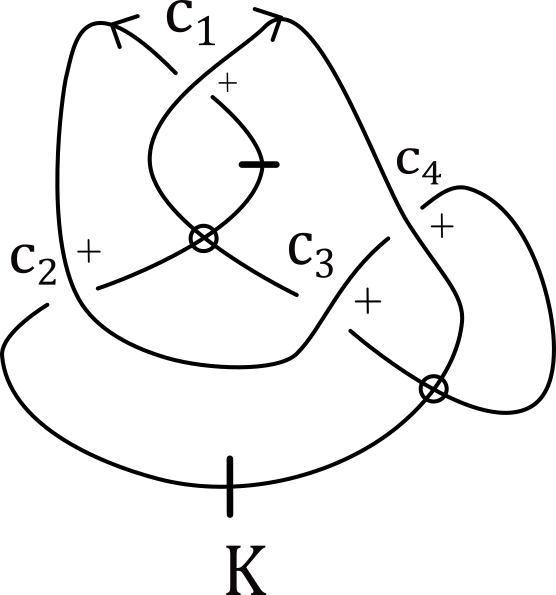}
			\captionof{figure}{}
			\label{fig:ex1}
		\end{minipage}
		\quad
		\begin{minipage}[c]{0.55\textwidth}
			\centering
			\small
			\begin{tabular}{|c|c|c|c|c|}
				\hline
				$c$ & $sgn(c)$ & $ind^{\,0}(c,K)$ & $p^{0}(c)$ & $p^{1}(c)$ \\
				\hline
				$c_{1}$ & $+1$ & $0$  & $0$ & $0$ \\
				\hline
				$c_{2}$ & $+1$ & $1$  & $1$ & $1$ \\
				\hline
				$c_{3}$ & $+1$ & $-1$ & $0$ & $0$ \\
				\hline
				$c_{4}$ & $+1$ & $0$  & $0$ & $0$ \\
				\hline
			\end{tabular}
			\captionof{table}{}
			\label{tab:ex1tab}
		\end{minipage}
	
	\end{center}
	\begin{figure}[h]
			\centering
			\includegraphics[width=0.9\textwidth]{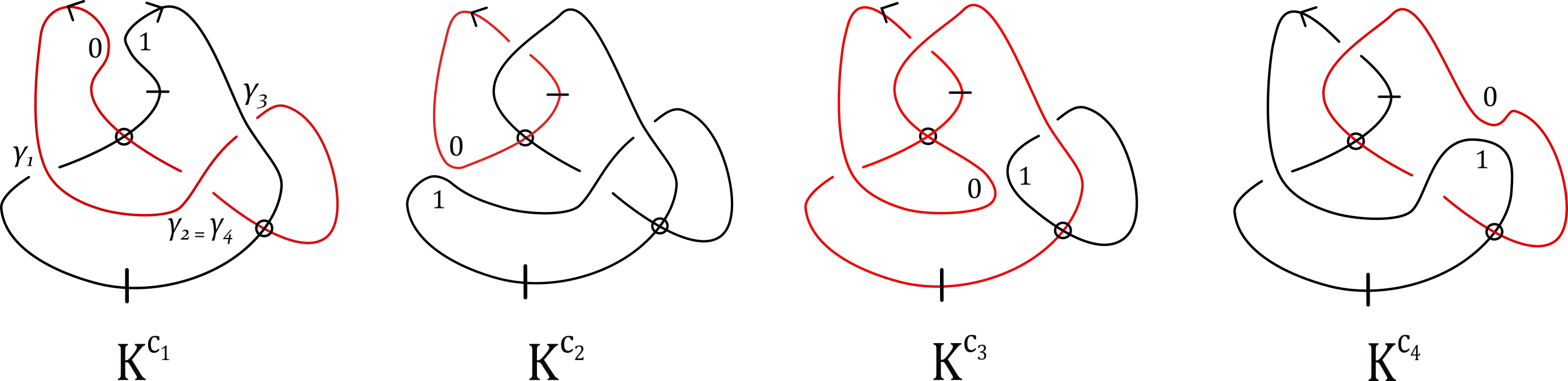}
			\caption{}
			\label{fig:ex1sm}
		\end{figure}	
\end{example}		
	
	\section{Invariance of \texorpdfstring{$Q^{z}_{K}(s,t)$}{Qz\_K(s,t)}}\label{sec:invariance}
To show that $Q^{z}_{K}(s,t)$ is a twisted knot invariant, we show that it is invariant under all oriented classical, virtual and twisted Reidemeister moves. Depending on the orientation of the strands involved in the move, there are several versions of each move. Therefore, first we will determine a generating set for the oriented Reidemeister moves. The following theorem already provides a generating set of Reidemeister moves for the oriented virtual knots. 
\begin{theorem}[\cite{Dan}]
	Let $V$ and $V'$ be two oriented virtual knot diagrams in $\mathbb{R}^2$. Then the diagram $V$ can be transformed into $V'$ by isotopy, and a finite sequence of oriented Reidemeister moves $C1a,C1b, C2a, C3a, V1a, V2a, V3a$ and $V4g$ (See Fig.~\ref{fig:Cl_gen},\ref{fig:vir_genset}).
\end{theorem}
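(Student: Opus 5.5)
The plan follows the standard strategy of Polyak for classical diagrams, extended to the virtual setting. The starting point is that every oriented version of each unoriented generalized Reidemeister move $C_1,C_2,C_3,V_1,V_2,V_3,V_4$ must be realized by isotopy of $\mathbb{R}^2$ and the moves $C1a, C1b, C2a, C3a, V1a, V2a, V3a, V4g$. Since any two diagrams of the same virtual knot are related by a finite sequence of unoriented moves, and an orientation is carried along each move, it suffices to realize every oriented version of each move from the proposed set.

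\textbf{Classical part.} For the purely classical moves we invoke Polyak's theorem \cite{Polyak}. It shows that $C1a, C1b, C2a, C3a$ generate all oriented $C_1, C_2, C_3$ moves, and the derivations use only planar isotopy and these moves. Hence they hold verbatim for virtual diagrams. We recall the key steps:
\begin{enumerate}
\item The remaining $C_1$ versions come from a $C2a$ combined with $C1a$ or $C1b$ (the Whitney-trick argument).
\item The oppositely oriented $C_2$ moves come from $C1$ moves, $C2a$ and $C3a$.
\item The other seven $C_3$ versions come from $C3a$ conjugated by suitable $C_2$ moves.
\end{enumerate}

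\textbf{Virtual part.} The virtual moves $V_1,V_2,V_3$ involve only virtual crossings, so crossing information is symmetric. All orientations of $V_1$ are then equivalent by planar isotopy (rotating or reflecting the kink), giving $V1a$. For $V_2$, the opposite-orientation version is obtained from $V2a$ together with $V1$ moves, exactly mirroring the classical argument but without the sign obstruction. The $V_3$ versions similarly reduce to $V3a$ via $V_2$ moves, in the same way as $C3a$ generates the $C_3$ moves.

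\textbf{Mixed move $V_4$.} This is the main step and the expected obstacle. The move involves one classical crossing and two virtual crossings, so its oriented versions are distinguished by the classical sign and the orientation of the virtual strand. The plan is to conjugate $V4g$ by virtual $V_2$ moves: introduce a pair of virtual crossings on either side, apply $V4g$, then remove the pair. This reverses the relative orientation of the virtual strand. Changing which strand is over at the classical crossing, or its sign, would be handled by rotating the picture in the plane, plus $V_1$ kinks where a strand's direction must be reversed. I expect that some versions will need a detour through a $C_2$ or $V_3$ move as well. The required bookkeeping is a case-by-case enumeration of the oriented $V_4$ variants, checking that each lies in the orbit of $V4g$ under these operations. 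Ali \cite{Dan} carries out exactly this verification diagrammatically.
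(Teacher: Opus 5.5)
\paragraph{Main problems.} The paper gives no proof of this statement: it is quoted from Ali \cite{Dan}. Your proposal also hands the decisive $V4$ verification back to \cite{Dan}, so the only part you actually argue is the virtual part, and there a key step fails.

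\paragraph{The $V1$ step.} The oriented $V1$ moves are not all related by planar isotopy. An isotopy of $\mathbb{R}^2$ is orientation preserving, so reflecting the kink is not allowed. Moreover, the turning direction of an oriented virtual curl (clockwise versus counterclockwise) is invariant under isotopy. The two curls therefore give genuinely distinct moves $V1a$ and $V1b$, and only $V1a$ is in the generating set. This is why the paper has to invoke Ali's lemmas to obtain $V1b$ from $V1a$, $V2a$, $V3a$ in the proof of Lemma~\ref{lem:t2b}.

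What is missing is a Whitney-trick derivation of the opposite curl. One possible form:
\begin{enumerate}
\item Obtain an anti-parallel $V2$ move beforehand, using only $V2a$, $V3a$ and $V1a$-curls.
\item Fold the strand by that move; this produces two nested curls.
\item Choose the fold so that the inner curl is a $V1a$-curl, and delete it; what remains is a $V1b$-curl.
\end{enumerate}
Your derivations of the remaining $V2$ and $V3$ versions must then be ordered so that no step uses $V1b$ before it is available. Note also that these derivations need $V3a$, not just $V1$ moves.

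\paragraph{The $V4$ step.} Rotating the picture never changes the sign of the classical crossing. So it cannot reach the $V4$ versions whose crossing sign is opposite to that of $V4g$. In fact no combination of $V1a$, $V2a$, $V3a$, $V4g$ and isotopy can do this locally. Take a disk whose only classical crossing $c$ has the opposite sign. Then $V4g$ can never be applied at $c$. The remaining moves and isotopies change the virtual strand only by homotopies avoiding $c$. Hence the winding number of that strand around $c$ (after closing it along the boundary of the disk) is preserved, while the desired $V4$ move changes it by $\pm 1$.

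So classical moves must enter the derivation. This is consistent with the paper's own use of $C1a$, $C1b$, $C2a$ when it appeals to \cite{Dan} and \cite{Polyak} for $V4c$, $V4e$ in the proof of Lemma~\ref{lem:t3d}. Your plan of conjugating $V4g$ by $V2$ moves and rotating cannot account for half of the $V4$ versions. Without this idea and the actual case enumeration, the proposal does not establish the statement beyond what the citation already provides.
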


\begin{figure}[htbp]
	\centering
	\includegraphics[width=0.95\textwidth]{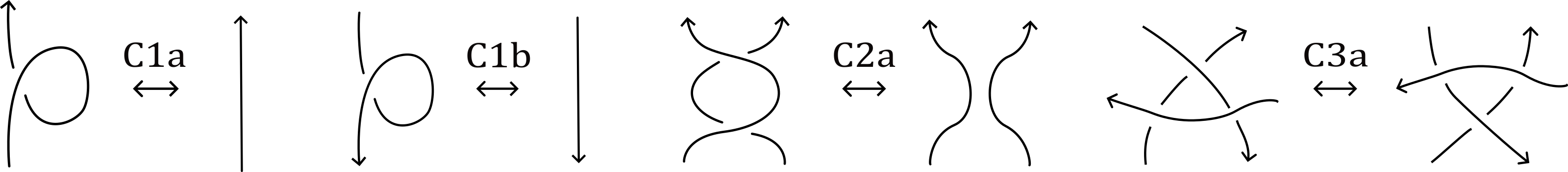}
	\caption{Generating set for classical Reidemeister moves.}
	\label{fig:Cl_gen}
\end{figure}
\begin{figure}[htbp]
	\centering
	\includegraphics[width=\textwidth, height=0.12\textwidth]{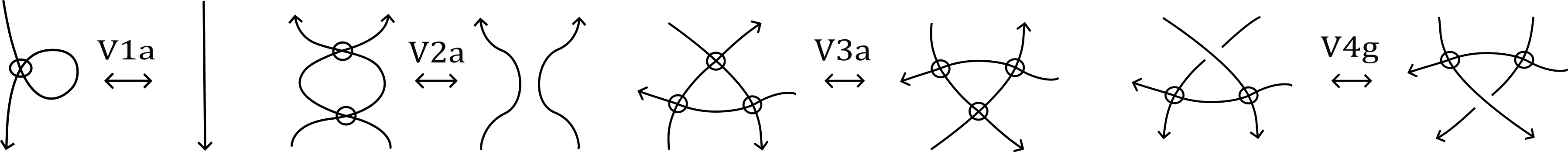}
	\caption{Virtual Reidemeister moves in the generating set.}
	\label{fig:vir_genset}
\end{figure}
\begin{figure}[htbp]

	\centering
	\includegraphics[width=0.9\textwidth]{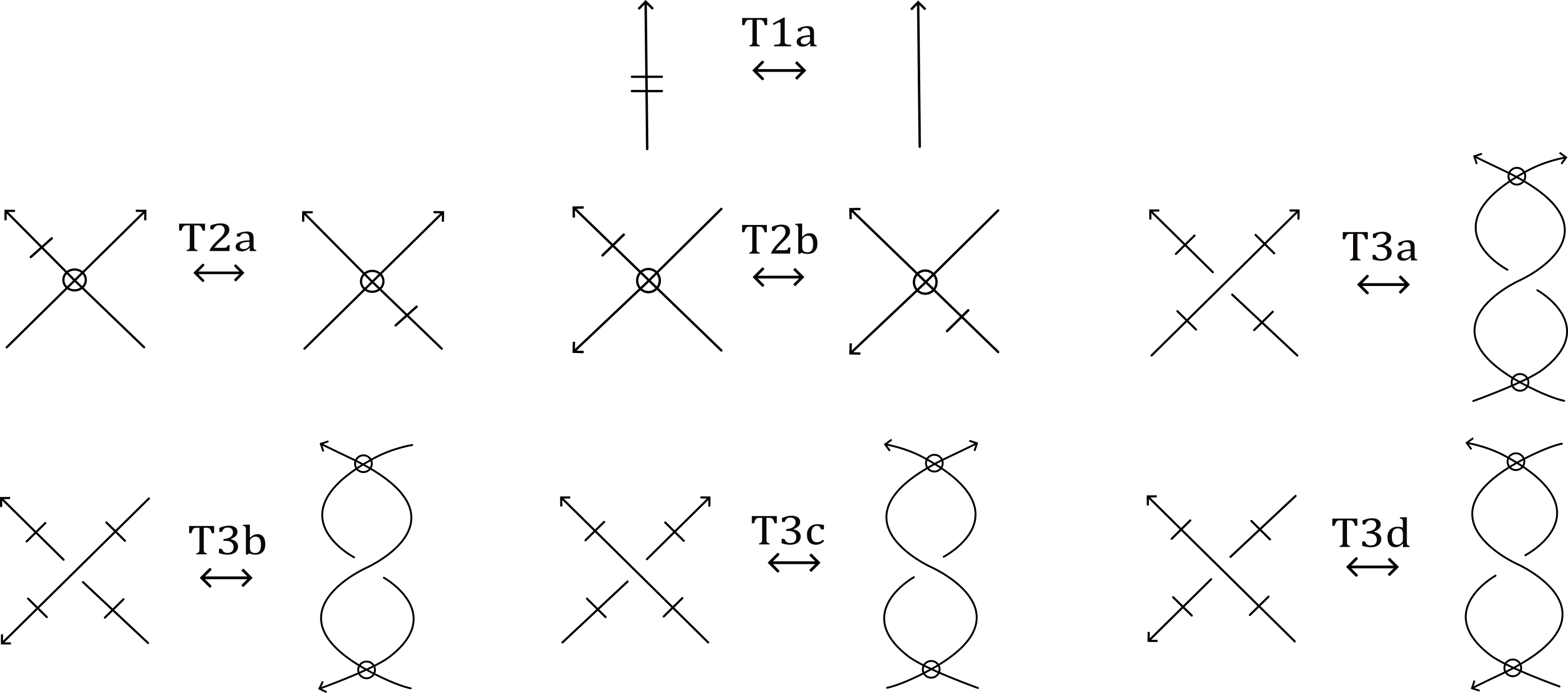}
	\caption{All oriented twisted moves for twisted knot diagrams.}
	\label{fig:tw_genset}.
\end{figure}
\subsection{A generating set for Reidemeister moves of oriented twisted knot diagrams:}
Let $S$ be the set of all generalized Reidemeister moves for oriented twisted knot diagrams. We claim that $$G=\{C1a,C1b, C2a, C3a, V1a, V2a, V3a, V4g, T1a, T2a, T3a, T3c\}$$ is a generating set for $S$. To prove this it is enough to show that the twisted oriented moves in $S$ (Fig.~\ref{fig:tw_genset}) can be realized by a finite sequence of the moves in $G$.

\begin{lemma}\label{lem:t2b}
	The $T2b$ move can be realized by a sequence of the moves $V1a$, $V2a$, $V3a$, and $T2a$.
\end{lemma}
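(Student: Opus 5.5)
The plan is to follow the standard orientation-reversal trick used by Polyak for classical moves and by Ali \cite{Dan} for virtual ones. The two oriented versions $T2a$ and $T2b$ of the move that pushes a bar across a virtual crossing (a single bar on one side of the virtual crossing exchanged for bars on both strands on the other side, as in Fig.~\ref{fig:tw_genset}) differ only in the relative orientation of the two strands. The idea is to temporarily turn one strand around using purely virtual moves, apply $T2a$, and then undo the auxiliary virtual structure. Since bars carry no orientation data, only the virtual crossings created along the way have to be controlled.

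\textbf{Step 1: set up the auxiliary curl.} Start from the left-hand side of $T2b$. On the strand whose orientation disagrees with the $T2a$ picture, choose a segment that contains no bar. Apply $V1a$ to create a small virtual kink there. Then apply $V2a$ to slide this kink across the other strand, producing a pair of cancelling virtual crossings. The goal is that the other strand now meets a subarc of the modified strand running in the direction required by $T2a$, near the original virtual crossing $v$.

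\textbf{Step 2: move the bar past the right virtual crossing.} Apply $V3a$ to reposition the virtual crossings so that $v$ is exchanged with a virtual crossing between the other strand and the correctly oriented subarc. The local picture around the bar is then exactly the left-hand side of $T2a$. Apply $T2a$ to pass the bar through this crossing, producing bars on both strands.

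\textbf{Step 3: undo the auxiliary structure.} Reverse the $V3a$, $V2a$ and $V1a$ moves. This should be possible because a bar sitting on an arc does not obstruct $V2a$ or $V3a$, as long as the bar stays off the arcs being slid; if necessary, first slide the bar along its arc by planar isotopy. The result is the right-hand side of $T2b$. I would record the whole argument as a short sequence of pictures.

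\textbf{Main obstacle.} The main risk is in Steps 1 and 3: every intermediate virtual move must be in exactly the oriented form $V1a$, $V2a$, $V3a$, rather than some other oriented version of $V2$ or $V3$. I would first try putting the kink on the strand that carries the bar. If that forces a non-generating version of $V2$ or $V3$, I would instead kink the other strand, or mirror the kink's rotation direction; by Ali's theorem \cite{Dan}, one of these choices can be arranged using only the listed virtual moves.
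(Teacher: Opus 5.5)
\textbf{There is a genuine gap: Step 3 cannot work as stated.}

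Your plan has the same overall shape as the paper's proof. The paper exhibits the sequence $V1b, V2a, V2b, V3b, T2a$ in Fig.~\ref{fig:genT2b}, then converts $V1b, V2b, V3b$ into $V1a, V2a, V3a$ using Lemmas 2.1, 2.2, 2.5 of \cite{Dan}. Your fallback to Ali's results is that last step, and the orientation bookkeeping you single out as the main obstacle is the easy part.

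The failure in Step 3 is forced. Moves performed away from the bar never carry a crossing past it. So when $T2a$ moves the bar past an auxiliary crossing $y$ on the barred strand $A$, the bar ends up strictly between $y$ and its partner from the $V2$ move that created it, and it stays there. That $V2$ can therefore never be reversed. Nor can the bar be slid out of the way ``by planar isotopy'': moving a bar past a virtual crossing is itself a $T2$ move, possibly of type $b$, which is circular.

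Separately, your description of $T2$ is wrong. $T2$ does not put a bar on the second strand; it slides a bar along its own strand past a virtual crossing. A move that changed the number of bars on a strand by one would change the bar parity of a component. That parity is invariant, and it is what separates the two trivial twisted knots.

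The missing idea concerns direction, and which crossing gets cancelled. Along $A$, count the virtual crossings lying ahead of the bar: $+1$ when the other strand crosses $A$ as in $T2a$, and $-1$ when it crosses as in $T2b$. A self-crossing of $A$ is counted on each branch, with opposite signs.
\begin{itemize}
\item Virtual moves away from the bar do not change this count.
\item $T2a$ lowers it by $1$ when the bar moves forward and raises it by $1$ when the bar moves backward.
\item $T2b$ takes it from $-1$ to $0$.
\end{itemize}
So ``turn the other strand around, then push the bar through with $T2a$'' has the sign backwards. The bar must pass a $T2a$-type crossing \emph{against} the orientation of $A$. That crossing must then be cancelled against the original crossing $v$, not against its partner.

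Concretely, with $B$ the other strand:
\begin{enumerate}
\item By a $V2$, push a finger of $B$ across the part of $A$ before the bar. This creates $x$ (crossed as in $T2b$) and $y$ (crossed as in $T2a$), in the order $x, y$, bar, $v$ along $A$.
\item Slide the bar back past $y$ by $T2a$.
\item Now $y$ and $v$ bound a bar-free bigon, and they cancel by a $V2$.
\item After a planar isotopy, $x$ plays the role of $v$, with the bar beyond it.
\end{enumerate}
If the finger is taken from the part of $B$ before $v$, both $V2$ moves are same-direction. If it is taken from the part after $v$, both are opposite-direction. So one of the two choices uses only $V2a$ and $T2a$.
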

\begin{proof}
In Fig.~\ref{fig:genT2b}, we see that $T2b$ can be obtained by the sequence of the moves $V1b, V2a, V2b, V3b$ and $T2a$. By Lemmas~$2.1, 2.2$, and $2.5$ in \cite{Dan}, $V1b, V2b$,and $V3b$ can be realized by a sequence of the moves $V1a, V2a, V3a$. Hence, the proof.
\end{proof}

\begin{figure}[htbp]
	\centering
	\includegraphics[width=0.85\textwidth]{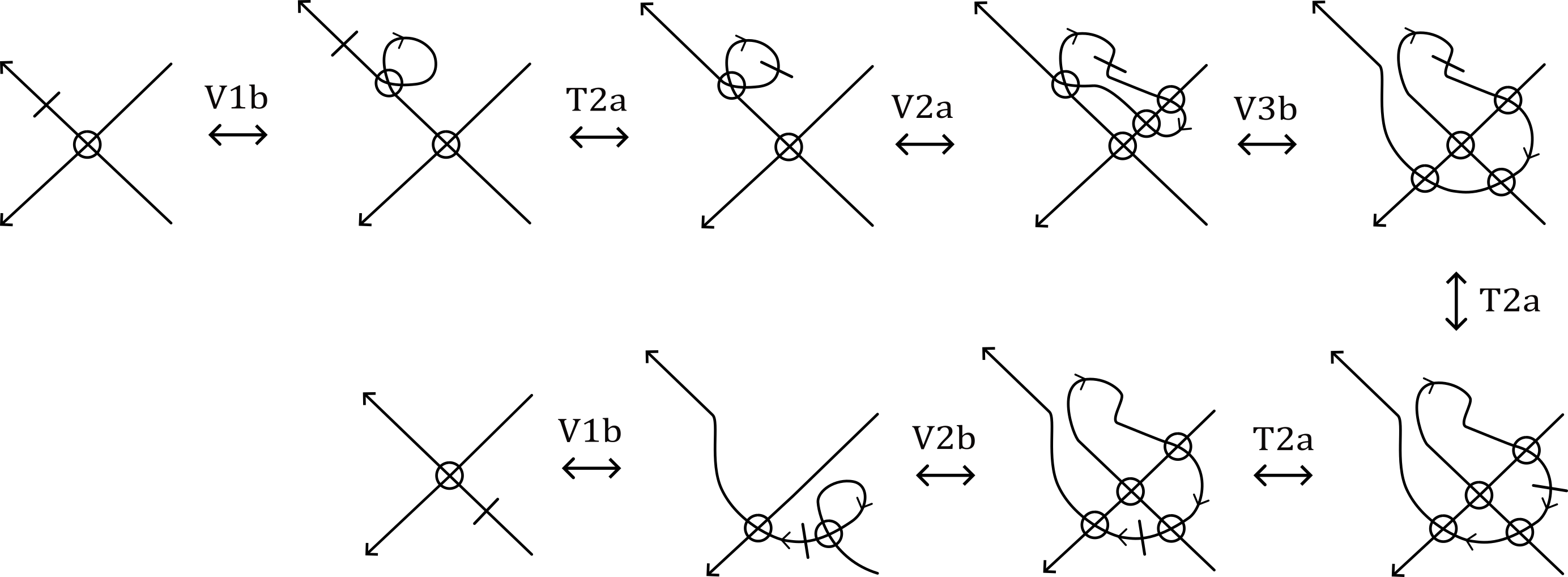}
	\caption{The $T2b$ move is realized by a sequence of $V1b$, $V2a$, $V2b$, $V3b$, $T2a$ moves.}
	\label{fig:genT2b}
\end{figure}
\begin{lemma}\label{lem:t3b}
	The $T3b$ move can be realized by a sequence of the moves $V1a$, $V2a$, $V3a$, $V4g$ and $T3c$.
\end{lemma}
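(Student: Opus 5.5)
The plan is to follow the pattern of Lemma~\ref{lem:t2b}. We build an explicit diagrammatic sequence that turns the left side of $T3b$ into its right side, using only virtual moves and one application of $T3c$. Afterwards we reduce every auxiliary virtual move to the generators $V1a, V2a, V3a, V4g$, using Lemmas~2.1, 2.2, 2.5 and the corresponding results on $V4$ variants in \cite{Dan}.

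The $T3$ move passes a bar from one side of a virtual crossing to the other, placing bars on both outgoing strands. The oriented versions $T3b$ and $T3c$ differ only in the relative orientation of the two strands at the virtual crossing. The idea is to change the orientation type of the virtual crossing locally. We do this by adding a virtual curl ($V1$) on one strand, which reverses that strand's direction as it enters the crossing region. Then $V2$ and $V3$ moves slide the bar-carrying strand so that the configuration near the virtual crossing matches the left side of $T3c$.

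The steps, in order, are as follows.
\begin{enumerate}
\item Starting from the left side of $T3b$, introduce a virtual kink by a $V1$ move on the strand whose orientation must be reversed.
\item Use $V2$ and $V3$ moves (and, if a bar must pass a virtual crossing of the kink, a $V4$ move, which is why $V4g$ appears in the hypothesis) to rearrange the picture so that a $T3c$ configuration appears.
\item Apply $T3c$.
\item Undo the auxiliary virtual structure by the inverse $V3$, $V2$ and $V1$ moves, now with the two bars on the outgoing strands; this yields the right side of $T3b$.
\end{enumerate}
The virtual moves arising in this sequence will be oriented variants such as $V1b, V2b, V2c, V3b$ and some $V4$ variant. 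Each of these is then expressed via $V1a, V2a, V3a, V4g$ by the lemmas of \cite{Dan}.

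The main obstacle is step 2. The auxiliary virtual crossings created by the kink must be placed so that the move applied is exactly $T3c$ with correct orientations. Any bar that has to cross a virtual strand during the rearrangement must do so via a $V4$ move of a type reducible to $V4g$. I would first try placing the kink on the strand carrying no bar, so that only pure virtual moves are needed before $T3c$. Only if that fails to produce the correct orientation pattern would I put the kink on the bar strand and invoke $V4g$ to move the bar across.
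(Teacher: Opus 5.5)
Your overall strategy is the same as the paper's: build auxiliary virtual structure, apply $T3c$ once, then reduce the oriented virtual variants to $V1a, V2a, V3a, V4g$ using Ali's lemmas. The paper's Fig.~\ref{fig:genT3b} uses exactly $V1b, V2a, V2b, V3b, V4a, V4g$ and $T3c$.

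However, the proof of this lemma \emph{is} that explicit sequence, and you leave it undone. Your step~2, which you yourself flag as the main obstacle, is the whole content. Moreover, your plan for step~2 rests on a wrong picture of $T3$. $T3$ is not a move that pushes bars past a virtual crossing. It involves a \emph{classical} crossing $c$: the bars on both strands on one side of $c$ are pushed to both strands on the other side. The crossing $c$ becomes a crossing $c'$ with $sgn(c')=sgn(c)$ but with over/under exchanged along each strand. This is how the move is used in the proof of Lemma~\ref{lem:t3inv}. So what separates $T3b$ from $T3c$ is the orientation pattern of the two strands at a classical crossing.

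This misreading produces concrete errors in step~2.
\begin{enumerate}
\item Moving a bar across a virtual crossing is the twisted move $T2$, not a $V4$ move, and $T2$ is not among the moves permitted in this lemma. Your sequence must therefore be arranged so that no bar ever crosses a strand of the auxiliary virtual curl; the bars may move only in the single $T3c$ step.
\item The real reason $V4$ moves are needed is different. The virtual curl that changes the relative orientation of the two strands has to be slid across the classical crossing of the $T3$ configuration, and a virtual strand can pass a classical crossing only by $V4$ moves. This is why the paper's sequence contains $V4a$ and $V4g$.
\item Your fallback of putting the kink on ``the strand carrying no bar'' is not available, since in $T3$ both strands carry a bar.
\end{enumerate}

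To turn the plan into a proof, you must draw the sequence. At the moment $T3c$ is applied, the classical crossing, its over/under data, the strand orientations and the bar positions must match $T3c$ exactly. After undoing the auxiliary structure, you must recover the right side of $T3b$. Only then does the reduction via Ali's lemmas finish the argument.
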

\begin{proof}
In Fig.~\ref{fig:genT3b}, we see that $T3b$ can be obtained by the sequence of moves $V1b, V2a, V2b$, $V3b, V4a, V4g$ and $T3c$. By Lemmas~$2.1, 2.2,2.3, 2.5$, and $2.6$ in \cite{Dan}, $V1b, V2b$, $V3b$, and $V4a$  can be realized by a sequence of the moves $V1a, V2a, V3a$. Hence, the proof.
\end{proof}
	\begin{figure}[htbp]
	\centering
	\includegraphics[width=0.9\textwidth]{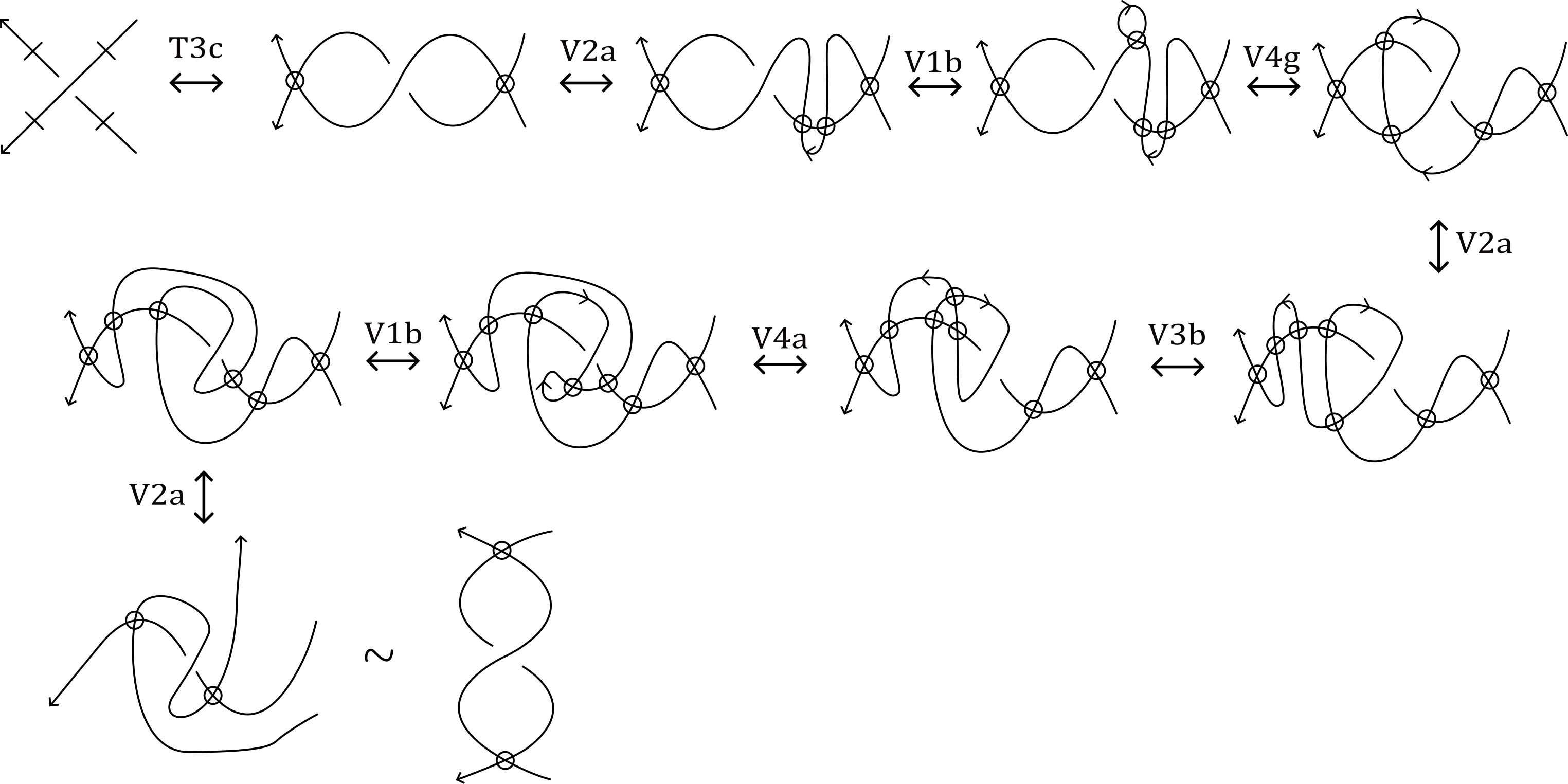}
	\caption{The $T3b$ move is realized by a sequence of $V1b$, $V2a$, $V2b$, $V3b$, $V4a$, $V4g$ and  $T3c$ moves.}
	\label{fig:genT3b}
\end{figure}
\begin{lemma}\label{lem:t3d}
	The $T3d$ move can be realized by a sequence of the moves $C1a$, $C1b$, $C2a$, $V1b$, $V2a$, $V2b$, $V3b$, and  $T3a$.
\end{lemma}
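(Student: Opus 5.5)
The plan is the same diagrammatic realization used in Lemmas~\ref{lem:t2b} and \ref{lem:t3b}: exhibit an explicit sequence of diagrams turning the left side of $T3d$ into its right side, with only one twisted move, namely $T3a$. Recall that $T3$ moves a classical crossing past a pair of bars (one bar on each strand), replacing the crossing with its ``twisted'' version (over/under exchanged relative to the orientation) and inserting virtual crossings. The oriented variants $T3a$ and $T3d$ differ in the relative orientation of the two strands and in whether the crossing is positive or negative. So the crossing types cannot be matched using only virtual moves; this is why the statement lists the classical moves $C1a$, $C1b$, $C2a$.

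The key step is to rewrite the $T3d$ crossing so that it appears as part of a $T3a$ configuration. I would do this with a Whitney-trick style maneuver, as in Polyak's derivation of the oriented $\Omega 3$ variants. Concretely:
\begin{enumerate}
\item Start from the left-hand tangle of $T3d$ and apply $C1a$ or $C1b$ to one strand to create a kink of suitable sign.
\item Apply $C2a$ between the kinked strand and the other strand. Near the bars, the local picture now contains a classical crossing whose sign and orientation match the crossing in $T3a$.
\item Perform $T3a$ on that crossing, pushing it through the pair of bars. The move contributes its virtual crossings.
\item Tidy up with $V2a$, $V2b$ and $V3b$ (to slide virtual crossings through), and $V1b$ (to remove virtual kinks).
\item Undo the auxiliary classical crossings by the inverse $C2a$ and the inverse $C1a$/$C1b$, arriving at the right-hand tangle of $T3d$.
\end{enumerate}
The figure would record each intermediate diagram.

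After the sequence is drawn, I would note that $V1b$, $V2b$ and $V3b$ can in turn be replaced by $V1a$, $V2a$, $V3a$ via Lemmas~2.1, 2.2 and 2.5 of \cite{Dan}. This shows $T3d$ is generated by $G$. That reduction is not needed for the statement itself, which explicitly allows $V1b$, $V2b$, $V3b$.

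The main obstacle is choosing the auxiliary classical crossings so that, after $T3a$ is applied, the remaining classical crossings really cancel by a $C2a$ (with the correct orientations) rather than by an unoriented $C2$ variant outside the list. I would first try a single kink plus a single $C2a$ on the strand whose orientation differs between $T3a$ and $T3d$, tracking signs at each step. If the leftover pair were non-cancelling, I would switch to putting the kink on the other strand, using $C1b$ instead of $C1a$.
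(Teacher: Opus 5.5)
\textbf{There is a genuine gap.} The lemma is an existence statement whose whole content is an explicit sequence of diagrams, and that sequence is exactly what you postpone (``I would first try \dots; if the leftover pair were non-cancelling, I would switch \dots'').

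The manoeuvre you sketch also cannot be carried out with the moves you allow yourself.
\begin{itemize}
\item In Polyak's derivations the auxiliary kink works only because a third Reidemeister move lets the other strand slide across the kink crossing. You have no $C3a$: it is neither in your steps nor in the list. So a strand that enters the kink loop by $C2a$ can only leave it again by an inverse $C2a$ before the kink can be removed.
\item A crossing on the loop is separated from the bar of the kinked strand by the kink crossing, so $T3a$ never applies to it.
\item Your steps 3 and 5 conflict. Suppose the crossing you push through the bars is one of the two created by $C2a$. It then ends up on the other side of the bars, flanked by virtual crossings, and can no longer be cancelled against its partner by an inverse $C2a$. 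Meanwhile nothing in your steps moves or cancels the original crossing of $T3d$, which must end up on the other side of the bars.
\item Suppose instead that it is the original crossing. Kinks and bigons placed elsewhere change neither its sign nor which of its arcs carry the bars, so it is still in $T3d$ position, not $T3a$ position.
\item $V1b$, $V2a$, $V2b$, $V3b$ never move a virtual crossing across a classical one. So your tidying step cannot rearrange, around the classical crossing, the virtual crossings that $T3a$ produces.
\end{itemize}

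\textbf{The missing ingredient is the mixed moves, not classical ones.} In the paper (Fig.~\ref{fig:genT3d}), $T3d$ is obtained from $V1b$, $V2a$, $V3b$, $V4c$, $V4e$ and a single $T3a$. Virtual curls and the mixed moves $V4c$, $V4e$ reposition the crossing relative to the bars so that $T3a$ applies. No classical Reidemeister move occurs in that sequence.

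The moves $C1a$, $C1b$, $C2a$ in the statement enter only afterwards. They come through the cited results of \cite{Dan} and \cite{Polyak}, which reduce $V1b$, $V3b$, $V4c$, $V4e$ to the generating set, and that reduction also uses $V4g$. So the move list in the statement should not be read as excluding mixed moves. Trying to avoid them is what pushed you toward a classical Whitney trick.

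This also corrects your starting premise. The paper's sequence contains a single classical crossing throughout, and the virtual moves, the $V4$ moves and $T3a$ all preserve its sign. Hence $T3a$ and $T3d$ involve crossings of the same sign. What must be adjusted is only how the crossing and its strands sit relative to the bars in the plane, which is a job for virtual curls and $V4$ moves.
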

\begin{proof}
	In Fig.~\ref{fig:genT3d}, we see that $T3d$ can be obtained by the sequence of the moves $V1b$, $V2a$, $V3b$, $V4c$, $V4e$ and $T3a$. By results in \cite{Dan} and \cite{Polyak}, we know that $V1b, V3b$, $V4c$, and $V4e$  can be realized by a sequence of the moves $C1a$, $C1b$, $C2a$, $V1a$, $V2a$, $V3a$, and $V4g$. Hence, the proof.
	\begin{figure}[htbp]
		\centering
		\includegraphics[width=0.9\textwidth]{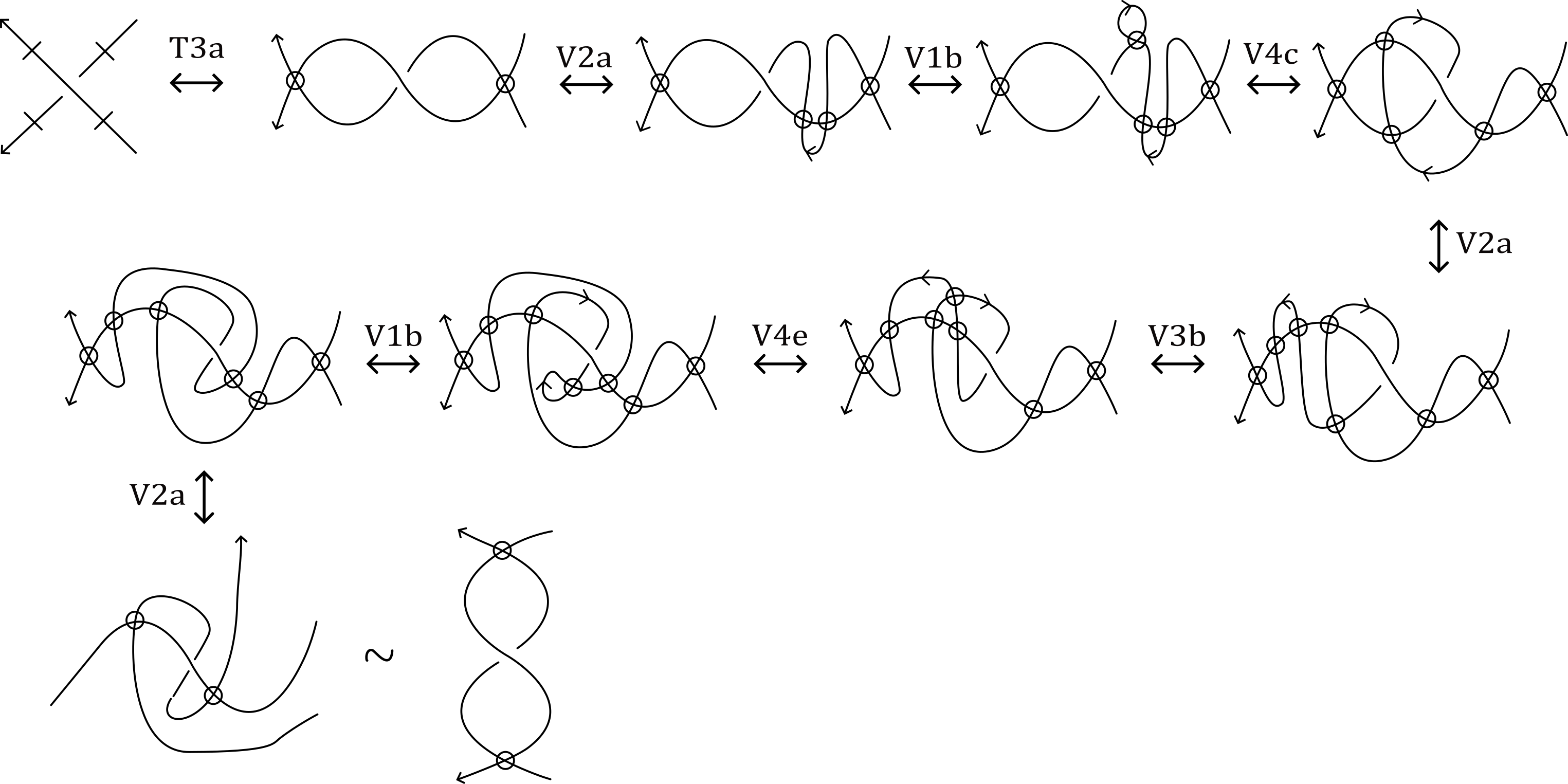}
			\caption{The $T3d$ move is realized by a sequence of $V1b$, $V2a$, $V3b$, $V4c$, $V4e$, and $T3a$ moves.}
		\label{fig:genT3d}
	\end{figure}
\end{proof}
By Lemmas~\ref{lem:t2b}, \ref{lem:t3b}, \ref{lem:t3d}, we can conclude the following.
\begin{theorem}\label{th:gentw}
	Let $K$ and $K'$ be two oriented twisted knot diagrams in $\mathbb{R}^2$. Then the diagram $K$ can be transformed into $K'$ by isotopy, and a finite sequence of oriented Reidemeister moves $C1a$, $C1b$, $C2a$, $C3a$, $V1a$, $V2a$, $V3a$, $V4g$ and $T1a$, $T2a$, $T3a$, $T3c$.
\end{theorem}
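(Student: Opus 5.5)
The proof is a reduction: since $K$ and $K'$ are equivalent twisted knot diagrams, by definition they are related by isotopy of $\mathbb{R}^2$ and a finite sequence of moves from $S$. Here $S$ means all oriented versions of the generalized moves $C_1,C_2,C_3,V_1,\dots,V_4,T_1,T_2,T_3$. So it suffices to show that every oriented move in $S$ can be realized by isotopy and a finite sequence of moves in $G=\{C1a,C1b,C2a,C3a,V1a,V2a,V3a,V4g,T1a,T2a,T3a,T3c\}$. Substituting each occurrence of a non-generating move by its realizing sequence then gives a finite sequence of moves from $G$ transforming $K$ into $K'$.

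The moves in $S$ split into three families.

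\emph{Classical and virtual moves.} Every oriented version of $C_1$, $C_2$, $C_3$, $V_1$, $V_2$, $V_3$ and $V_4$ is a move on an oriented virtual diagram with no bars. By the theorem of Ali \cite{Dan} recalled above, together with Polyak's result \cite{Polyak} for the classical moves, each is realized by isotopy and moves among $C1a,C1b,C2a,C3a,V1a,V2a,V3a,V4g$. These realizations are local, so they stay valid inside a twisted diagram: the bars elsewhere are untouched.

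\emph{Twisted moves $T_1$ and $T_2$.} Using the list of all oriented twisted moves in Fig.~\ref{fig:tw_genset}, we check that every oriented version is either in $G$ or handled by an earlier lemma.
\begin{itemize}
\item[(i)] $T_1$ involves a single strand with a virtual curl and a bar. Reversing the orientation of a bar does not change it, so I expect the oriented versions of $T_1$ to coincide with $T1a$ up to planar isotopy, or to follow from $T1a$ together with $V1a$ and the virtual moves.
\item[(ii)] $T_2$ (cancelling two adjacent bars) has only the versions $T2a$ and $T2b$, and $T2b$ is Lemma~\ref{lem:t2b}.
\end{itemize}

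\emph{Twisted move $T_3$.} Its oriented versions are indexed by the relative orientations of the two strands, so up to isotopy there are $T3a,T3b,T3c,T3d$.
\begin{itemize}
\item[(i)] $T3a$ and $T3c$ lie in $G$.
\item[(ii)] $T3b$ follows from Lemma~\ref{lem:t3b}.
\item[(iii)] $T3d$ follows from Lemma~\ref{lem:t3d}. Note that that lemma, as stated, uses $V1b$, $V2b$ and $V3b$ (and, in its proof, $V4c$ and $V4e$). These are reduced to $G$ by the classical and virtual step, so the dependencies are non-circular.
\end{itemize}

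The main obstacle is completeness of the enumeration: making sure Fig.~\ref{fig:tw_genset} really lists every oriented twisted move up to planar isotopy, and that no version of $T_1$ or $T_3$ (for instance, one with the roles of the two strands swapped, or a rotated picture) is missed. I would handle this by parametrizing each move by the orientation choices on its strands, and quotienting by rotations and reflections of the disc that preserve the move's unoriented picture. Each remaining class then either appears in $G$ or is covered by Lemmas~\ref{lem:t2b}, \ref{lem:t3b} and \ref{lem:t3d}.
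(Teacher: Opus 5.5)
Your proposal is correct and follows the paper's argument. Ali's theorem (together with Polyak's) covers every classical and virtual version, and Lemmas~\ref{lem:t2b}, \ref{lem:t3b} and \ref{lem:t3d} cover $T2b$, $T3b$, $T3d$. Exactly as in the paper, you take the completeness of the list in Fig.~\ref{fig:tw_genset} from the figure. One small slip: your description of $T_2$ as cancelling two bars does not match the paper's move, since its version $T2b$ needs virtual moves in Lemma~\ref{lem:t2b}; nothing in your argument depends on that description. If you do carry out your completeness check, quotient only by rotations and not by reflections, because a reflection is not an isotopy of $\mathbb{R}^2$. For instance, a reflection preserving the unoriented $V_1$ picture exchanges the clockwise and counterclockwise virtual curls, and no rotation relates these two.
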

Now, we prove the invariance of the $Q^z$-polynomial for the moves in the generating set.

\begin{figure}[htbp]
	\centering
	\includegraphics[width=0.8\textwidth]{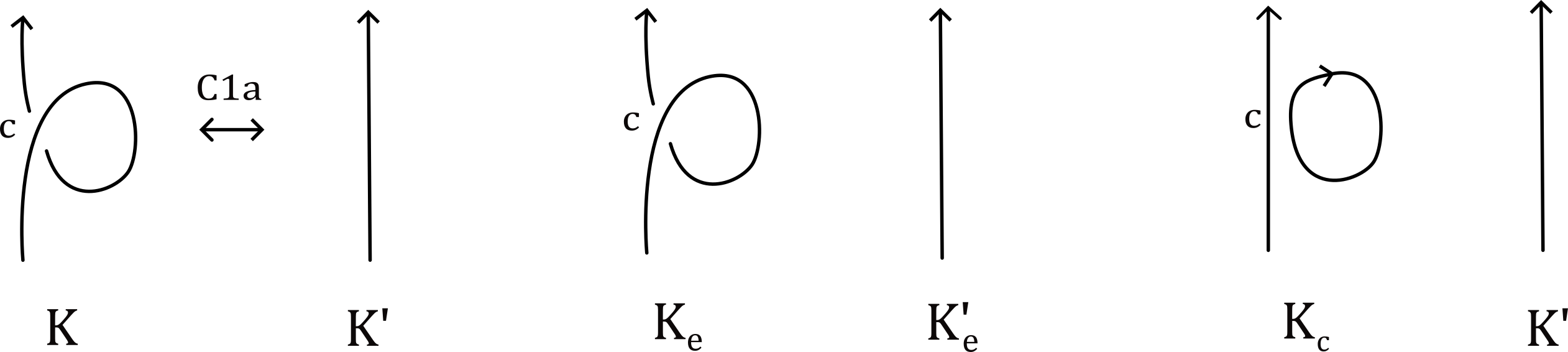}
	\caption{The twisted knot diagrams $K$ and $K'$ related by the oriented Reidemeister move $C1a$ and their smoothing components.}
	\label{fig:C1a}
\end{figure}
\begin{lemma}\label{lem:c1inv}
	$Q_{K}^{z}(s,t)$ is invariant under the $C1a$ move and the
	 $C1b$ move.
\end{lemma}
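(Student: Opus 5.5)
Let $K'$ be obtained from $K$ by a $C1a$ (or $C1b$) move that creates a new classical crossing $c_0$. Since the move is local, we identify $C(K')=C(K)\cup\{c_0\}$. I will show two things. First, the term of $Q^z_{K'}$ coming from $c_0$ vanishes. Second, every other crossing $c$ contributes the same term to $Q^z_K$ and to $Q^z_{K'}$.

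\emph{Step 1: the new crossing.} Smoothing at $c_0$ splits $K'_{c_0}$ into a small loop, say the $0$-component, and a component isotopic to $K$. The small loop meets no classical crossings, so
\[
ind^{\,0}(c_0,K')=0=zind^{\,0}(c_0,K').
\]
Hence $c_0\in ZC(K')$, and by the Remark we have $z(c_0)=z^0(c_0)=0$. The loop carries no bars, so $p^0(c_0)=0$. The factor $\bigl(s^{z(c_0)}t^{p^0(c_0)}-1\bigr)$ is therefore $s^0t^0-1=0$, and the whole term of $c_0$ vanishes, whatever the sign of $c_0$ and whatever $p^1(c_0)$ is.

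\emph{Step 2: the old crossings.} Fix $c\in C(K)$. The kink lies entirely on one component of $K'_c$, and passing along that component we meet $c_0$ twice, once as an overcrossing and once as an undercrossing. These two visits contribute $+sgn(c_0)$ and $-sgn(c_0)$, which cancel. It follows that $ind^{\,l}(c,K')=ind^{\,l}(c,K)$ for $l=0,1$. The other component does not pass through $c_0$ at all, so its index is also unchanged. In particular $ZC(K')=ZC(K)\cup\{c_0\}$. The sign of $c$ and the bar parities $p^l(c)$ are unchanged, since no bars are added.

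For the zero affine index the same cancellation applies. Because $c_0\in ZC(K')$, its two visits do enter the sum defining $zind^{\,l}(c,K')$, but they enter with opposite signs and cancel. Thus $zind^{\,l}(c,K')=zind^{\,l}(c,K)$ for every $c\in ZC(K)$. Every term of the two sums in the definition of $Q^z$ is therefore preserved, and $Q^z_{K'}=Q^z_K$.

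The main obstacle is Step 2. Because $c_0$ is itself a zero crossing, it could in principle change $zind$ at other crossings; the argument is that it only ever appears as a pair of opposite visits on a single component. To be careful I would check this against Fig.~\ref{fig:C1a} for both $C1a$ and $C1b$, since they differ in the sign of $c_0$ and in which side the loop lies on. The argument above is independent of both.
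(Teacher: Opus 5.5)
Your proposal is correct and follows essentially the same route as the paper. The kink crossing's term vanishes because its lobe component carries no crossings and no bars, so $z=0$ and one $p^{l}=0$. Every other crossing keeps its affine index, zero-crossing status, zero affine index and bar parities, because the kink crossing is a self-crossing of a single smoothing component, whose over and under visits cancel.
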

\begin{proof}
	Suppose $K$ and $K'$ are related by $C1a$ move as in Fig.~\ref{fig:C1a}, where we assume $C(K)= C(K')\cup \{c\}$. Note that $z(c)=0$  since every crossing is a self-crossing in the $0$-component of $K_{c}$ and
	$p^{l}(c) =0$ for at least one $l \in \{0,1\}$. Now, for all crossings $e \in
	C(K)\setminus \{c\}= C(K')$, we see that
	\[ind ^{\,l}(e,K)= ind^{\,l}(e,K'),\; l \in \{0,1\}.\]
	Therefore, $e \in ZC(K)$ if and only if $e \in ZC(K')$.
	Moreover, Since $c$ is a self-crossing, for all $e(\neq c) \in ZC(K)$, $z(e)$ do not change..
	Also, for $l \in \{0,1\}$, $p^{\,l}(e)$ do not change. Therefore,
	\[Q^{z}_{K}(s,t) - Q^{z}_{K'}(s,t)= sgn(c)(s^{z(c)}t^{p^{0}(c)}-1)(s^{z(c)}t^{p^{1}(c)}-1)=0.\]
	Hence, $Q^{z}_{K}(s,t)$ is invariant under  $C1a$ move. Similarly, we can show that it is invariant under  $C1b$ move.		
\end{proof}

\begin{lemma}\label{lem:c2inv}
	$Q^{z}_{K}(s,t)$ is invariant under the $C2a$ move.
\end{lemma}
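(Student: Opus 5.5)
Let $K$ and $K'$ be related by a $C2a$ move, with $C(K)=C(K')\cup\{c_1,c_2\}$, where $c_1,c_2$ are the two new crossings. In $C2a$ the two strands are oriented in the same direction, so $sgn(c_1)=-sgn(c_2)$. The strategy is to show two things. First, the crossings $e\in C(K')$ carry identical data in $K$ and $K'$. Second, the terms of $c_1$ and $c_2$ in $Q^z_K$ cancel each other.

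\textbf{Step 1: the old crossings $e\in C(K')$.} Fix such an $e$ and an index $l\in\{0,1\}$. There are two cases for how the $l$-component of $K_e$ meets the bigon.
\begin{itemize}
\item It passes through both strands of the bigon. Then it meets $c_1$ and $c_2$ once each. Both encounters are of the same over/under type, because one strand is over at both $c_1$ and $c_2$.
\item It contains only one strand, or neither. Then it meets each of $c_1,c_2$ twice (once over, once under), or not at all.
\end{itemize}
In every case the contributions $(-1)^{\epsilon}sign$ of $c_1$ and $c_2$ cancel, since their signs are opposite. Hence $ind^{\,l}(e,K)=ind^{\,l}(e,K')$, so $e\in ZC(K)$ if and only if $e\in ZC(K')$. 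Bars are untouched by the move, so $p^l(e)$ is unchanged.

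For $z(e)$ we also need $c_1$ and $c_2$ to be simultaneously zero or simultaneously non-zero crossings of $K$. Then they either both enter $zind^{\,l}(e,K)$ with cancelling contributions, or both are omitted.

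\textbf{Step 2: comparing $c_1$ and $c_2$.} Smooth at $c_1$ and at $c_2$. The smoothing components of $K_{c_1}$ and $K_{c_2}$ correspond naturally: each component runs along the same arcs of the diagram, differing only inside the bigon, and so carries the same bars. Thus $p^l(c_1)=p^l(c_2)$ under this matching.

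For the affine indices, one component at $c_1$ contains the crossing $c_2$ exactly once, and vice versa; all other crossings are encountered identically. This gives
\[ind^{\,l}(c_1,K)=ind^{\,l}(c_2,K)\pm\bigl(\text{contribution of } c_2 \text{ minus contribution of } c_1\bigr).\]
The differing terms are $\pm1$ each. A direct check from Figure~\ref{fig:smoothing} should show the two terms cancel, or at worst differ by an even amount. So the parities agree, and $c_1\in ZC(K)$ if and only if $c_2\in ZC(K)$, which is what Step 1 needed.

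If both are zero crossings, I would repeat the same bookkeeping for $zind$: each of $c_1,c_2$ appears in the other's list and is a zero crossing, giving $z(c_1)=z(c_2)$. The $c_1$ and $c_2$ terms of $Q^z_K$ then have equal factors and opposite signs, so they cancel, yielding $Q^z_K(s,t)=Q^z_{K'}(s,t)$.

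\textbf{Main obstacle.} The hard part is the local over/under and sign check in Step 2: for the $l$-component at $c_1$ passing through $c_2$, the contribution $(-1)^{\epsilon}sign(c_2)$ must have the parity needed for $z(c_1)$ and $z(c_2)$ to coincide. Since only parity mod 2 matters, a discrepancy of exactly $2$ is harmless. I would first verify that the difference is in $\{0,\pm2\}$, by drawing the $C2a$ bigon and reading off $\epsilon$ at $c_2$ on each smoothing component.
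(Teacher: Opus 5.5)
Your proposal is correct and follows the same route as the paper. The crossings of $K'$ keep their affine indices, zero-status, $z$-values and bar parities. Meanwhile $c_1,c_2$ share zero-status, $z$-value and matching bar parities, so their terms cancel because $sgn(c_1)=-sgn(c_2)$.

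One slip needs fixing: in Step 1 your two cases are interchanged. If the $l$-component of $K_e$ contains \emph{both} bigon strands, then $c_1,c_2$ are self-crossings of it, each met once over and once under, and they contribute nothing. If it contains \emph{exactly one} strand, it meets $c_1$ and $c_2$ once each with the same over/under type, and the opposite signs cancel. This is precisely the paper's case split, and your conclusion stands.

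The ``main obstacle'' in Step 2 is not an obstacle. Each differing term is $\pm1$, so the difference is automatically even, which is all that the zero-status and $z$ require (and all the paper uses). In fact the difference is zero. In each matched pair of components, one picks up $c_2$ as an overcrossing (resp.\ undercrossing) and its partner picks up $c_1$ as an undercrossing (resp.\ overcrossing). The contributions are $-sgn(c_2)$ and $sgn(c_1)$ (resp.\ $sgn(c_2)$ and $-sgn(c_1)$), which are equal. So matched indices, and likewise zero affine indices, agree exactly.

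Finally, your geometric matching may interchange the labels $0$ and $1$. You should note that each crossing's term in $Q^{z}$ is symmetric in $p^{0},p^{1}$ and that $z^{0}=z^{1}$, which makes the label bookkeeping harmless.
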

\begin{proof}
	Suppose $K$ and $K'$ are related by the $C2a$ move as in Fig.~\ref{fig:C2a}, where $C(K)= C(K') \cup \{c_{1},c_{2}\}$ and $sgn(c_{1})= -sgn(c_{2})$.
	
	\begin{figure}[htbp]
		\centering
		\includegraphics[width=0.3\textwidth]{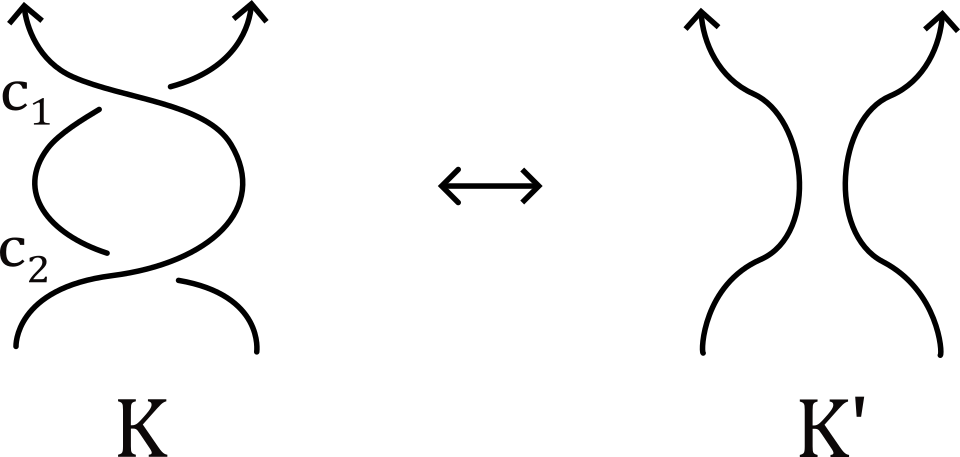}
		\caption{The twisted knot diagrams $K$ and $K'$ related by the oriented Reidemeister move $C2a$.}
		\label{fig:C2a}
	\end{figure}

	For the crossings $c_{1}, c_{2} \in C(K)$, $K_{c_{1}}$ and $K'_{c_{2}}$ are similar except locally as shown in Fig.~\ref{fig:sm1}.  Now, for $l \in \{0,1\}$,
	\begin{equation}
		|ind^{\,l}(c_{1},K)- ind^{\,l}(c_{2},K')| =|-sgn(c_{2})+sgn(c_{1})|=2.
	\end{equation}
	Therefore, $c_{1} \in ZC(K)$ if and only if $c_{2} \in  ZC(K)$. 
		\begin{figure}[htbp]
		\centering
		\subfloat[\label{fig:sm1}]{\includegraphics[width=0.28\textwidth]{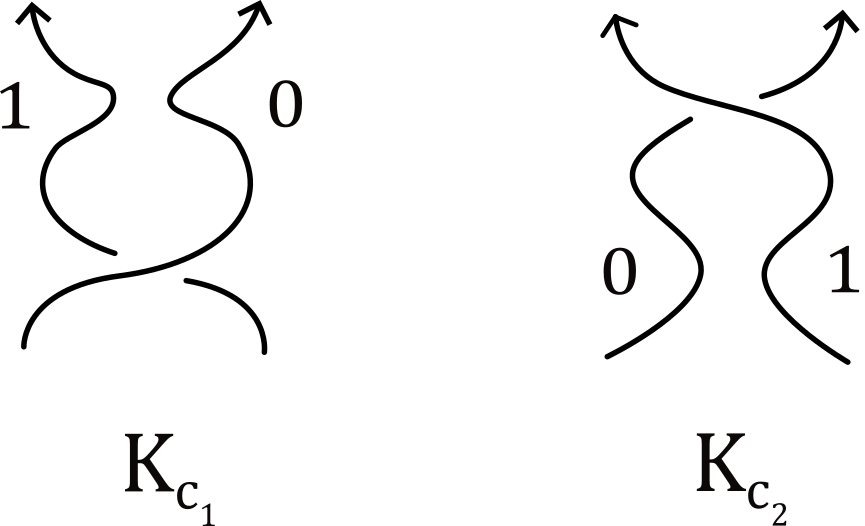}}
		\qquad
		\subfloat[\label{fig:sm2}]{\includegraphics[width=0.33\textwidth]{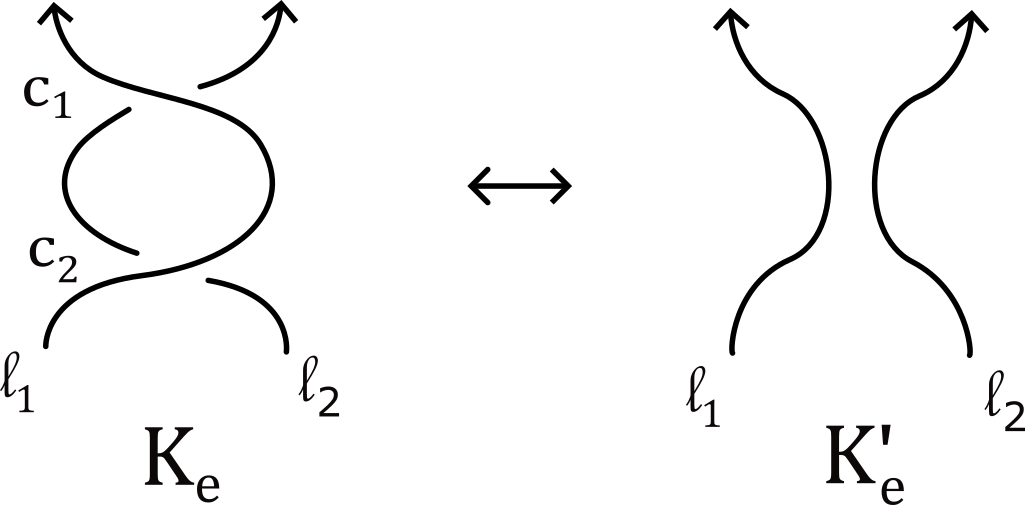}}
		\caption{Smoothing components of the diagram which are related by $C2a$ move.}
		\label{fig:sm}
	\end{figure}

	For all the crossings $e \in C(K) \setminus \{c_{1}, c_{2}\} = C(K')$,  there are two cases. First case is when the strands involved in the move belong to the same component,  and other case when they belong to different components (Fig.~\ref{fig:sm2}). Then either $c_{1}, c_{2}$ are self-crossings or the signs of $c_{1}$ and $c_{2}$ cancels each other. Therefore, in both cases, for every crossing $e \in C(K)\setminus \{c_{1}, c_{2}\}=C(K')$,
	\begin{align*}
		ind ^{\,l}(e,K) &= ind ^{\,l}(e,K') 
		\quad \text{for}\quad l \in \{0,1\},
	\end{align*}
	which implies $e \in ZC(K)$ if and only if $e \in  ZC(K')$.
	Therefore, $ z^{l}(e)$ and $p^{\,l}(e)$, $(l \in \{0,1\})$ do not change when $e \in ZC(K)$. 
	
	Note that 
	\begin{align*}
	|zind^{\,l}(c_{1},K)- zind^{\,l}(c_{2},K)|&= |sgn(c_{2})+sgn(c_{1})|= 0,\, \text{for}\; l \in \{0,1\},
	\end{align*}
	which implies $z^{l}(c_{1})=z^{l}(c_{2}), (l \in \{0,1\})$. Furthermore, $p^{\,l}(c_{1}) = p^{\,l}(c_{2}), \, l \in \{0,1\}$.
	Since $sgn(c_1)=-sgn(c_2)$, we see that the contribution of $c_1$ cancels with that of $c_2$ and we have $Q^{z}_{K}(s,t)=
	Q^{z}_{K'}(s,t)$.
	Hence, $Q^{z}_{K}(s,t)$ is invariant under  $C2a$ move. 
\end{proof}

The following lemmas will be used for the case of Reidemeister move $C3a$. Let denote the crossings involved in $C3a$ as shown in Fig.~\ref{fig:3crossings}.
\begin{figure}[htbp]
	\centering
	\includegraphics[width=0.45\textwidth]{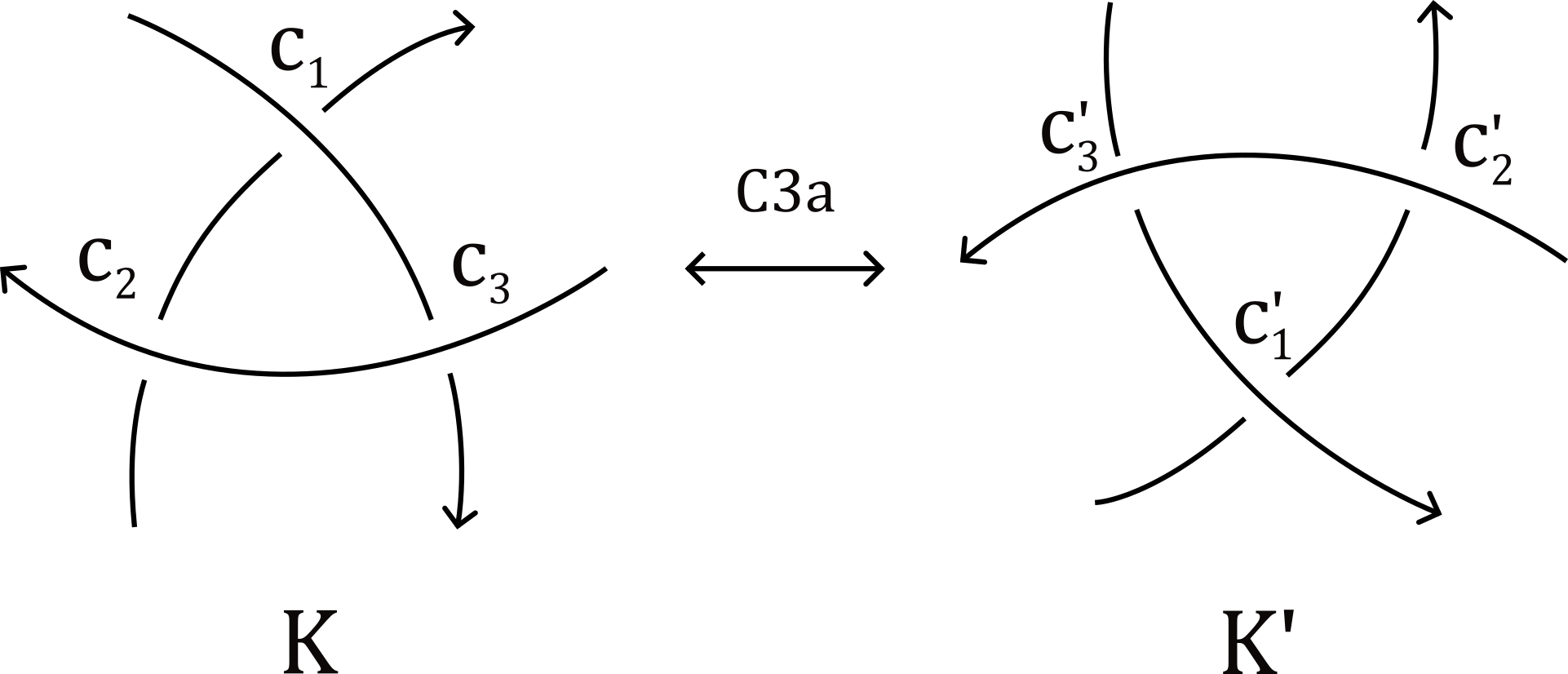}
	\caption{The twisted knot diagrams $K$ and $K'$ related by the Reidemeister move $C3a$.}
	\label{fig:3crossings}
\end{figure}

Now, we will show that if two crossings involved are
zero crossings then the other crossing is also a zero crossing on each side of the $C3a$ move .
	\begin{figure}[htbp]
	\centering
	\includegraphics[width=0.55\textwidth]{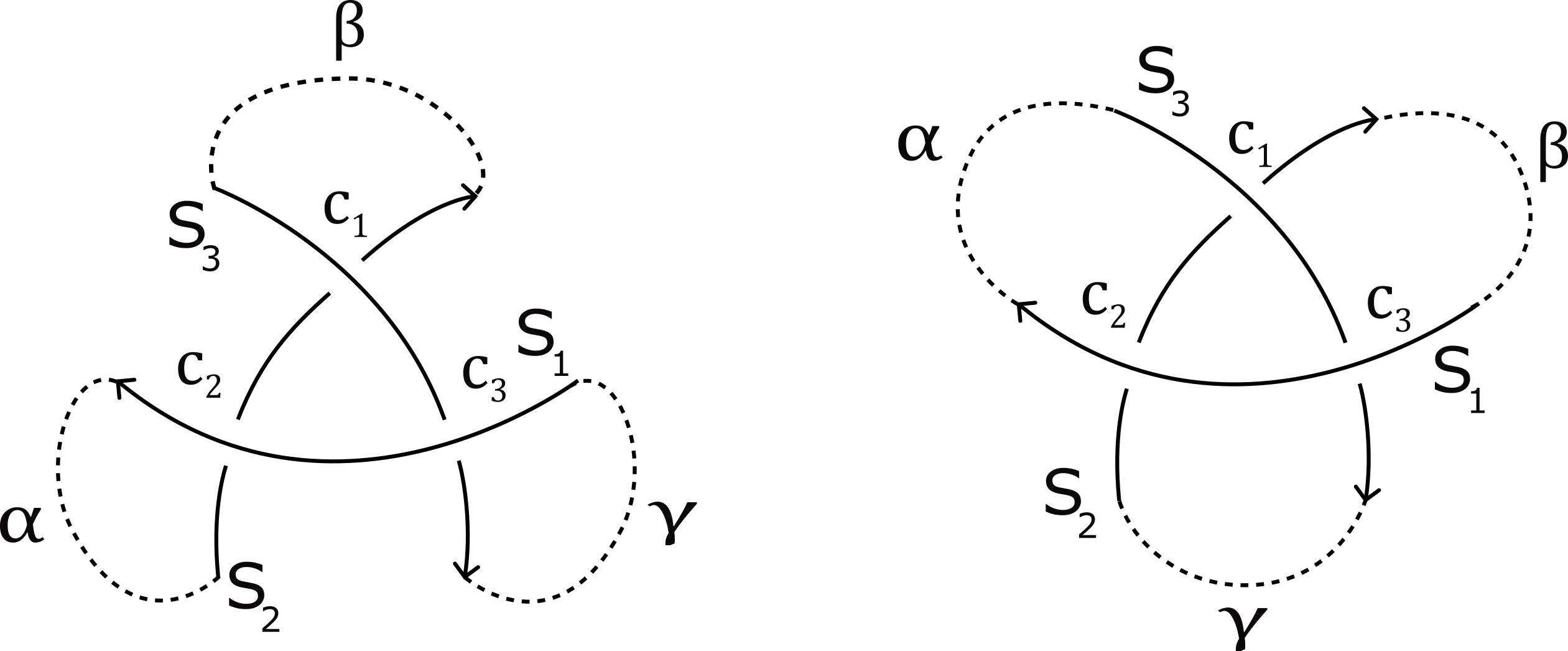}
	\caption{Sequences $(S_{1}S_{2}S_{3})$ and $(S_{1}S_{3} S_{2})$ that gives a twisted knot diagram.}
	\label{fig:3strands}
\end{figure}
\begin{lemma}\label{lem:r31}
	
	Let $\{i,j,k\}=\{1,2,3\}$. Consider the local diagram on the left-hand side of the $C3a$ move (Fig.~\ref{fig:3crossings}) for a twisted knot diagram $K$. 
	If $c_i,c_j \in ZC(K)$, then $c_k \in ZC(K)$.
\end{lemma}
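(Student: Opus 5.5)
The plan is to reduce the lemma to a parity statement about chords in the Gauss diagram of $K$, where the three crossings of the $C3a$ move are three chords with endpoints on three short arcs of the circle.

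\textbf{Step 1: reduce evenness of $ind^{\,l}$ to a counting problem.} Fix $c\in C(K)$ and smooth at $c$ to get components $0$ and $1$ of $K_c$. A crossing $\gamma\neq c$ is either a self-crossing of one component, or a mixed crossing between the two components.
\begin{itemize}
\item By the remark after the definition of $ind^{\,l}$, a self-crossing of the $l$-component appears twice in the sequence $\gamma_1,\dots,\gamma_{k_l}$, with opposite crossing types, so it contributes $0$ to $ind^{\,l}(c,K)$.
\item A mixed crossing appears exactly once on each component, so it contributes $\pm1$ to each of $ind^{\,0}(c,K)$ and $ind^{\,1}(c,K)$.
\end{itemize}
Hence, writing $n(c)$ for the number of mixed crossings,
\[ind^{\,0}(c,K)\equiv ind^{\,1}(c,K)\equiv n(c)\pmod 2.\]
Bars play no role here. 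In Gauss diagram language, $n(c)$ is the number of chords interlaced with the chord of $c$, because the two arcs of the circle cut out by that chord are exactly the two smoothing components. So $c\in ZC(K)$ if and only if $n(c)$ is even.

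\textbf{Step 2: place the three chords.} On the left-hand side of $C3a$ the three strands $S_1,S_2,S_3$ are short, pairwise disjoint arcs of the knot. Each $c_i$ joins two distinct strands, and each strand carries exactly two crossing endpoints. The strands occur along $K$ in one of the two cyclic orders of Fig.~\ref{fig:3strands}.

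\textbf{Step 3: show $n(c_1)+n(c_2)+n(c_3)\equiv 0\pmod 2$.} I count, for each chord $d$, how many of $c_1,c_2,c_3$ it interlaces.
\begin{itemize}
\item Let $d$ be a chord not among $c_1,c_2,c_3$. Its endpoints cut the circle into two arcs. Since each $S_j$ is a short arc containing no endpoint of $d$, each $S_j$ lies entirely on one side. The chord $c_i$ joining $S_a$ and $S_b$ interlaces $d$ exactly when $S_a$ and $S_b$ lie on opposite sides. Among three strands split into two sides, the number of pairs on opposite sides is $0$ (all on one side) or $2$ (a $1$--$2$ split). So $d$ contributes an even amount to the sum.
\item Interlacings among $c_1,c_2,c_3$ themselves are symmetric, so each interlaced pair is counted twice. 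This also contributes an even amount.
\end{itemize}
Therefore the total $n(c_1)+n(c_2)+n(c_3)$ is even.

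\textbf{Step 4: conclude.} If $c_i,c_j\in ZC(K)$, then $n(c_i)$ and $n(c_j)$ are even, so $n(c_k)$ is even by Step 3, and $c_k\in ZC(K)$ by Step 1.

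The main obstacle is Step 1, specifically justifying carefully that each mixed crossing is met exactly once on each smoothing component, even in the presence of bars and virtual crossings. This should follow because bars and virtual crossings are not in the sequence $\gamma_1,\dots,\gamma_{k_l}$, and only the over/under bookkeeping of the classical crossings matters.
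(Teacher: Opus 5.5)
Your argument is correct, and it takes a different route from the paper.

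\textbf{The paper's route.} It separates the two reconnection orders $(S_1S_2S_3)$ and $(S_1S_3S_2)$ of Fig.~\ref{fig:3strands} and names the outer arcs $\alpha,\beta,\gamma$. From the smoothing pictures it writes each $ind^{\,0}(c_i,K)$ as a combination of the over/under sums $l(\delta_1\delta_2)$ between outer arcs. It then checks the integer identity $ind^{\,0}(c_2,K)=ind^{\,0}(c_1,K)+ind^{\,0}(c_3,K)$ in each case, and the parity statement follows.

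\textbf{Your route.} You pass to parity at once and identify $ind^{\,l}(c,K)\bmod 2$ with the number of chords interlacing the chord of $c$ in the Gauss diagram. You then double count: each outside chord interlaces $0$ or $2$ of the three local chords, and each interlaced pair among $c_1,c_2,c_3$ is counted twice.

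\textbf{What your route gains.}
\begin{itemize}
\item It needs no case split on the reconnection order.
\item It uses nothing about the orientations or signs specific to $C3a$. So it proves Lemma~\ref{lem:r32} verbatim and applies to any oriented third Reidemeister move.
\item It automatically absorbs any contribution of $c_j,c_k$ to the index of $c_i$. The paper's displayed formulas, built only from crossings between outer arcs, do not record these terms.
\item It gives slightly more: the number of crossings among $c_1,c_2,c_3$ not in $ZC(K)$ is always even.
\end{itemize}
The paper's computation aims at exact integer values of the indices, but only their parity is ever used.

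The point you flag as the main obstacle in Step 1 is not a real difficulty. Bars and virtual crossings are not in the sequence $\gamma_1,\dots,\gamma_{k_l}$, and they do not affect which smoothing component a passage lies on.
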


\begin{proof}
	There are two possible ways to reconnect the three strands $S_{1}, S_{2}, S_{3}$ involved in the $C3a$ move to get a global diagram of $K$ (Fig.~\ref{fig:3strands}). The possible sequences are $ (S_{1}S_{2}S_{3})$ or $(S_{1}S_{3} S_{2})$. Let $\alpha,\beta,\gamma$ denote the three strands
	outside the local region.

	The diagrams after smoothing $K$ at  $c_{1}, c_{2}, c_{3}$ with $0$ and $1$-components are as shown in Fig.~\ref{fig:Kc123},~\ref{fig:Kc132}.
	\begin{figure}[htbp]
		\centering 
		\subfloat{\includegraphics[width=0.25\textwidth]{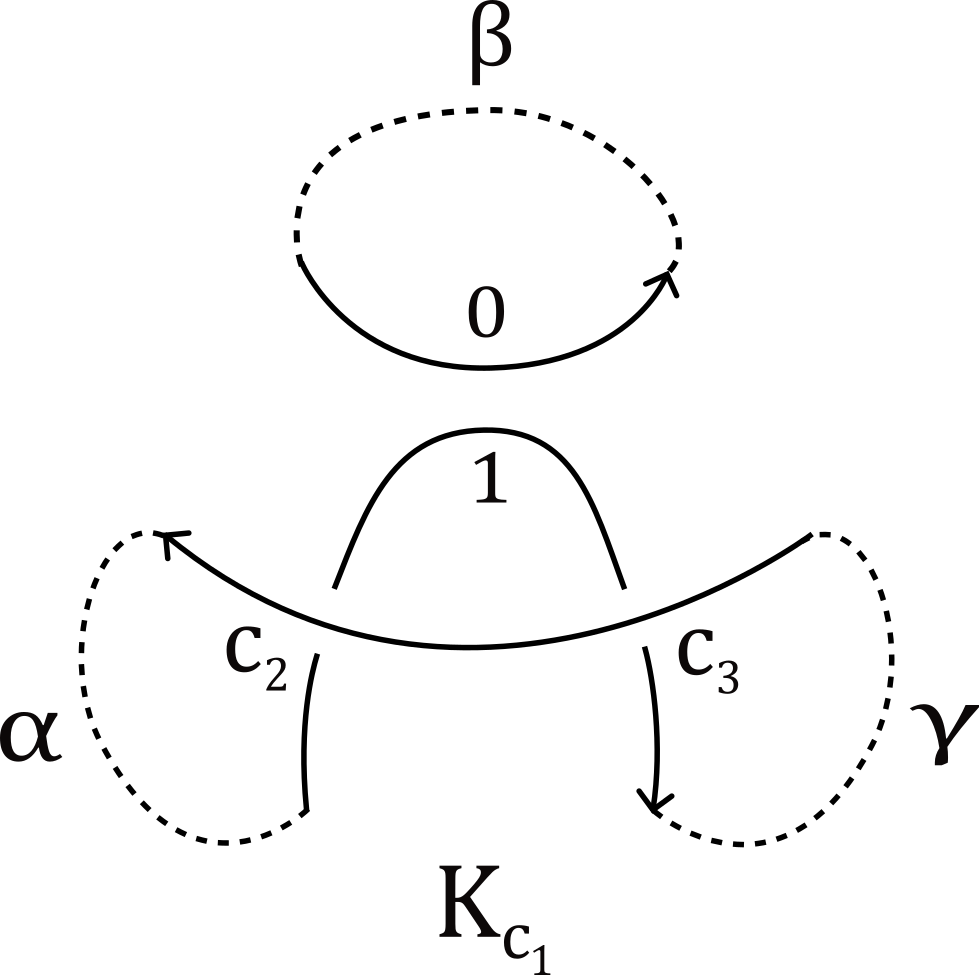}}
		\qquad 
		\subfloat{\includegraphics[width=0.25\textwidth]{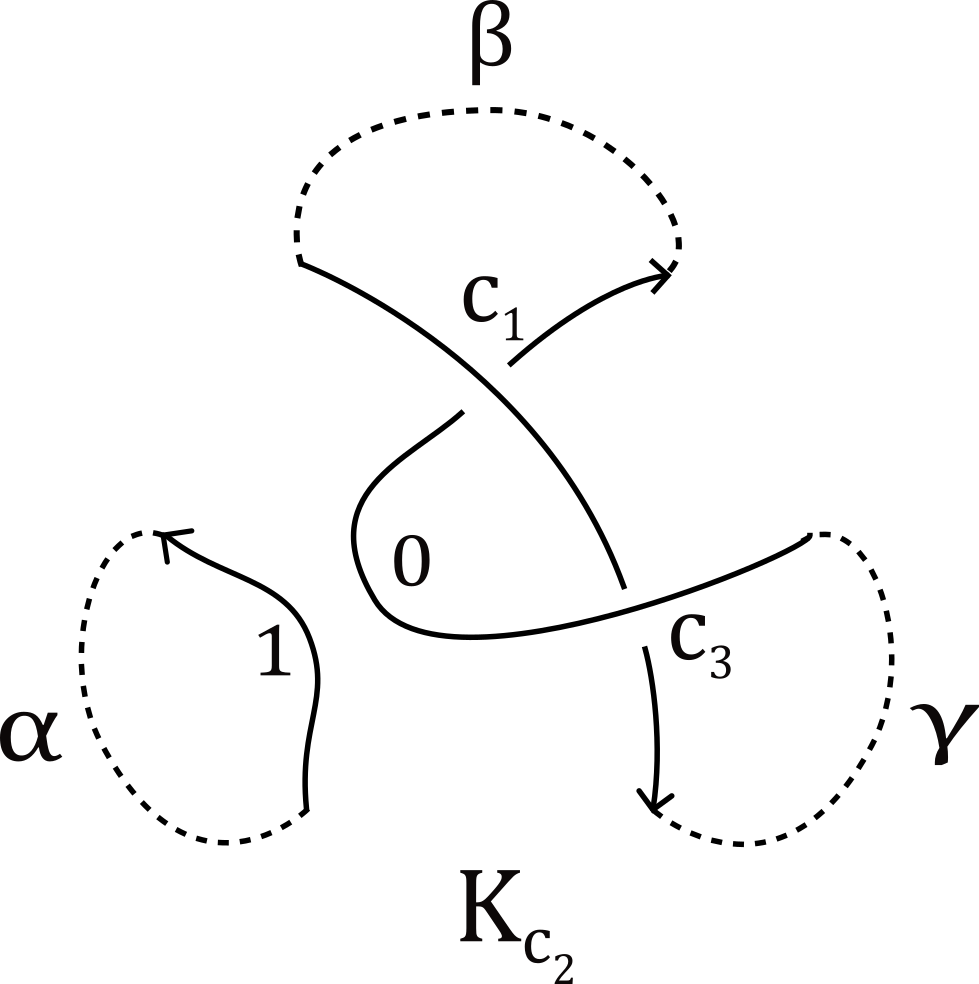}}
		\qquad 
		\subfloat{\includegraphics[width=0.25\textwidth]{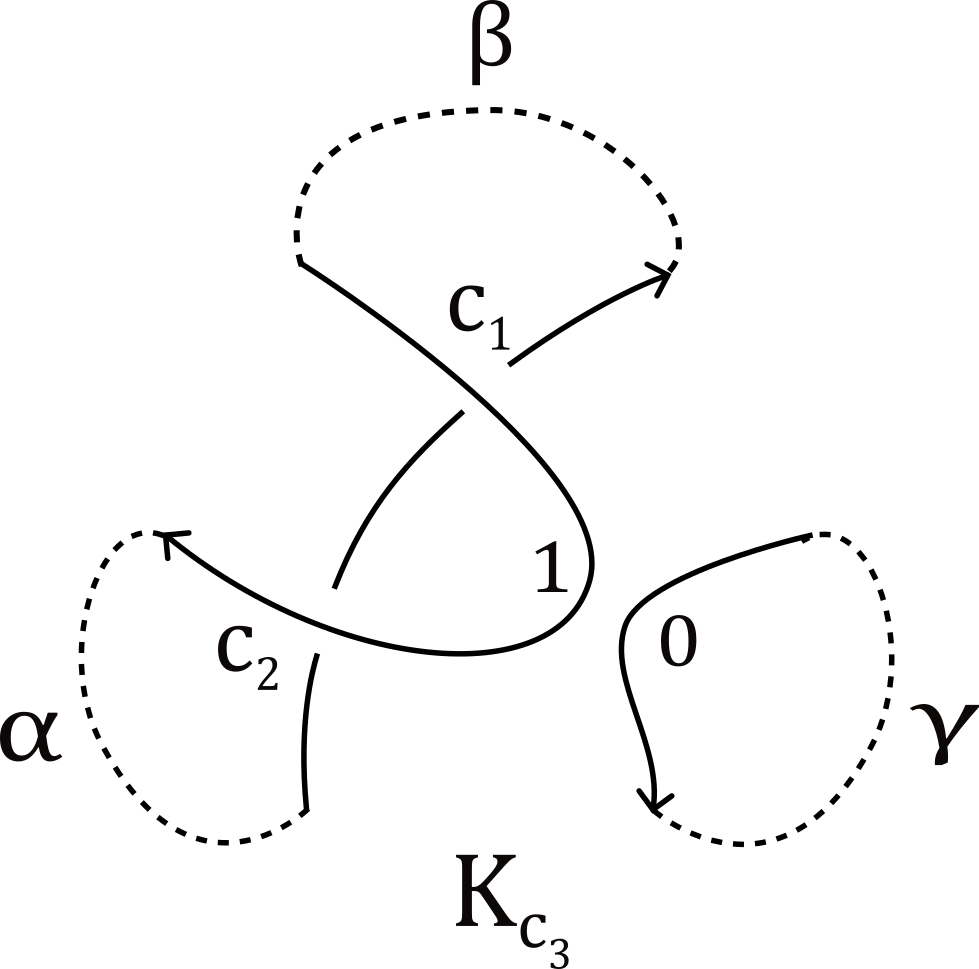}}
		\caption{Smoothing of $K$ at $c_{1}, c_{2}$, and  $c_{3}$ for the sequence $(S_{1}S_{2} S_{3})$.}
		\label{fig:Kc123}
	\end{figure}	
	\begin{figure}[htbp]
		\centering 
		\subfloat{\includegraphics[width=0.25\textwidth]{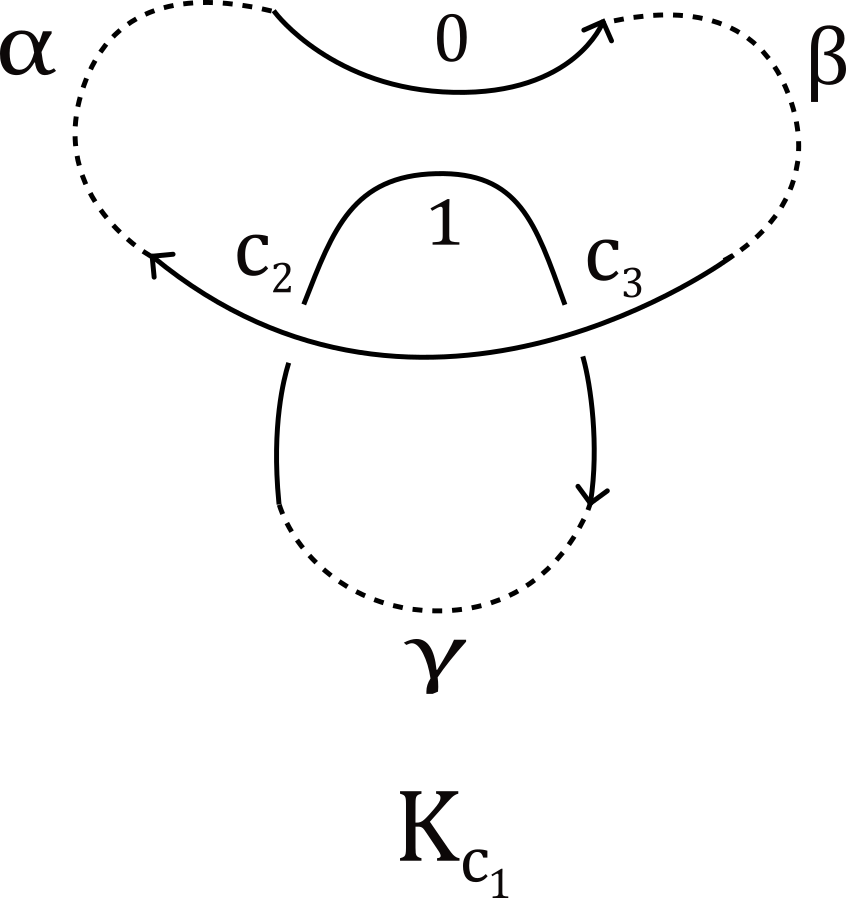}}
		\qquad 
		\subfloat{\includegraphics[width=0.25\textwidth]{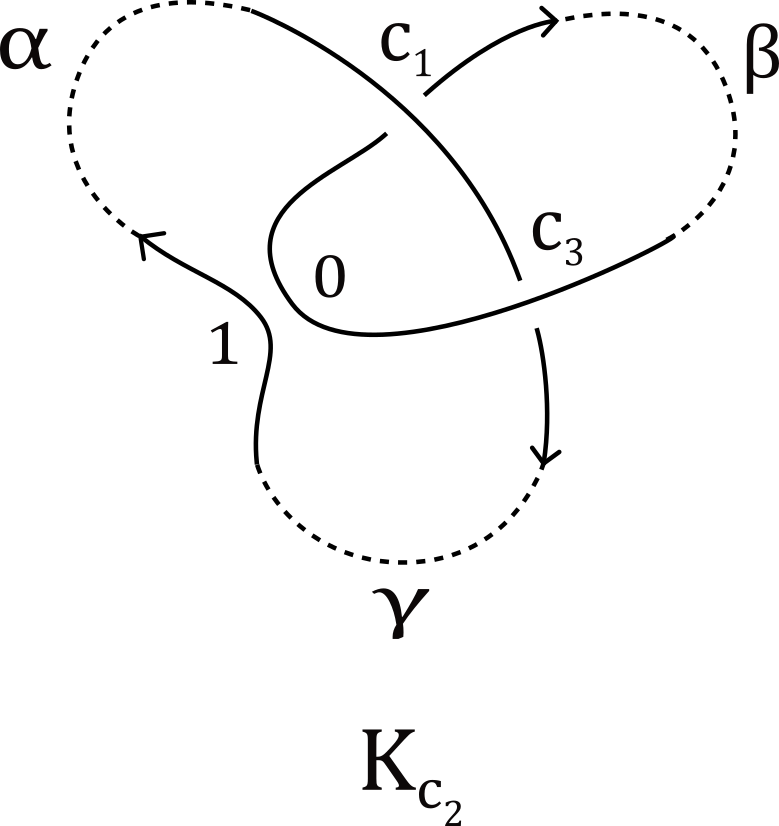}}
		\qquad 
		\subfloat{\includegraphics[width=0.25\textwidth]{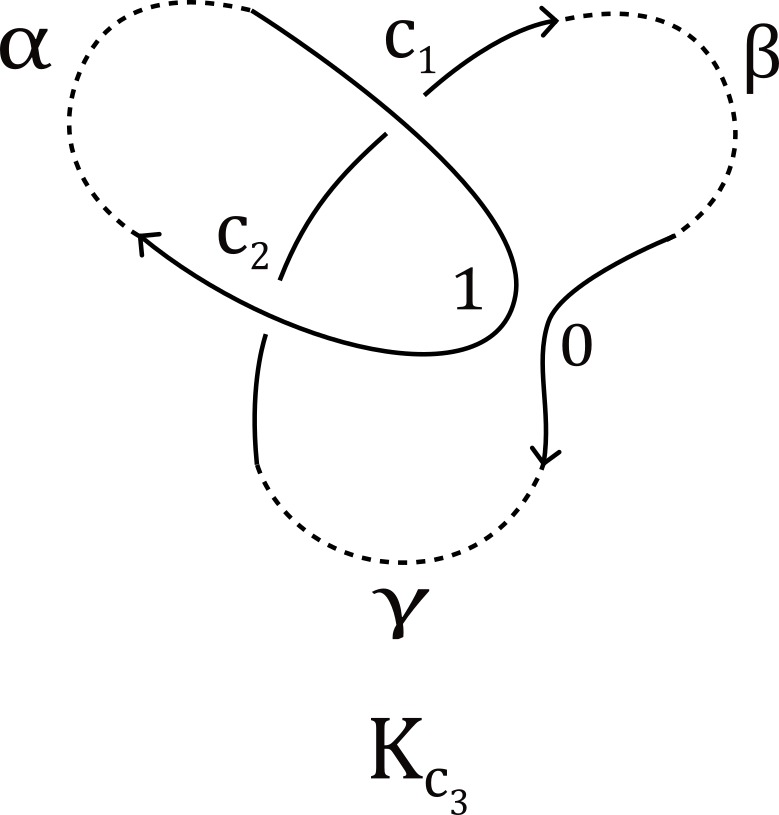}}
		\caption{Smoothing of $K$ at $c_{1}, c_{2}$, and$ c_{3}$ for the sequence $(S_{1}S_{3} S_{2})$.}
		\label{fig:Kc132}
	\end{figure}

	For distinct $\delta_1,\delta_2 \in \{\alpha,\beta,\gamma\}$,
	define $C(\delta_{1}\delta_{2})=$ crossings where $\delta_{1}$ passes over $\delta_{2}$. Also define
	\[
	l({\delta_1\delta_2})
	=\sum_{c \in C(\delta_{1}\delta_{2})} sgn(c).\]	
	
	From the diagrams of  $K_{c_1},K_{c_2}$, and $K_{c_3}$ for the sequence $(S_{1}S_{2} S_{3})$, we can write 
	\[ind^{\,0}(c_{1},K)= l({\alpha \beta})-l({\beta\alpha}) + l({\gamma\beta})- l({\beta \gamma}),\]
	\[ind^{\,0}(c_{2},K)= l({\alpha\beta})- l({\beta \alpha})+ l({\alpha\gamma})- l({\gamma\alpha}),\]
	\[ind^{\,0}(c_{3},K)= l({\beta\gamma})- l({\gamma \beta})+ l({\alpha\gamma})- l({ \gamma \alpha}),\]
	
	And, from the diagrams of  $K_{c_1},K_{c_2}$, and $K_{c_3}$ for the sequence $(S_{1}S_{3} S_{2})$, we can write 
	\[ind^{\,0}(c_{1},K)= l({\gamma \beta})-l({\beta\gamma}) + l({\gamma\alpha})- l({\alpha\gamma}),\]
	\[ind^{\,0}(c_{2},K)= l({\alpha\beta})- l({\beta \alpha})+ l({\gamma\beta})- l({\beta\gamma}),\]
	\[ind^{\,0}(c_{3},K
	)= l({\alpha\beta})- l({\beta\alpha})+ l({\alpha\gamma})- l({ \gamma \alpha}.\]
	For both sequences, we can write,
	\[
	ind^{\,0}(c_{2},K)
	=
	ind^{\,0}(c_{1},K)
	+
	ind^{\,0}(c_{3},K).
	\]
	Therefore,
	if two of the crossings belong to $ZC(K)$,
	then so does the third.
\end{proof}

The following lemma can be proved similarly.
\begin{lemma}\label{lem:r32}
	
	Let $\{i,j,k\}=\{1,2,3\}$. Consider the local diagram on the right hand side of the $C3a$ move (Fig.~\ref{fig:3crossings}) for a twisted knot diagram $K'$. 
	If $c_i^{\prime},c_j^{\prime} \in ZC(K')$, then $c_k^{\prime} \in ZC(K')$.
\end{lemma}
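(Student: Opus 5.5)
We follow the proof of Lemma~\ref{lem:r31}, now applied to the local picture on the right-hand side of the $C3a$ move. The plan is to derive a linear relation among the three affine indices $ind^{\,0}(c'_1,K')$, $ind^{\,0}(c'_2,K')$, $ind^{\,0}(c'_3,K')$, and then reduce it modulo $2$.

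First, note that the three strands $S_1,S_2,S_3$ are joined outside the local disk exactly as in $K$. The $C3a$ move changes only the inside of the disk, so the global connection patterns are again $(S_1S_2S_3)$ and $(S_1S_3S_2)$. The outer arcs $\alpha,\beta,\gamma$ are also the same as before. We keep the notation $C(\delta_1\delta_2)$ and $l(\delta_1\delta_2)$ for the signed count of crossings where $\delta_1$ passes over $\delta_2$; these quantities are unaffected by the move. Bars and virtual crossings do not enter $ind$, so they can be ignored.

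Second, for each connection pattern we smooth $K'$ at $c'_1$, $c'_2$ and $c'_3$ in turn. In each case we identify which of $\alpha,\beta,\gamma$ and which of the two remaining local crossings lie on the $0$-component. We then write $ind^{\,0}(c'_i,K')$ as a combination of terms $l(\delta_1\delta_2)-l(\delta_2\delta_1)$, plus the contributions $\pm sgn(c'_j)$ from the local crossings that lie on the $0$-component. These local contributions are the only new feature compared with Lemma~\ref{lem:r31}. In that lemma they evidently cancelled or were absorbed; on the right-hand side their placement relative to the smoothing point may differ.

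Third, we add and compare these expressions. We expect to obtain $ind^{\,0}(c'_2,K') \equiv ind^{\,0}(c'_1,K') + ind^{\,0}(c'_3,K') \pmod 2$, possibly up to a different choice of which index is the sum. Any leftover local terms are $\pm1$ each, and we expect them to come in pairs, so their total is even. Mod $2$ the signs are irrelevant, so parity is all we need. Once this holds, if two of the three indices are even, then so is the third. Finally, for a knot, $ind^{\,0}(c)+ind^{\,1}(c)$ is even, since the two components of $K_c$ intersect in an even algebraic count mod $2$. Hence being a zero crossing depends only on $ind^{\,0}$, and the claim for the $l=1$ component follows as well.

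The main obstacle is the bookkeeping in the second step. One has to read off correctly, for each smoothing on the right-hand side, which local crossings and which outer arcs lie on the $0$-component and with which over/under type. We would handle this by tracing the strand order from the left-hand-side computation, noting that the move preserves the cyclic order of the strand endpoints on the boundary of the disk. As a consistency check, we would also verify $ind^{\,0}(c'_i,K') \equiv ind^{\,0}(c_i,K) \pmod 2$. If that congruence holds, the lemma follows immediately from Lemma~\ref{lem:r31}, and we would try this route first.
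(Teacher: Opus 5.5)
Your plan is correct and is essentially the paper's argument: the paper only says that Lemma~\ref{lem:r32} is proved like Lemma~\ref{lem:r31}, i.e.\ by writing the three right-hand indices via the $l(\delta_1\delta_2)$ and adding them. Your mod-$2$ treatment of the local terms is sound, because $c_j'$ is a crossing between the two components of $K'_{c_i'}$ exactly when $c_i'$ is one between those of $K'_{c_j'}$; your fallback $ind^{\,0}(c_i',K')\equiv ind^{\,0}(c_i,K)\pmod 2$ is the comparison made at the start of the proof of Lemma~\ref{lem:r33}, which does not use Lemma~\ref{lem:r32}, and one correction is needed: $ind^{\,0}(c)\equiv ind^{\,1}(c)\pmod 2$ holds not because the two components of $K_c$ meet an even number of times (in a twisted diagram they need not), but because each crossing between them is over on one component and under on the other, so $ind^{\,1}(c,K)=-ind^{\,0}(c,K)$.
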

	
\begin{lemma}\label{lem:r33}
Let $\{i,j,k\}=\{1,2,3\}$and let $K$ and $K'$ be two twisted knot diagrams which are related by $C3a$ move (Fig.~\ref{fig:3crossings}). 
	If $c_i \in ZC(K)$, then $c_i^{\prime} \in ZC(K')$. And $z(c_i)= z(c'_i)$ for $i=1,2,3$.
	Moreover, $p^{l}(c_i)= p^{l}(c'_i),\, i=1,2,3; l \in \{0,1\}$.
\end{lemma}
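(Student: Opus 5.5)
Our plan is to compare the smoothed diagrams $K_{c_i}$ and $K'_{c'_i}$ directly, using the same bookkeeping as in the proof of Lemma~\ref{lem:r31}. The $C3a$ move is supported in a disk that contains no bars, and the three outer strands $\alpha,\beta,\gamma$ are untouched. For each $i$, the oriented smoothing at $c_i$ and at $c'_i$ therefore joins the same endpoints of the outer strands. Hence the $l$-component of $K_{c_i}$ and that of $K'_{c'_i}$ traverse the same outer strands in the same order, and carry the same bars. This gives $p^{l}(c_i)=p^{l}(c'_i)$ immediately.

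Next we compute $ind^{\,0}(c'_i,K')$ for both reconnection patterns $(S_1S_2S_3)$ and $(S_1S_3S_2)$, as in Lemma~\ref{lem:r31} and Lemma~\ref{lem:r32}. Crossings outside the disk contribute the same terms $\pm l(\delta_1\delta_2)$ to $ind^{\,0}(c_i,K)$ and to $ind^{\,0}(c'_i,K')$. Inside the disk, the $0$-component meets the other two of the three crossings. The move preserves each crossing's sign and its over/under roles (which strand passes over which), so each meets these crossings with the same signs and over/under types. We expect the resulting formulas for $ind^{\,0}(c'_i,K')$ to coincide with those for $ind^{\,0}(c_i,K)$ (local contributions cancel or agree). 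Then $ind^{\,0}(c_i,K)=ind^{\,0}(c'_i,K')$, which gives $c_i\in ZC(K)\iff c'_i\in ZC(K')$. In the same way, $ind^{\,l}(e,K)=ind^{\,l}(e,K')$ for every crossing $e$ outside the disk, since an outer crossing's smoothing component passes through the disk along whole strands and meets the three local crossings with the same signs and types on both sides. Hence $ZC(K)\setminus\{c_1,c_2,c_3\}=ZC(K')\setminus\{c'_1,c'_2,c'_3\}$ under the obvious identification.

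For $z(c_i)=z(c'_i)$, we split $zind^{\,0}$ into two parts: contributions from outer zero crossings, and contributions from the local zero crossings among the other two $c_j,c_k$. The outer part agrees on both sides by the previous paragraph. For the local part, Lemma~\ref{lem:r31} and Lemma~\ref{lem:r32} together with the identification above show that the sets $\{j : c_j\in ZC(K)\}$ and $\{j : c'_j\in ZC(K')\}$ coincide. They are moreover of size $0$, $1$ or $3$, never $2$, since two zero crossings force the third. Along the $0$-component of $K_{c_i}$ (resp.\ $K'_{c'_i}$), each $c_j$ that is a zero crossing is met with the same sign and type on both sides, so the local parts agree as well. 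Reducing mod $2$ gives $z(c_i)=z(c'_i)$.

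The main obstacle is the case analysis showing that the local crossings are encountered on the smoothing components with identical sign and over/under type before and after the move, for both reconnection orders. The move reverses the order in which the crossings occur along each strand, and the $0$-component may traverse a local crossing twice (a self-crossing of that component, which contributes zero). We would handle this by tabulating, for each $i$ and each of the two reconnection sequences, which of $S_1,S_2,S_3$ lie on the $0$-component, using Fig.~\ref{fig:Kc123} and Fig.~\ref{fig:Kc132} and their analogues for $K'$. Since only membership and signs matter, and not the order of encounter, this should reduce to checking that each local crossing joins the same pair of strands with the same sign on both sides.
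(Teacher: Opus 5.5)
Your treatment of $p^{l}$ and of the crossings $e$ outside the disk is fine. For such $e$ the strands $S_1,S_2,S_3$ lie entirely on fixed components of $K_e$, so $c_j$ and $c'_j$ can be matched term by term. The gap is in comparing $ind^{\,l}(c_i,K)$ with $ind^{\,l}(c'_i,K')$. Your planned reduction (``only membership and signs matter \dots\ each local crossing joins the same pair of strands with the same sign on both sides'') does not suffice there.

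The oriented smoothing at $c_i$ cuts the two strands through $c_i$. For each of them, the arc just before $c_i$ and the arc just after $c_i$ lie on \emph{different} components of $K_{c_i}$. So your tabulation of ``which of $S_1,S_2,S_3$ lie on the $0$-component'' is not even well defined for these two strands. The move reverses the order of crossings along each strand, so $c_j$ (resp.\ $c_k$) passes to the other side of $c_i$ along the strand it shares with $c_i$. That end of $c_j$ (resp.\ $c_k$) therefore changes smoothing component, while the third strand keeps its component. Consequently each of $c_j,c_k$ switches between being a crossing of one component with itself (contributing $0$) and being an inter-component crossing (contributing $\pm sgn$). The local terms of $ind^{\,l}(c_i,K)$ and $ind^{\,l}(c'_i,K')$ thus come from different crossings. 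The fact that the move preserves each crossing's sign and over/under roles cannot by itself give their equality.

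The missing ingredient is a relation between $c_j$ and $c_k$, the two crossings on the third strand, coming from how signs and heights are arranged in $C3a$. This is exactly what the paper's proof uses. There, $c_j,c_k$ either have the same sign and opposite over/under information, or opposite signs and the same over/under information. Hence when they are inter-component their contributions $(-1)^{\epsilon}sgn$ cancel, and otherwise they are self-crossings contributing nothing. The local part is then the same on both sides, which gives $ind^{\,l}(c_i,K)=ind^{\,l}(c'_i,K')$.

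Your argument for $z(c_i)=z(c'_i)$ (``each $c_j$ that is a zero crossing is met with the same sign and type on both sides'') relies on the same incorrect term-by-term matching. Your decomposition does survive once the index step is repaired. If $c_i\in ZC(K)$, Lemma~\ref{lem:r31} makes $c_j,c_k$ both zero crossings or both not. So on each side the local part of $zind^{\,l}$ is either the full local part of $ind^{\,l}$ or $0$. The outer parts agree by your (correct) argument for outer crossings.
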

\begin{proof}
	From Fig.~\ref{fig:move3a} we see that the components of $K_{c_{i}}$ and $K'_{c_{i}^{\prime}}\,,i \in \{1,2,3\}$ only differ locally by two crossings with the third strand labeled $l_1\, (\in \{0,1\})$. 

\begin{figure}[htbp]
	\centering 
	\subfloat{\includegraphics[width=0.4\textwidth]{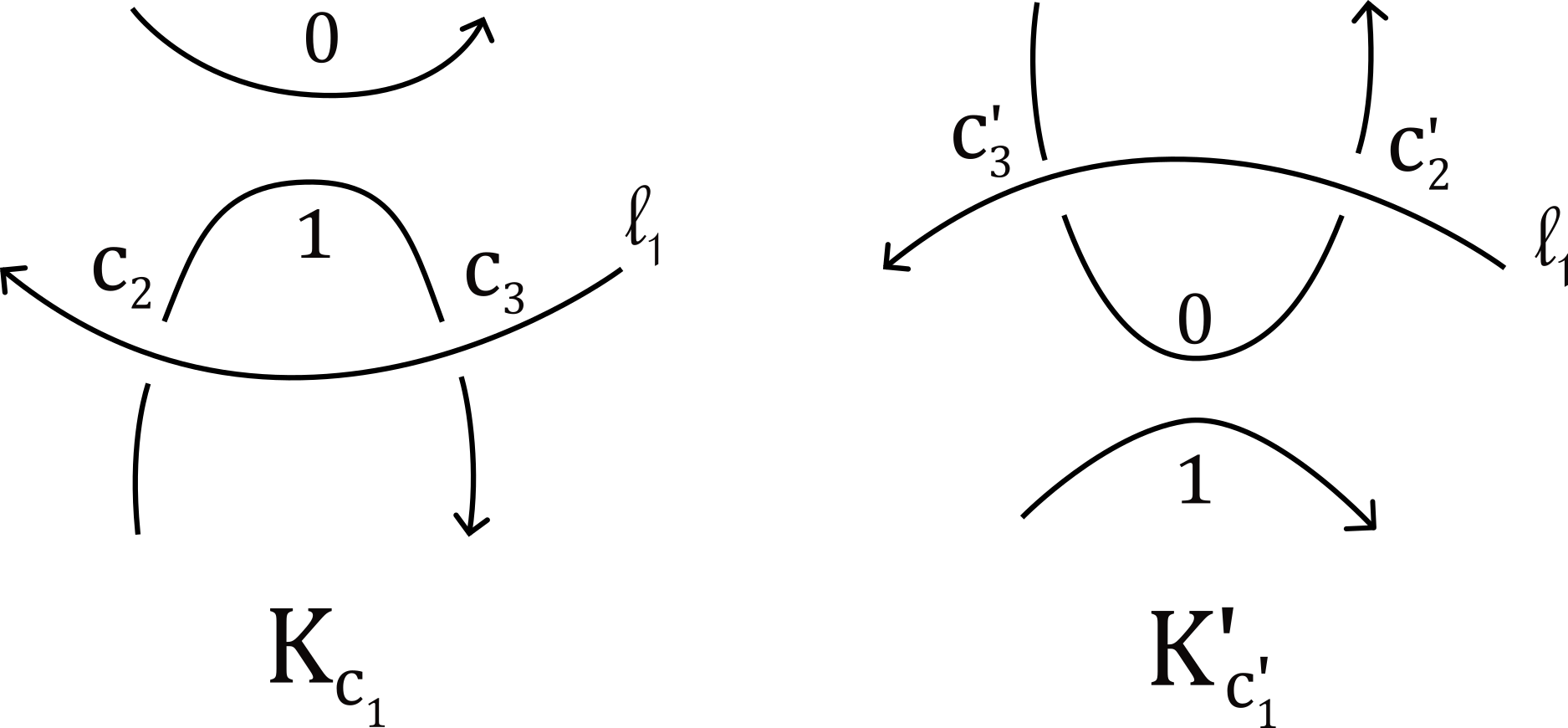}}
	\qquad 
	\subfloat{\includegraphics[width=0.4\textwidth]{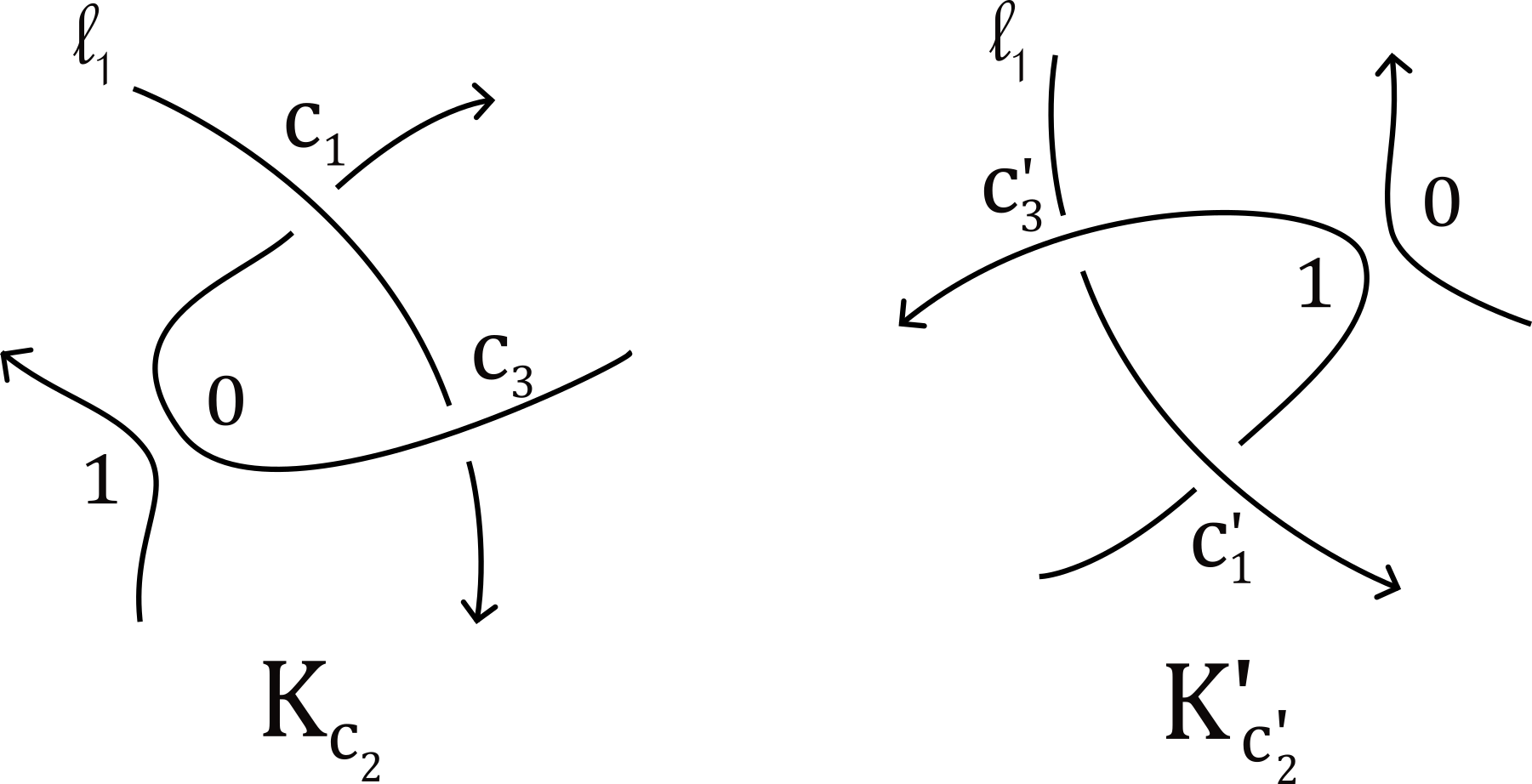}}
	\newline
	\vfill
	\subfloat{\includegraphics[width=0.4\textwidth]{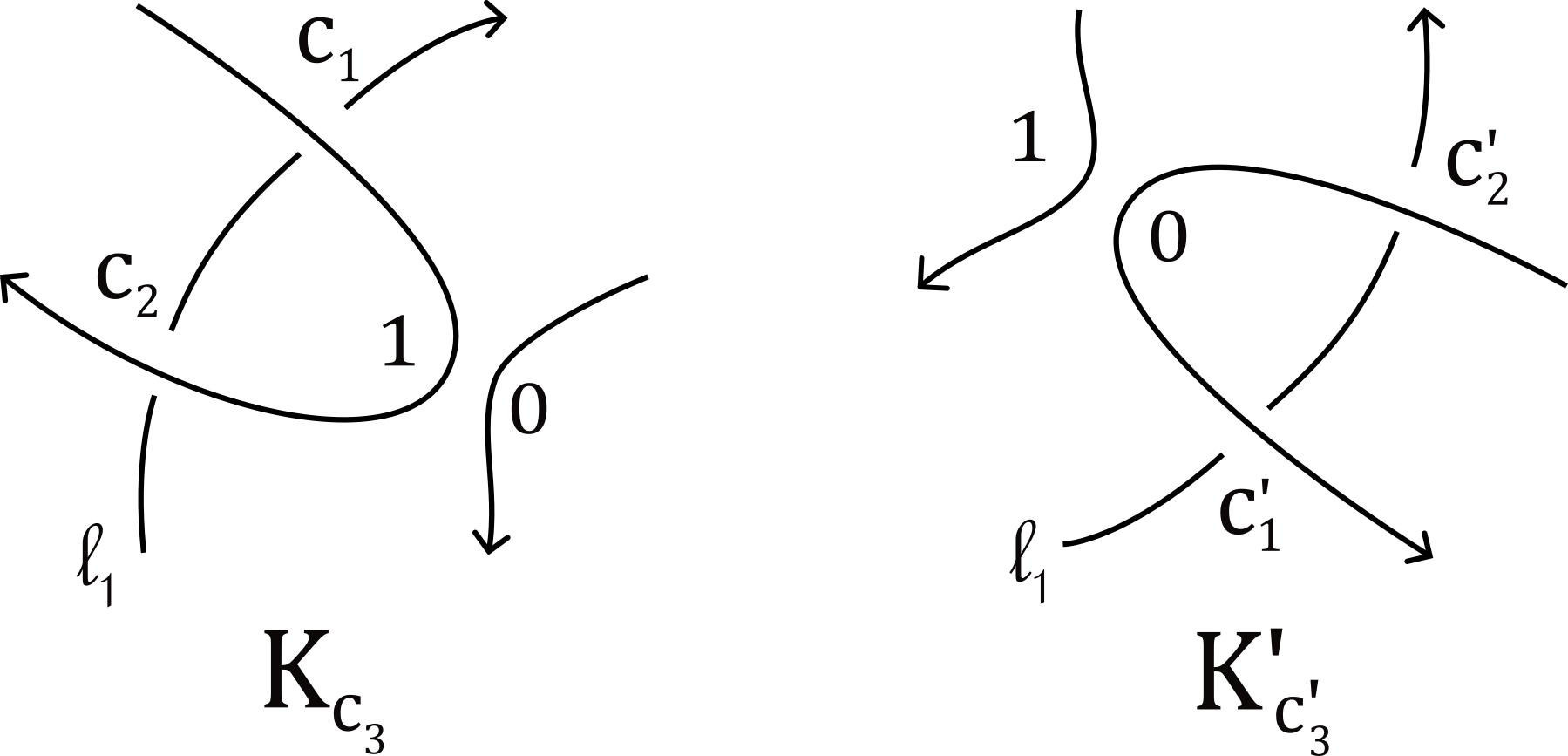}}
	\caption{Smoothing diagrams of $K$ at $c_i$ and $K'$ at $c_i'$, for $i=1,2,3$.}
	\label{fig:move3a}
\end{figure}

	Now, the strands that are involved with those two crossings can be from the same component or different components. If both are from the same component then they are self-crossings. And, if they are from different components then their signs
	cancel each other in every case. Therefore, \[ind^{\,l}(c_{i},K)= ind^{\,l}(c_{i}^{\prime},K')\,,i \in \{1,2,3\};\, l \in \{0,1\},\]
	which implies $
	c_{i}\in ZC(K)$ if and only if $c_{i}^{\prime}\in ZC(K')\,,i \in \{1,2,3\}$. 
	Assume that $c_i \in ZC(K)$. Then by Lemma~\ref{lem:r31}, $c_j \in ZC(K)$ if and only if $c_k \in ZC(K)$ for $i \ne j \ne k$. Similarly $c'_j \in ZC(K')$ if and only if $c'_k \in ZC(K')$ for $i \ne j \ne k$ by Lemma~\ref{lem:r32}. Since  $c_j, c_k$ (resp. $c'_j, c'_k$) either have the same sign but opposite over/under information or opposite signs but the same over/under information, we have $z(c_i)=z(c'_i)$ for $i=1,2,3$.

	Since  there are no bars involved in the moves, and the labels on the components are preserved, we have $p^{l}(c_i)= p^{l}(c'_i),\, i=1,2,3$, for $l \in \{0,1\}$.
\end{proof}
\begin{lemma}\label{lem:c3inv}
	The polynomial $Q^{z}_{K}(s,t)$ is invariant under the $C3a$-move.
\end{lemma}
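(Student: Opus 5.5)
We compare $Q^{z}_{K}(s,t)$ and $Q^{z}_{K'}(s,t)$ crossing by crossing, splitting the crossings into those not involved in the move and the three crossings $c_1,c_2,c_3$ (resp. $c'_1,c'_2,c'_3$) shown in Fig.~\ref{fig:3crossings}. The natural bijection is $c_i \leftrightarrow c'_i$, together with the identity on $C(K)\setminus\{c_1,c_2,c_3\}=C(K')\setminus\{c'_1,c'_2,c'_3\}$. It then suffices to show that corresponding crossings have the same sign, the same membership in the zero set, the same value of $z$ when they are zero crossings, and the same bar parities $p^{0},p^{1}$. With these four facts the summands of $Q^{z}_{K}$ and $Q^{z}_{K'}$ agree term by term.

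For the three crossings in the move, a $C3a$ move preserves the sign of each crossing, so $sgn(c_i)=sgn(c'_i)$. Lemma~\ref{lem:r33} gives the rest: $c_i\in ZC(K)$ if and only if $c'_i\in ZC(K')$, $z(c_i)=z(c'_i)$, and $p^{l}(c_i)=p^{l}(c'_i)$. Hence the contribution of $c_i$ equals that of $c'_i$ in both sums of the definition.

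For an outside crossing $e$, we compare $K_e$ and $K'_e$. These differ only by a $C3a$ move among three strands, each of which lies on the $0$- or $1$-component of the smoothing. Along the $l$-component, the crossings $c_1,c_2,c_3$ appear in both diagrams with the same signs and the same over/under type on each strand, since a $C3a$ move preserves which strand is over at each crossing. A crossing of a component with itself contributes zero anyway. Hence $ind^{\,l}(e,K)=ind^{\,l}(e,K')$, so $e\in ZC(K)$ if and only if $e\in ZC(K')$. No bars are involved, so $p^{l}(e)$ is unchanged.

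For $z(e)$ we need more, namely that the set of zero crossings among those met on the $l$-component contributes the same amount. Outside crossings keep their zero status, as just shown, and $c_i$ and $c'_i$ have matching zero status by Lemma~\ref{lem:r33}. Each $c_i$ also contributes the same $(-1)^{\epsilon}sgn$ term as $c'_i$ whenever it is encountered. Therefore $zind^{\,l}(e,K)=zind^{\,l}(e,K')$.

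The main obstacle is making the claim ``$c_i$ is encountered on the $l$-component of $K_e$ with the same over/under type as $c'_i$ on $K'_e$'' rigorous. The cyclic order of the local crossings along the strands changes under $C3a$, but only the multiset of (strand, crossing, over/under) data matters for the sums. I would verify this by listing, for each strand $S_1,S_2,S_3$, which crossings it passes over and under before and after the move; $C3a$ keeps the top, middle and bottom strands fixed, so these data coincide. Combining the three paragraphs yields $Q^{z}_{K}(s,t)=Q^{z}_{K'}(s,t)$.
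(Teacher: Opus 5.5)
Your proof is correct and has the same structure as the paper's. The three local crossings are handled entirely by Lemma~\ref{lem:r33}, and for each outside crossing $e$ you show that $ind^{\,l}(e,\cdot)$ is unchanged, hence also its zero status, $zind^{\,l}(e,\cdot)$ and $p^{l}(e)$.

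The one difference is how you justify the outside-crossing step. You match each $c_i$ on $K_e$ with $c'_i$ on $K'_e$: same pair of strands, same component labels, same sign and over/under data, and the same zero status by Lemma~\ref{lem:r33}. The paper instead splits into cases on the labels $l_1,l_2,l_3$ and argues that the inter-component local crossings cancel within each side. Your term-by-term matching is slightly cleaner, and that step works for any oriented version of the third move, not only $C3a$.
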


\begin{proof}
	Let the twisted knot diagrams $K$ and $K'$ are related by the $C3a$ move. 
	For all the crossings $e \in C(K) \setminus \{c_{1}, c_{2}, c_{3}\}= C(K') \setminus \{c_{1}^{\prime}, c_{2}^{\prime}, c_{3}^{\prime}\}$, $K_{e}$ and $K'_{e}$ are related as shown in Fig.~\ref{fig:3labels}. Let $l_{1},l_{2},l_{3}$ be the component labels on the three strands. 
	\begin{figure}[htbp]
		\centering
		\includegraphics[width=0.45\textwidth]{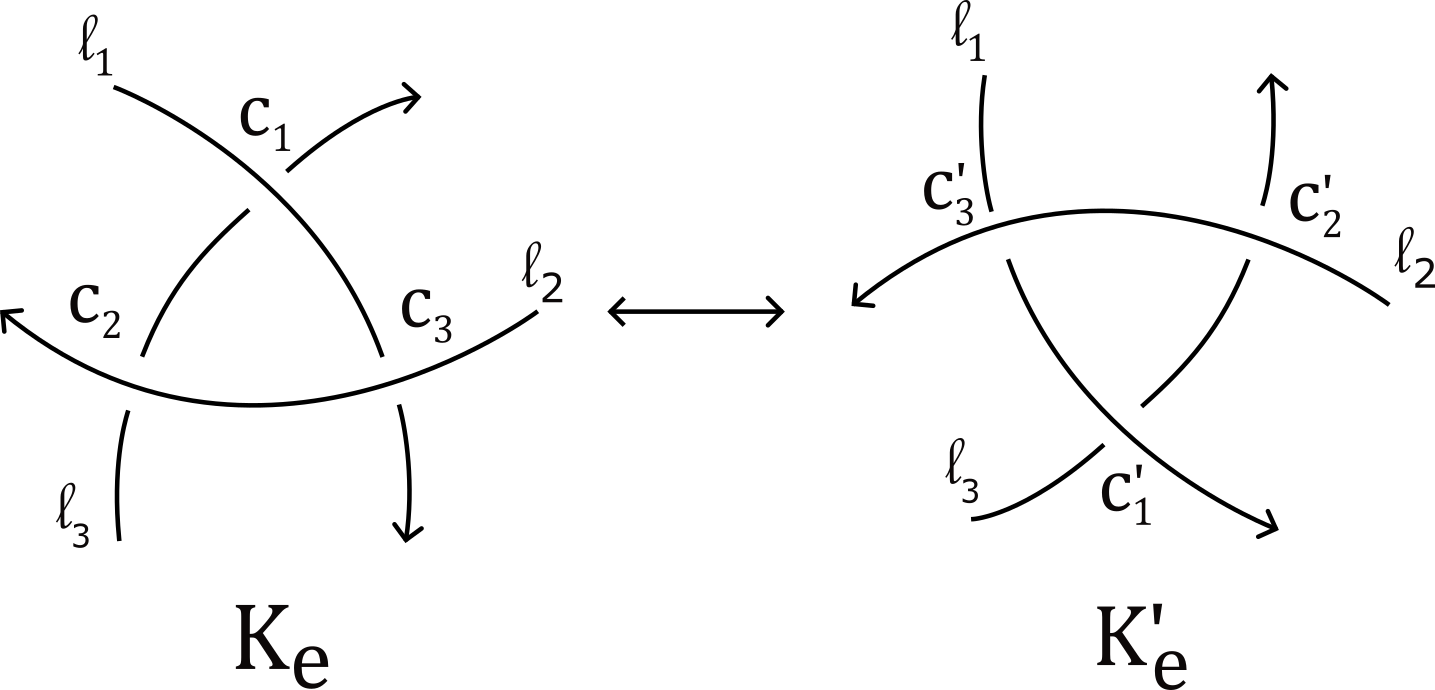}
		\caption{Component labels $l_1, l_2, l_3$ on the three strands involved in the $C3a$ move.}
		\label{fig:3labels}
	\end{figure}

	Case 1: When $l_{1}=l_{2}=l_{3}$ then all crossings $c_{i}$ and $c_{i}^{\prime}\,,i \in \{1,2,3\}$ are self-crossings. Therefore, \[ind^{\,l}(e,K)= ind^{\,l}(e,K')\quad, l \in \{0,1\}.\]
	
	Case 2: When $l_{i}=l_{j}\neq l_{k}\,; i,j,k \in \{1,2,3\}$, then 
	the pair of crossing between the components in the local diagram of $K_{e}$ and $K'_{e}$
	appears as both overcrossings or both undercrossings with  opposite signs or one overcrossing and one undercrossing with same sign. Therefore, \[ind^{\,l}(e,K)= ind^{\,l}(e,K'),\quad l \in \{0,1\}.\]
	Both cases imply that $e \in ZC(K)$ if and only if  $e \in ZC(K').$
	
	By Lemmas~\ref{lem:r31}, \ref{lem:r32},and  \ref{lem:r33}, we have $zind^{\,l}(e,K)=zind^{\,l}(e,K'),\, l \in \{0,1\}$ when $e \in ZC(K)$. Then, $z(e)$ do not change under the move. Moreover, $p^{l}(e),\, l \in \{0,1\}$ also do not change as the local diagram of $C3a$ does not involve any bars.
	Hence, 	by Lemmas~\ref{lem:r31}, \ref{lem:r32}, and \ref{lem:r33}, we have, $Q^{z}_{K}(s,t=Q^{z}_{K'}(s,t)$.
	This completes the proof.
\end{proof}

\begin{lemma}\label{lem:vinv}
	$Q^{z}_{K}(s,t)$ is invariant under all virtual Reidemeister moves.
\end{lemma}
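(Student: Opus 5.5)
By Theorem~\ref{th:gentw}, it suffices to check invariance under the virtual moves in the generating set, namely $V1a$, $V2a$, $V3a$ and $V4g$. The virtual moves $V1b$, $V2b$, etc. are consequences of these by \cite{Dan}, so they need no separate treatment. The guiding observation is that every ingredient of $Q^{z}_{K}(s,t)$ depends only on combinatorial data that virtual moves leave untouched. This data is the set $C(K)$ of classical crossings, their signs, the smoothings $K_c$ up to virtual moves, the cyclic order of classical crossings met along each smoothing component, with over/under information, and the number of bars on each component.

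First I would treat $V1a$, $V2a$ and $V3a$. These moves involve only virtual crossings, so there is a natural bijection $C(K)\to C(K')$ preserving signs. For each $c\in C(K)$, smoothing at $c$ commutes with the move: performing the oriented smoothing at $c$ away from the local disk, we see that $K_c$ and $K'_c$ are related by the same virtual move. A virtual move does not change the sequence $\gamma_1,\dots,\gamma_{k_l}$ of classical crossings met along the $l$-component, nor their over/under types or signs. Hence $ind^{\,l}(c,K)=ind^{\,l}(c,K')$, so $ZC(K)=ZC(K')$ under the bijection. Since $zind^{\,l}$ is computed from the same sequence restricted to the same set $ZC$, we get $z(c)$ unchanged. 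No bars appear in these moves, so $p^{l}(c)$ is unchanged, and the two polynomials agree term by term.

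Next I would treat $V4g$, which is the one step needing care, since it involves a classical crossing $c_0$ moving past a virtual crossing. Here the bijection sends $c_0$ to its counterpart $c_0'$. I would check directly from the local picture of $V4g$ three facts: that the sign is preserved, that the over and under strands are preserved, and that each strand passes through the classical crossing in the same direction. For $e\neq c_0$, the smoothing $K_e$ and $K'_e$ are again related by a $V4$ move, possibly with a different orientation version. So the classical crossings met along each component, including $c_0$ with its over/under type and sign, occur in the same cyclic order. For $e=c_0$, the oriented smoothing at $c_0$ in $K$ and at $c_0'$ in $K'$ give diagrams related by virtual $V2$-type moves, namely a detour of a strand across the smoothed region. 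The $0$- and $1$-labels are preserved because the orientation at the crossing is the same.

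The main obstacle is exactly this labelling check for $e=c_0$ in $V4g$, because the labels determine $p^{0},p^{1}$ and the side on which $ind^{\,l}$ is computed. I would verify it by drawing $K_{c_0}$ and $K'_{c_0'}$ as in Fig.~\ref{fig:smoothing} and confirming that the outgoing arcs of corresponding labels match. Once this is done, $ind^{\,l}$, $zind^{\,l}$ and $p^{l}$ agree for all crossings, and therefore $Q^{z}_{K}(s,t)=Q^{z}_{K'}(s,t)$.
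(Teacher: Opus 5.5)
Your proposal is correct and follows essentially the same route as the paper. Both reduce to $V1a$, $V2a$, $V3a$, $V4g$ and observe that these moves leave the classical crossings, their signs and over/under data, the component labels of each smoothing, and the bar parities unchanged, so every term of $Q^{z}_{K}(s,t)$ is preserved. Your check of the smoothing at the classical crossing in $V4g$ spells out what the paper asserts in a single sentence, but the underlying idea is the same.
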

\begin{proof}
		The moves $V1a$, $V2a$, and $V3a$ involve only virtual crossings. And in the move $V4g$, a strand passes through a virtual crossing. This changes only the position of the virtual crossing and leaves all classical crossings, their signs, component labels, affine indices, and the parity of the number of bars on each component unchanged. Therefore, $Q^{z}_{K}(s,t)$ is invariant under all virtual Reidemeister moves.
\end{proof}
Now, we show that $Q^{z}_{K}(s,t)$ is invariant under the twisted moves $T1a, T2a, T3a$ and $T3c$. Since  the $T1a$ and $T2a$ move do not involve any classical crossings, it is enough to show that $Q^{z}_{K}(s,t)$ is invariant under the twisted moves $T3a$ and $T3c$.
\begin{lemma}\label{lem:t3inv}
	$Q^{z}_{K}(s,t)$ is invariant under the twisted moves $T3a$ and $T3c$.
\end{lemma}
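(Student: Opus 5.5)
The plan is to treat $T3a$ and $T3c$ together, as in the proofs of Lemmas~\ref{lem:c1inv}--\ref{lem:c3inv}. Let $K$ and $K'$ differ by one of these moves. Let $c\in C(K)$ be the crossing in the local picture, with bars on all four arcs around it, and let $c'\in C(K')$ be the corresponding crossing with no bars, whose over/under information is switched. Every other classical crossing $e$ appears unchanged in both diagrams, so $C(K)\setminus\{c\}=C(K')\setminus\{c'\}$. I will show that every ingredient of the corresponding terms of $Q^{z}_{K}$ and $Q^{z}_{K'}$ agrees: the sign, membership in $ZC$, the value $z$, and the bar parities $p^{0},p^{1}$.

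\textbf{Step 1 (the crossing $c$ itself).} First I read off from Fig.~\ref{fig:tw_genset} that $sgn(c)=sgn(c')$. The switch of over/under is compensated by the local orientation reversal that the bars encode; this is the same fact Kamada--Kamada use to show $Q_K$ is invariant. Next I compare the oriented smoothings $K_c$ and $K'_{c'}$. Each smoothing component consists of one incoming half-edge and one outgoing half-edge at the crossing. It therefore picks up exactly two of the four bars, so each $p^{l}$ keeps its parity. Away from the local picture the two smoothed diagrams coincide. So the crossings $\gamma_i$ met along each component are the same, with the same signs and the same over/under types, and hence $ind^{\,l}(c,K)=ind^{\,l}(c',K')$. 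The labels $0$ and $1$ might be interchanged, but that is harmless: the summand is symmetric in $(p^{0},p^{1})$, and $z^{0}=z^{1}$ by the Remark. Thus $c\in ZC(K)$ if and only if $c'\in ZC(K')$.

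\textbf{Step 2 (other crossings $e$).} In $K_e$ and $K'_e$ the only change is at $c$ versus $c'$. Each strand through $c$ carries two of the four bars, so whichever components those strands lie on, every component gains an even number of bars and $p^{l}(e)$ is unchanged. In $ind^{\,l}(e,\cdot)$ the crossing $c$ contributes $\pm sgn(c)$, and $c'$ contributes the same quantity with $\epsilon_i$ possibly flipped. If both strands at $c$ lie on the same component of $K_e$, the contribution is $0$ in both diagrams. Otherwise the two contributions differ at most by a sign, hence by at most $2$, so they agree mod $2$. Consequently $e\in ZC(K)$ if and only if $e\in ZC(K')$.

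\textbf{Step 3 (zero affine indices).} By Steps~1 and~2 the zero crossings of $K$ and $K'$ correspond bijectively. For $e\in ZC(K)$, $zind^{\,l}(e,K)$ and $zind^{\,l}(e,K')$ are sums over corresponding crossings, and corresponding terms agree up to sign. Reducing mod $2$ gives $z(e)$ unchanged, and the same argument applied to $c$ gives $z(c)=z(c')$. Summing term by term then yields $Q^{z}_{K}(s,t)=Q^{z}_{K'}(s,t)$ for both $T3a$ and $T3c$.

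The main obstacle is Step~1: checking carefully from the figures that $sgn(c)=sgn(c')$, and that the four bars are split two and two among the smoothing components, for both orientation versions $T3a$ and $T3c$. I would verify this by drawing the oriented smoothing of each version explicitly. Everything else only uses that the quantities $z$ and $p^{l}$ are taken mod $2$.
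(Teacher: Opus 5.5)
Your proposal is correct and follows essentially the same route as the paper. You compare the smoothings at $c$ and $c'$ (same sign, component labels possibly swapped, two of the four bars on each smoothing component), show that every other crossing $e$ keeps the parity of its affine index because $c$ and $c'$ contribute $\pm sgn(c)$ with opposite over/under types, and then conclude term by term that $ZC$, $z$ and $p^{l}$ are preserved. Your explicit mod-$2$ argument that $z(e)$ and $z(c)$ are unchanged is somewhat more detailed than the paper's, which handles $c$ through the identity $ind^{\,0}(c,K)=-ind^{\,1}(c,K)$ to get $ind^{\,0}(c,K)=-ind^{\,0}(c',K')$.
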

\begin{proof}
	Suppose the twisted knot diagrams $K$ and $K'$ are related by the $T3a$ move as shown in Fig.~\ref{fig:t3fig1}.
		\begin{figure}[htbp]
		\centering
		\includegraphics[width=0.7\textwidth]{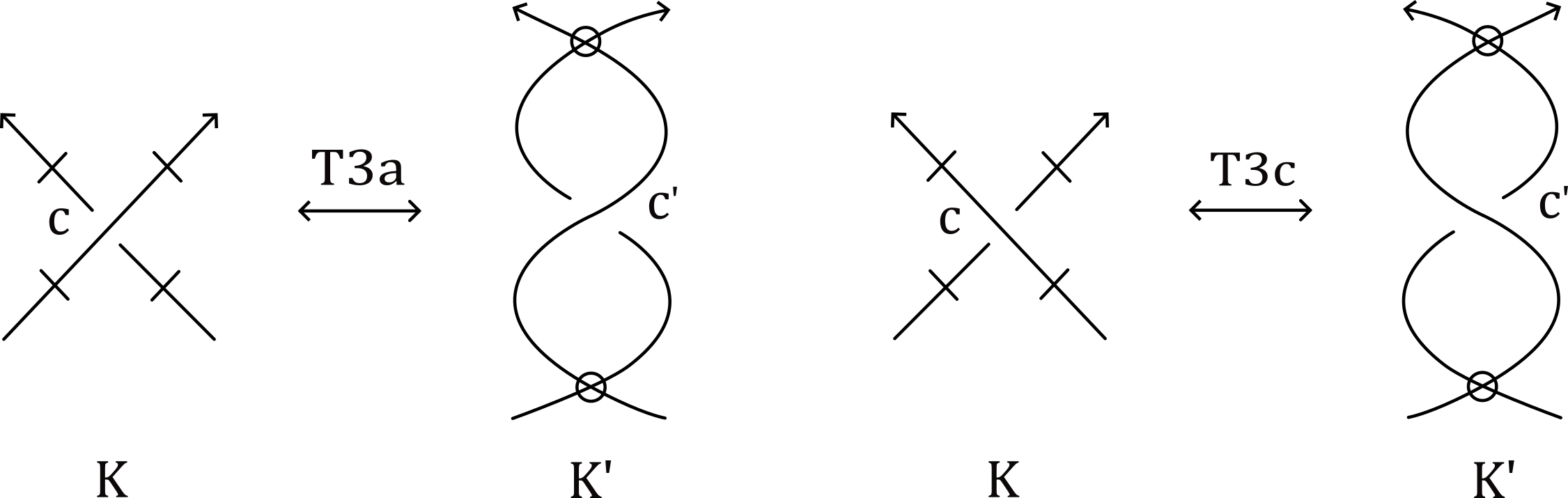}
		\caption{The twisted knot diagrams $K$ and $K'$ related by twisted Reidemeister moves $T3a$ and $T3c$.}
		\label{fig:t3fig1}
	\end{figure}
		For $e \in C(K) \setminus \{c\} = C(K') \setminus \{c^{\prime}\}$, $K_{e}$ and $K'_{e}$ are related as shown in Fig.~\ref{fig:t3e}. Let $l_{1}$, $l_{2}$ ($\in \{0,1\}$) be the component labels on the strands. Notice that $sgn(c)= sgn(c^{\prime})$, but the crossing type changes from over (resp. under) to under (resp. over) in each component. \\
	Therefore, when $l_{1} \neq l_{2}$,
	\[|ind^{\,l}(e, K)-ind^{\,l}(e, K')|=|sgn(c)+sgn(c')|=2, \quad \text{for} \,\; l \in \{0,1\}.\]
	And if $l_{1}=l_{2}$ then $c$ and $c'$ are self-crossings.
	Therefore, in both cases $e \in ZC(K)$ if and only if $e \in ZC(K')$. 
	\begin{figure}[htbp]
	\centering
		\subfloat[\label{fig:t3e}]{\includegraphics[width=0.34\textwidth]{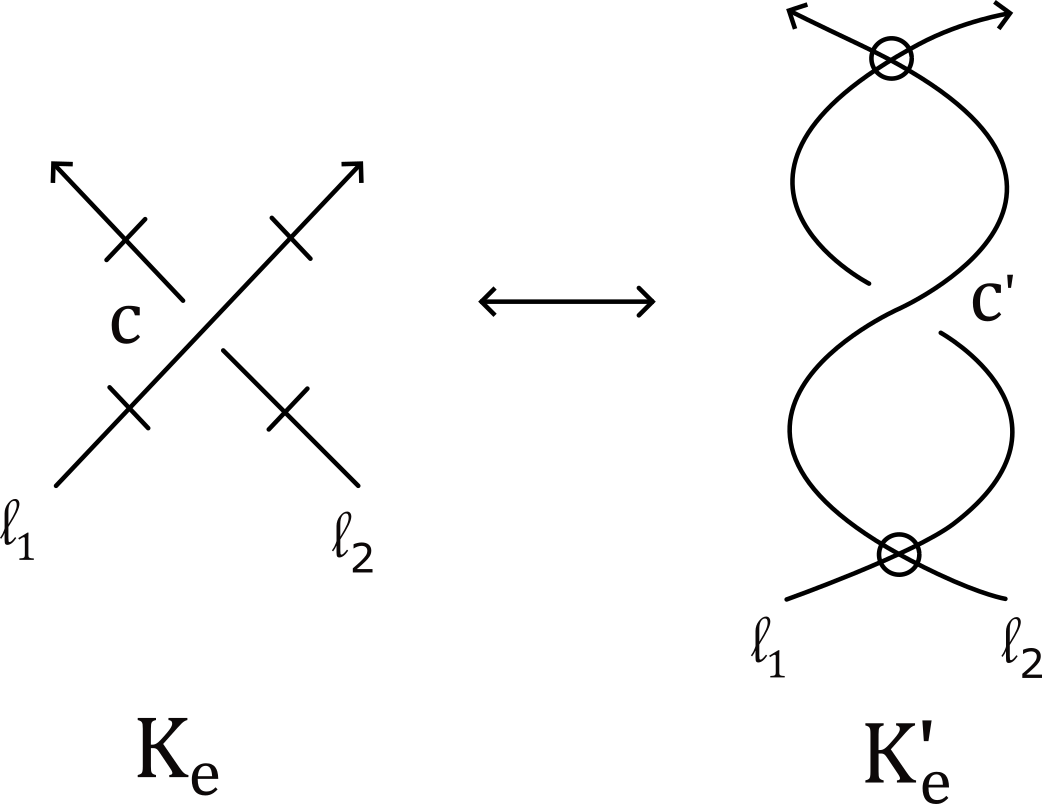}}
			\hspace{1.5cm}
	\subfloat[\label{fig:t3c}]{\includegraphics[width=0.31\textwidth]{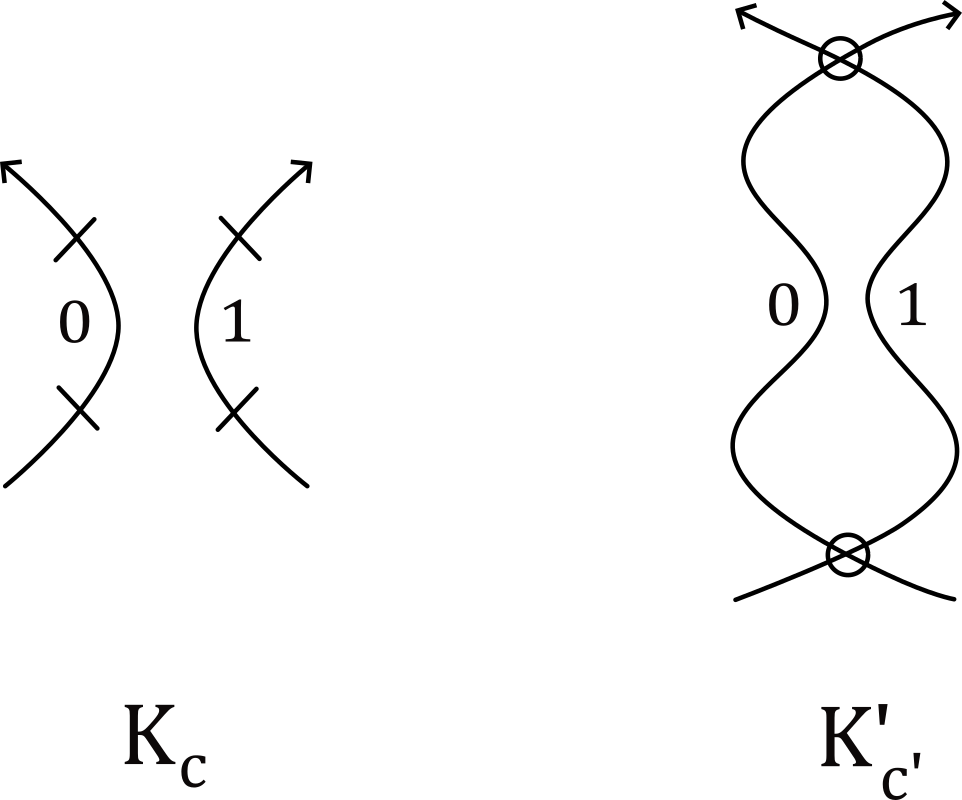}}
\caption{Smoothing diagrams of $K$ and $K'$. }
	\label{fig:t3fig2}
\end{figure}
	We can see from Fig.~\ref{fig:t3c} that the components of $K_{c}$ and $K'_{c}$  are the same outside the region where the $T3a$ move is applied. Then, $$ind^{\,0}(c,K)=ind^{\,1}(c^{\prime},K'),\quad ind^{\,1}(c,K)=ind^{\,0}(c^{\prime},K').$$

Since  $K$ and $K'$ are twisted knot diagrams,  $$ind^{\,0}(c,K)=-ind^{\,1}(c,K),\quad ind^{\,0}(c',K')=-ind^{\,1}(c',K').$$ 
	 Then, $ind^{\,0}(c,K)=-ind^{\,0}(c',K')$
	which implies $c \in ZC(K)$ if and only if $c' \in ZC(K')$.
	Therefore, for $e \in ZC(K)$, $z(e)$ do not change. Also for all $e \in C(K) \setminus \{c\} = C(K') \setminus \{c^{\prime}\}$, $p^{l}(e)$, $l \in \{0,1\})$ do not change under $T3a$ move.
	
	 For $c \in ZC(K)$, $z(c) = z(c')$ and
	\[ p^{0}(c) = p^{1}(c'),\quad p^{1}(c) = p^{0}(c'). \]
Since  $sgn(c)= sgn(c')$, we have $Q^{z}_{K}(s,t)= Q^{z}_{K'}(s,t)$.

Similarly, we can show that $Q^{z}_{K}(s,t)$ is invariant under $T3c$ move.
\end{proof}
 By Lemmas~\ref{lem:c1inv}, \ref{lem:c2inv}, \ref{lem:c3inv}, \ref{lem:vinv}, and \ref{lem:t3inv}, we can conclude the following result.
\begin{theorem}
	$Q^{z}_{K}(s,t)$ is invariant under all classical, virtual and twisted Reidemeister moves.
\end{theorem}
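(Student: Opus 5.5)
The plan is to reduce the statement to Theorem~\ref{th:gentw} and then assemble the lemmas already proved. By Theorem~\ref{th:gentw}, any two diagrams of the same oriented twisted knot are related by planar isotopy and a finite sequence of moves from
\[G=\{C1a,C1b,C2a,C3a,V1a,V2a,V3a,V4g,T1a,T2a,T3a,T3c\}.\]
So it suffices to show that $Q^{z}_{K}(s,t)$ is unchanged by planar isotopy and by each move in $G$. Invariance then follows by induction on the length of the sequence of moves. Every other oriented version of a classical, virtual or twisted move (for example $T2b$, $T3b$, $T3d$, $C2b$, $C3b$, $V4a$) is a composite of moves in $G$ by Lemmas~\ref{lem:t2b}, \ref{lem:t3b}, \ref{lem:t3d} and the results of \cite{Dan} and \cite{Polyak}. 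Hence those moves preserve $Q^z_K(s,t)$ automatically.

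The first step is planar isotopy. It changes none of the following data: the set $C(K)$, the signs, the over/under information along each smoothing component, the bars on each component, or the labelling of the components of $K_c$. So $ind^{\,l}$, membership in $ZC(K)$, $z(c)$ and $p^{l}(c)$ are all unchanged.

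The second step is the individual moves, taken in turn.
\begin{enumerate}
\item $C1a$ and $C1b$ are handled by Lemma~\ref{lem:c1inv}.
\item $C2a$ is handled by Lemma~\ref{lem:c2inv}.
\item $C3a$ is handled by Lemma~\ref{lem:c3inv}, which uses Lemmas~\ref{lem:r31}, \ref{lem:r32} and \ref{lem:r33}.
\item $V1a$, $V2a$, $V3a$ and $V4g$ are handled by Lemma~\ref{lem:vinv}.
\item $T3a$ and $T3c$ are handled by Lemma~\ref{lem:t3inv}.
\end{enumerate}

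The moves $T1a$ and $T2a$ need a short separate check. They create no classical crossings, so $C(K)$, the signs and all the indices are unchanged. The point to verify is that the parities $p^{l}(c)$ also survive. $T2a$ adds or removes two adjacent bars on a single arc. That arc lies in one component of each $K_c$, so the parity of bars on each component is preserved. $T1a$ trades a virtual kink for a bar, but the affected region lies in a single arc, which again belongs to one component of $K_c$. So $p^{l}(c)$ is preserved for every $c$ and $l$.

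I expect the main obstacle to be the $T1$ check. One has to confirm from the figure that the number of bars in $T1a$ changes by an even amount, or else changes on a portion that cannot split between the $0$- and $1$-components. If the local picture instead shows a single bar being created, I would argue as follows: the kink contains no classical crossing, so the local arc stays inside one smoothing component. I would then re-examine how $p^{l}$ is affected, possibly via the symmetry $p^0(c)+p^1(c)\equiv$ total number of bars. Once that is settled, the theorem follows by combining the cited lemmas with Theorem~\ref{th:gentw}.
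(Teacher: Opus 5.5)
Your overall route is the paper's. You reduce to the generating set $G$ via Theorem~\ref{th:gentw} and assemble Lemmas~\ref{lem:c1inv}, \ref{lem:c2inv}, \ref{lem:c3inv}, \ref{lem:vinv} and \ref{lem:t3inv]}. Checking $T1a$ and $T2a$ separately is a real improvement. The paper dismisses them only by noting that they involve no classical crossings, which says nothing about the bar parities $p^{l}(c)$ on which $Q^{z}_{K}(s,t)$ also depends.

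The step that fails is your treatment of $T1a$. Suppose a move really created or deleted a \emph{single} bar on an arc. Then the fact that this arc lies in one component of $K_c$ means exactly that this component's bar parity flips. So some $p^{l}(c)$ would change, and the $c$-term of $Q^{z}_{K}(s,t)$ would in general change. The congruence $p^{0}(c)+p^{1}(c)\equiv b \pmod 2$, with $b$ the total number of bars, only records this flip and cannot repair it. Hence ``the region lies in a single arc, so $p^{l}(c)$ is preserved'' is a non sequitur.

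No twisted move does this: a move ``virtual kink $\leftrightarrow$ bar'' combined with $V1$ would erase every bar and collapse twisted knots to virtual knots. What you need, and what is true, is that $T1a$ and $T2a$ change the number of bars on each local strand by an even number.

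\begin{enumerate}
\item Your two-adjacent-bars argument is the right check for the bar-cancellation move. That move is $T1$, not $T2$.
\item $T2$ carries bars along their own strands past a virtual crossing. This is why Lemma~\ref{lem:t2b} needs virtual moves to pass from $T2a$ to $T2b$. For this move, note that every bar stays on its own strand. Each local strand lies in a single component of $K_c$ for every $c\in C(K)$, because $c$ is outside the local picture. Signs and over/under data are untouched, so $ind^{\,l}$, $ZC(K)$, $z$ and $p^{l}$ are all unchanged.
\end{enumerate}

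With this correction your proof is complete and coincides with the paper's.
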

\section{Examples}\label{sec:examples}
In this section, we provide some examples of twisted knots that cannot be distinguished by the $Q$-polynomial but are distinguishable by the $Q^{z}$-polynomial.

	\begin{figure}[htbp]
	\centering
	\subfloat{\includegraphics[width=0.2\textwidth]{ex1}}
	\hspace{1.5cm}
	\subfloat{\includegraphics[width=0.14\textwidth]{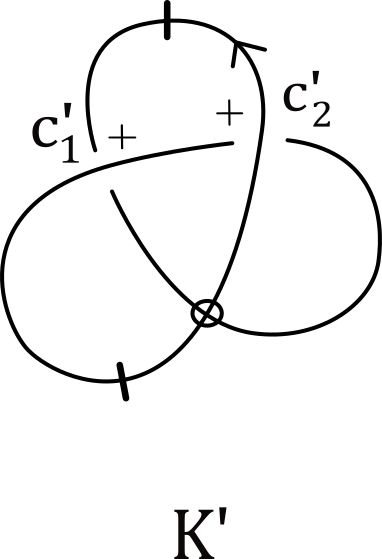}}
	\caption{}
	\label{fig:ex2}
\end{figure}

\begin{example}\label{ex:vasdia}
	For both twisted knot diagrams $K$ and $K'$ in Fig.~\ref{fig:ex2}, $Q(s,t)= (s-1)^{2}+(st-1)^{2}.$
	 But, for $K$ (See  Example~\ref{ex:ex1}), we have
	$$Q_{K}^{z}(s,t)=3(s-1)^{2}+(st-1)^{2},$$
	and for $K'$ (See Table~\ref{tab:ex2K}), we have
	$$Q_{K'}^{z}(s,t)=(s-1)^{2}+(st-1)^{2}.$$
		\begin{table}[htbp]
			\centering
			\small
			\begin{tabular}{|c|c|c|c|c|c|} 
				\hline
				$c \in C(K')$ & $sgn(c)$ & $ind^{\,0}(c,K')$  & $p^{0}(c)$ & $p^{1}(c)$ \\ 
				\hline 
				$c'_{1}$ & $+1$ & $1$ & $0$ & $0$ \\ 
				\hline
				$c'_{2}$ & $+1$ & $-1$ &$1$ & $1$ \\
				\hline
			\end{tabular}
				\caption{The values of $ind^{\,0}(c,K'), p^{0}(c), p^{1}(c)$ for all crossings of $K'$.}
			\label{tab:ex2K}
		\end{table}

\end{example}
	\begin{figure}[htbp]
	\centering 
	\includegraphics[width=\textwidth]{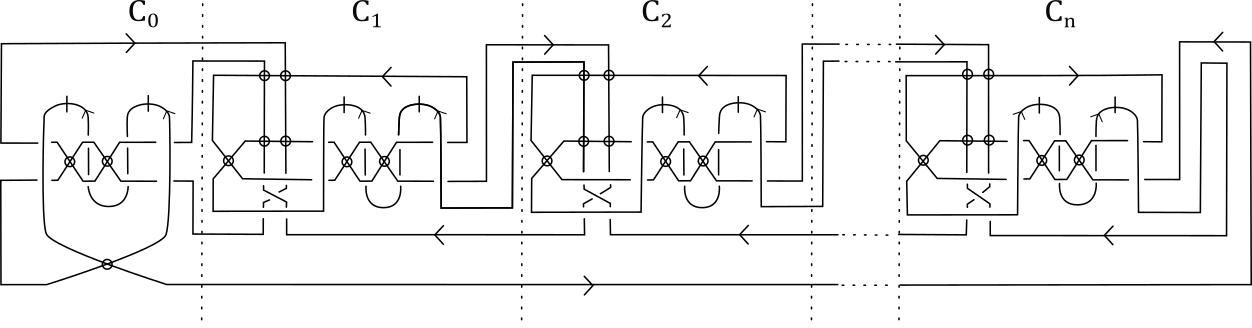}
	\caption{The twisted knot diagram $T_n$ for general $n$.}
	\label{fig:reggor1}
\end{figure}
\begin{example}
Here we provide family of twisted knot diagrams $\{T_n,\, n \in \mathbb{N}\}$ which cannot be distinguished by the $Q$-polynomial. Now, we compute the $Q^{z}$-polynomial of $T_n$ for each $n \in \mathbb{N}$ 
and prove that it can distinguish the knots in the family $T_n,\, n \in \mathbb{N}$  (See Fig.~\ref{fig:reggor1},\ref{fig:regblock}).

The $Q$-polynomial is $(st-1)^{2}-(t-1)^{2}$ for all $n \in \mathbb{N}$.
But,
 for $n \in  \mathbb{N}$,
 \[Q^{z}_{T_n}(s,t)= (n+2)(st-1)^{2}+(n+3)(s-1)^{2}-(n+2)(t-1)^{2}.
\]
See Table~\ref{tab:nonzero}, Table~\ref{tab:zero}.

\begin{figure}[htbp]
	\centering 
	\includegraphics[width=\textwidth]{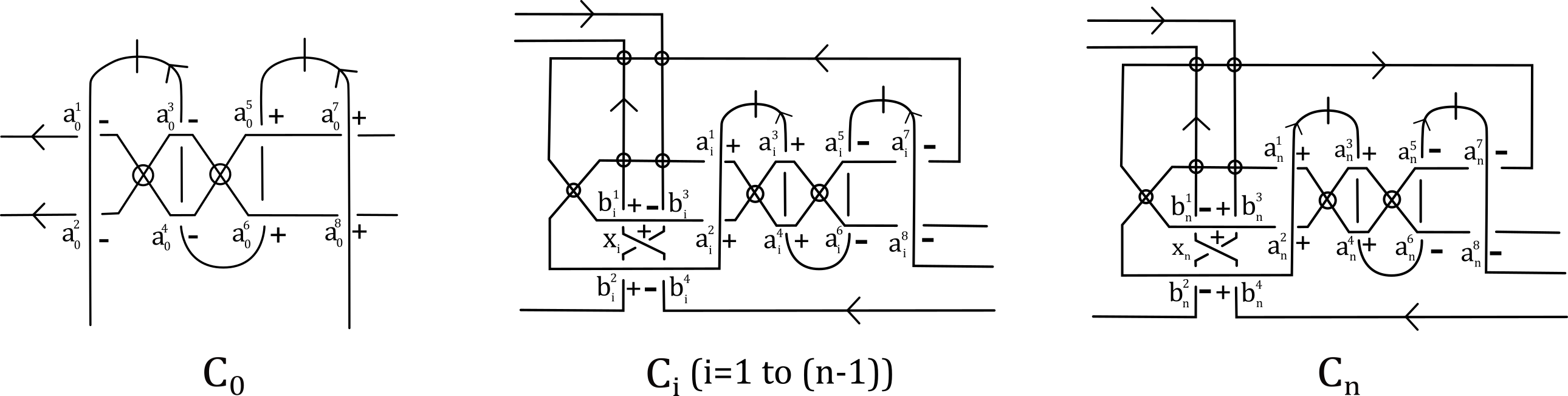}
	\caption{The repeating building blocks used to construct the twisted knot diagrams $T_n$, $n \in \mathbb{N}$.}
	\label{fig:regblock}
\end{figure}
\begin{table}[H]
	\centering
	\resizebox{0.5\textwidth}{!}{
	\begin{tabular}{ |c|c|c|c|} 
		\hline
		crossings $c$ & $sgn(c)$ & $p^{0}(c)$ & $p^{1
		}(c)$ \\  
		\hline
		$a_{0}^{1}$ & $-1$   & $0$ & $0$ \\ 
		$a_{0}^{2}$ & $-1$  & $0$ & $0$ \\
		$a_{0}^{5}$ &$+1$ &  $1$ & $1$ \\         
		$a_{0}^{7}$ & $+1$ & $0$ & $0$ \\
		
		$a_{0}^{8}$ & $+1$  & $0$ & $0$ \\
		\hline
		$a_{i}^{1},\, i=1,\cdots,(n-1)$ & $+1$  & $0$ & $0$ \\ 
		$a_{i}^{3},\, i=1,\cdots,(n-1)$ & $+1$ &  $1$ & $1$ \\
		$a_{i}^{5},\, i=1,\cdots,(n-1)$ &$-1$ & $1$ & $1$ \\
		$a_{i}^{7},\, i=1,\cdots,(n-1)$ & $-1$ & $0$ & $0$ \\
		\hline
		$a_{n}^{1}$ & $+1$ & $0$ & $0$ \\ 
		$a_{n}^{3}$ & $+1$  & $1$ & $1$ \\
		$a_{n}^{5}$ &$-1$  & $1$ & $1$ \\
		$a_{n}^{7}$ & $-1$ & $0$ & $0$ \\
		\hline
	\end{tabular}}
	\caption{The values of $p^{0}(c), p^{1}(c)$ for all crossings with odd affine index in $T_n,\, n \in \mathbb{N}$.}
	\label{tab:nonzero}
\end{table}

\begin{longtable}{|c|c|c|c|c|c|}
	\hline
	crossings $c$ & $sgn(c)$ & $zind^{\,0}(c)$ & $z(c)$ & $p^{0}(c)$ & $p^{1}(c)$ \\ 
	\hline
	\endfirsthead
	
	\hline
	crossings $c$ & $sgn(c)$ & $zind^{\,0}(c)$ & $z^{0}(c)$ & $p^{0}(c)$ & $p^{1}(c)$ \\
	\hline
	\endhead
	
	\hline
	\multicolumn{6}{r}{{Continued...}} \\
	\endfoot
	
	\hline
	\addlinespace
	\caption{$z(c), p^{0}(c), p^{1}(c)$ for all zero crossings in $T_n,\, n \in \mathbb{N}$.} \label{tab:zero}
	\endlastfoot
	
	$a_{0}^{3}$ & $-1$ & $0$ & $0$ & $1$ & $1$ \\ 
	$a_{0}^{4}$ & $-1$ & $0$ & $0$ &$1$ & $1$ \\
	$a_{0}^{6}$ &$+1$ & $-1$&  $1$ & $1$ & $1$ \\ 
	\hline
	$a_{i}^{2},\, i=1,\cdots,(n-1)$ & $+1$ & $+1$ & $1$ & $0$ & $0$ \\ 
	$a_{i}^{4},\, i=1,\cdots,(n-1)$ & $+1$ & $-1$ & $1$ & $1$ & $1$ \\
	$a_{i}^{6},\, i=1,\cdots,(n-1)$ &$-1$ & $0$&  $0$ & $1$ & $1$ \\
	$a_{i}^{8},\, i=1,\cdots,(n-1)$ & $-1$ &  $0$ & $0$ & $0$ & $0$ \\ 
	\hline
	$a_{n}^{2}$ & $+1$ & $-1$ & $1$ & $0$ & $0$ \\ 
	$a_{n}^{4}$ & $+1$ & $+1$ & $1$ &$1$ & $1$ \\
	$a_{n}^{6}$ &$-1$ & $0$&  $0$ & $1$ & $1$ \\
	$a_{n}^{8}$ &$-1$ & $0$&  $0$ & $0$ & $0$ \\ 
	\hline
	$b_{1}^{1}$ & $+1$ & $-1$ & $1$ & $0$ & $0$ \\
	$b_{1}^{2}$ & $+1$ & $-1$ & $1$ & $0$ & $0$ \\
	$b_{1}^{3}$ & $-1$ & $-2$ & $0$ &  $0$ & $0$ \\
	$b_{1}^{4}$ & $-1$ & $0$ & $0$ &  $0$ & $0$ \\ 
	\hline
	$b_{i}^{1},\, i=2,\cdots,(n-1)$ & $+1$ & $-1$ & $1$ & $0$ & $0$ \\ 
	$b_{i}^{2},\, i=2,\cdots,(n-1)$ & $+1$ & $0$ & $0$ & $0$ & $0$ \\
	$b_{i}^{3},\, i=2,\cdots,(n-1)$ &$-1$ & $-1$&  $1$ & $0$ & $0$ \\
	$b_{i}^{4},\, i=2,\cdots,(n-1)$ & $-1$ &  $0$ & $0$ & $0$ & $0$ \\ 
	\hline
	$b_{n}^{1}$ & $-1$ & $+1$ & $1$ & $0$ & $0$ \\
	$b_{n}^{2}$ & $-1$ & $0$ & $0$ & $0$ & $0$ \\
	$b_{n}^{3}$ & $+1$ & $+1$ & $1$ &  $0$ & $0$ \\
	$b_{n}^{4}$ & $+1$ & $0$ & $0$ &  $0$ & $0$ \\
	\hline
	
	$x_{1}$ &$+1$ & $-1$&  $1$ & $0$ & $0$ \\
	$x_{i}\,, i=2,\cdots,n-1$ & $+1$ &  $0$ & $0$ & $0$ & $0$ \\
\end{longtable}
\end{example}

\section{Properties of the  \texorpdfstring{$Q^{z}$}{Qz}-polynomial}\label{sec:operations}

\begin{theorem}
	Let $K$ be a twisted knot diagram. Then
	\begin{enumerate}
		\item If $-K$ is the inverse of $K$ then \[Q^{z}_{-K}(s,t)=Q^{z}_{K}(s,t).\]
		\item If $K^{*}$ is the mirror image of $K$ then \[Q^{z}_{K^{*}}(s,t)=-Q^{z}_{K}(s,t).\]
	\end{enumerate}
\end{theorem}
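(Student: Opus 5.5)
The plan is to compare $K$ with $-K$ (resp.\ $K^{*}$) one crossing at a time. Both operations give a natural bijection $C(K)\to C(-K)$ (resp.\ $C(K^{*})$), $c\mapsto \bar c$. For each crossing I will track four quantities: $sgn$, the affine index parities, the zero affine index parity $z$, and the bar parities $p^{l}$. Then I will compare the summands of $Q^{z}$ term by term.

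\emph{Inverse.} Reversing the orientation of the knot reverses both strands at every classical crossing, so $sgn(\bar c)=sgn(c)$. The oriented smoothing at $\bar c$ produces the same two curves as at $c$, now with reversed orientations. Because of the labelling convention in Fig.~\ref{fig:smoothing}, the labels $0$ and $1$ may be interchanged. Traversing a component backwards meets the same multiset of crossings $\gamma_i$. Each $\gamma_i$ keeps its sign, and it remains an over- (resp.\ under-) crossing on that component. Hence $ind^{\,l}(\bar c,-K)$ equals $ind^{\,l}(c,K)$ or $ind^{\,1-l}(c,K)$, depending on whether the labels swap. In either case the parity is unchanged, since $ind^{\,0}\equiv ind^{\,1}\pmod 2$ for a knot, as in the proof of Lemma~\ref{lem:t3inv}. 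Therefore $ZC(-K)=\{\bar c: c\in ZC(K)\}$. Applying the same observation to the sum over zero crossings gives $z(\bar c)=z(c)$. The bar counts on the two components are unchanged, so $\{p^{0}(\bar c),p^{1}(\bar c)\}=\{p^{0}(c),p^{1}(c)\}$. Each summand is symmetric in $p^{0},p^{1}$, so the summands agree and $Q^{z}_{-K}=Q^{z}_{K}$.

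\emph{Mirror.} I take $K^{*}$ to be the diagram obtained by switching over/under at every classical crossing, keeping bars and virtual crossings fixed. Then $sgn(\bar c)=-sgn(c)$. The oriented smoothing gives the same two oriented components, again possibly with the labels swapped. For every $\gamma_i$ on a component, both $sgn(\gamma_i)$ and $\epsilon_i$ flip, so $(-1)^{\epsilon_i}sgn(\gamma_i)$ is unchanged. Hence the affine indices, and therefore $ZC$ and the zero affine indices, are preserved up to the possible relabelling. Parities are preserved, so $z(\bar c)=z(c)$. The bar parities are also unchanged up to swapping. Each summand therefore changes only by the factor $sgn(\bar c)/sgn(c)=-1$, which gives $Q^{z}_{K^{*}}=-Q^{z}_{K}$. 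If one instead defines the mirror as a reflection of the plane, the same local computation applies: signs flip, and each $(-1)^{\epsilon_i}sgn(\gamma_i)$ changes at most by a global sign, which does not affect parity.

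\emph{Main obstacle.} The delicate step is the bookkeeping in Fig.~\ref{fig:smoothing}. I must check which smoothing component receives label $0$ after the operation, and that over/under status along a component is read correctly after reversal. I will handle this by never needing the labels: every quantity that enters $Q^{z}$ is either a parity, which is the same on both components for a knot, or appears symmetrically in $p^{0},p^{1}$. So it suffices to verify the statements up to swapping $l\leftrightarrow 1-l$.
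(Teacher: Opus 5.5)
Your proposal is correct and follows essentially the same route as the paper: a crossing-by-crossing bijection under which $sgn$ is preserved (for $-K$) or negated (for $K^{*}$), membership in $ZC$ and the parity $z$ are preserved, and the bar parities are preserved up to the swap $p^{0}\leftrightarrow p^{1}$, so the summands match (up to a global sign for the mirror). The only difference is that the paper asserts outright that the labels $0,1$ are interchanged, whereas you avoid relying on the labelling convention by using $ind^{\,0}\equiv ind^{\,1}\pmod 2$ and $z^{0}=z^{1}$, together with the symmetry of each summand in $p^{0},p^{1}$; your observation that each term $(-1)^{\epsilon_i}sgn(\gamma_i)$ is unchanged under mirroring is also slightly more careful than the paper's displayed sign relation, although only parities matter there.
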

\begin{proof}
\begin{enumerate}
	\item Let $c'$ be the crossing of $-K$ corresponding to the crossing $c$ in $K$. Note that the component labels of $K_{c}$ are switched in $-K_{c'}$. Then for all $c \in K$, $$ind^{\,0}(c,K)=ind^{\,1}(c',-K)= -ind^{\,0}(c',-K)$$ which implies $c \in ZC(K)$ if and only if $c' \in ZC(-K)$. Then
	 \[z^{\,0}(c)=z^{\,1}(c'), \quad z^{\,1}(c)=z^{\,0}(c')\quad \text{when} \,\, c \in ZC(K)  .\]

	Moreover, $$p^{0}(c)=p^{1}(c')\quad \text{and}\, p^{1}(c)=p^{0}(c')$$ for all $c \in C(K)$ and $c' \in C(-K)$. Since  $sgn(c) = sgn(c')$, we have $$Q^{z}_{-K}(s,t)=Q^{z}_{K}(s,t).$$
	
	\item Let $c'$ be the crossing of $K^{*}$ corresponding to the crossing $c$ in $K$. Note that the crossing information at every crossing $c \in C(K)$ are switched from over (resp. under) to under (resp. over), as well as labels on the components are switched from $K_{c}$ to $K^{*}_{c'}$. Then for all $c \in C(K)$ and $c' \in C(K^{*})$, we have  $$ind^{\,0}(c,K)=-ind^{\,1}(c',K^{*})= -ind^{\,0}(c',K^{*})$$ which implies $c \in ZC(K)$ if and only if $c' \in ZC(K^{*})$. Then
\[z(c)=z(c') \text{when} \,
	c \in ZC(K).\]
	
	Moreover, $$p^{0}(c)=p^{1}(c')\quad \text{and}\, p^{1}(c)=p^{0}(c')$$ for all $c \in C(K)$ and $c' \in C(K^{*})$. Since  $sgn(c) = -sgn(c')$, we have  $$Q^{z}_{K^{*}}(s,t)=-Q^{z}_{K}(s,t).$$
\end{enumerate}
\end{proof}

\section{Applications of the \texorpdfstring{$Q^{z}$}{Qz}-polynomial}\label{sec:applications}
\subsection{Crossing change and Gordian distance}
Crossing change is an unknotting operation for classical knots i.e., any classical knot diagram can be transformed into a trivial knot diagram by a sequence of crossing changes. But the analogue statement for twisted knot diagrams may not hold. Some twisted knot diagrams cannot be transformed into a trivial twisted knot diagram by crossing change operation. Note that there are two trivial twisted knots, one without a bar and one with bar. Two twisted knot diagrams are said to be \textit{homotopic} if they can be related by a finite sequence of crossing changes and Reidemeister moves for twisted knots.

\begin{definition}
	Let $K$ and $K'$ be two homotopic twisted knots, then the {\it Gordian distance} denoted by $d_{G}(K,K')$ is the minimum number of crossing changes required to transform  a diagram of $K$ into a diagram of $K'$. 
\end{definition}

  In the following result we find the relation between the $Q^{z}$-polynomial for two twisted knots $K$ and $K'$ if they are related by one crossing change.

\begin{lemma}\label{lem:ccmove}
	Let $K$ and $K'$ be two twisted knot diagrams related by one crossing change move at a crossing $c$. When $c \notin ZC(K)$ we have,
	\[Q^{z}_{K}(s,t)-Q^{z}_{K'}(s,t) = 2\,sgn(c)(s\,t^{p^{0}(c)}-1)(s\,t^{p^{1}(c)}-1),\]
	and, when $c \in ZC(K)$ we have,
	\[Q^{z}_{K}(s,t)-Q^{z}_{K'}(s,t) = 2\,sgn(c)(s^{z(c)}t^{p^{0}(c)}-1)(s^{z(c)}t^{p^{1}(c)}-1).\]
\end{lemma}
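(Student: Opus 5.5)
The plan is to compare the two sums defining $Q^{z}_{K}(s,t)$ and $Q^{z}_{K'}(s,t)$ term by term. Identify $C(K)$ with $C(K')$, where $c\in C(K)$ corresponds to $c'\in C(K')$. I will show that every crossing $e\neq c$ contributes the same term to both polynomials. For $c$ itself, I will show that the two terms differ only by the sign of the crossing. The displayed formulas then follow at once: $sgn(c')=-sgn(c)$, so the difference of the $c$-terms is $2\,sgn(c)$ times the common product.

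\emph{Step 1: crossings $e\neq c$.} Look at the $l$-component of $K_e$ and its passage through $c$, when $c$ is a crossing between the two components of $K_e$. The crossing change switches that passage from over to under (or the reverse), so $\epsilon$ changes parity, and it also negates $sign(c)$. Hence the summand $(-1)^{\epsilon}sign(c)$ is unchanged. If instead $c$ is a self-crossing of the $l$-component, it contributes $0$ on both sides, as already noted after the definition of the affine index. All other crossings are untouched, so $ind^{\,l}(e,K)=ind^{\,l}(e,K')$ for $l\in\{0,1\}$. Consequently $e\in ZC(K)$ if and only if $e\in ZC(K')$. Since no bars move, $p^{l}(e)$ is unchanged as well.

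\emph{Step 2: the crossing $c$.} The oriented smoothing at $c$ and at $c'$ produces the same pair of curves. The only possible difference is which component is labelled $0$ and which is labelled $1$, because the labelling in Fig.~\ref{fig:smoothing} depends on over/under data and sign. These curves carry the same crossings, with the same over/under data, as before. So the multiset $\{ind^{\,0}(c),ind^{\,1}(c)\}$ is preserved, possibly with the two entries interchanged. This gives $c\in ZC(K)$ if and only if $c'\in ZC(K')$. Likewise $\{p^{0}(c),p^{1}(c)\}=\{p^{0}(c'),p^{1}(c')\}$, and the product $\bigl(s^{a}t^{p^0}-1\bigr)\bigl(s^{a}t^{p^1}-1\bigr)$ is symmetric in $p^0,p^1$.

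\emph{Step 3: the zero affine indices.} Return to Step 1. The set of zero crossings other than $c$ is unchanged, and the status of $c$ is unchanged by Step 2. The contribution of $c$ to $zind^{\,l}(e)$ is unchanged by the same parity argument as in Step 1. Therefore $z(e)$ is preserved for every $e\in ZC(K)\setminus\{c\}$. For $c$ itself, $zind^{\,l}(c)$ is computed from the other crossings, which are unchanged; any relabelling is harmless since $z^{0}(c)=z^{1}(c)$ by the Remark. Hence $z(c)=z(c')$.

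Combining the three steps, all terms cancel in $Q^{z}_{K}-Q^{z}_{K'}$ except the $c$-term. This gives the stated formula in each of the two cases $c\notin ZC(K)$ and $c\in ZC(K)$. The step I expect to need the most care is Step 2, namely checking against Fig.~\ref{fig:smoothing} exactly how the $0/1$ labels behave under the crossing change. The argument is designed to be insensitive to a label swap, since it only uses symmetry in the two components.
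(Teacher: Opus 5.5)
Your proposal is correct and follows essentially the same route as the paper. For every $e\neq c$, the index, zero-crossing status, $z(e)$ and bar parities are unchanged, because $c$ either contributes the same $(-1)^{\epsilon}sgn(c)$ or is a self-crossing. At $c$, the smoothing yields the same two curves with labels possibly swapped, so $z(c)=z(c')$, $\{p^0(c),p^1(c)\}$ is preserved, and only the sign flips. The only differences are that you check $z(e)$ and the symmetry of the product more explicitly, where the paper reads the label swap off its figure.
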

\begin{proof}
	Let $K'$ be obtained from $K$ by a crossing change operation at a crossing $c \in C(K)$, and let $c'$ be the corresponding crossing in $K'$, with $sgn(c')=-sgn(c)$ (Fig.~\ref{fig:ccmove}).
  \begin{figure}[htbp]
	\centering
	\subfloat[\label{fig:ccmove}]{\includegraphics[width=0.26\textwidth]{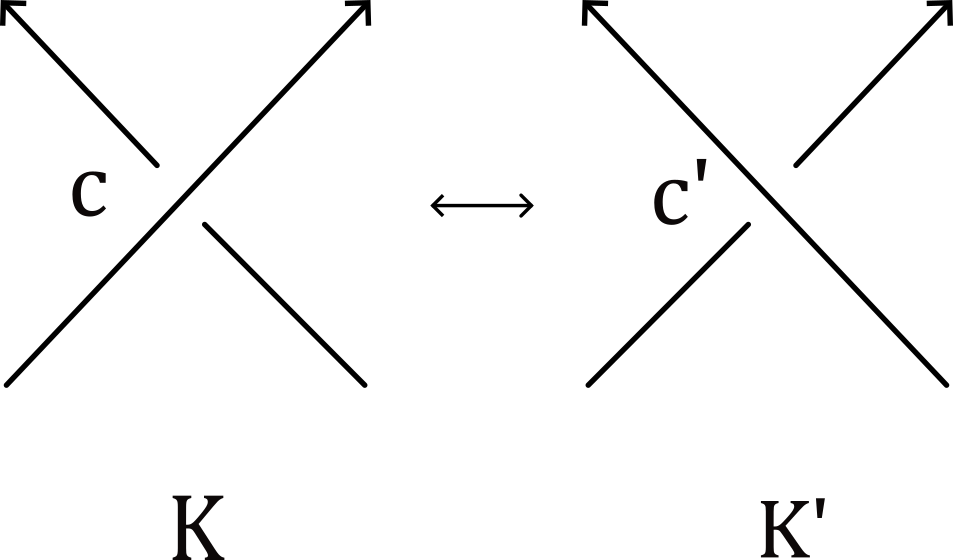}}
	\qquad
	\subfloat[\label{fig:ccsmc}]{\includegraphics[width=0.26
		\textwidth]{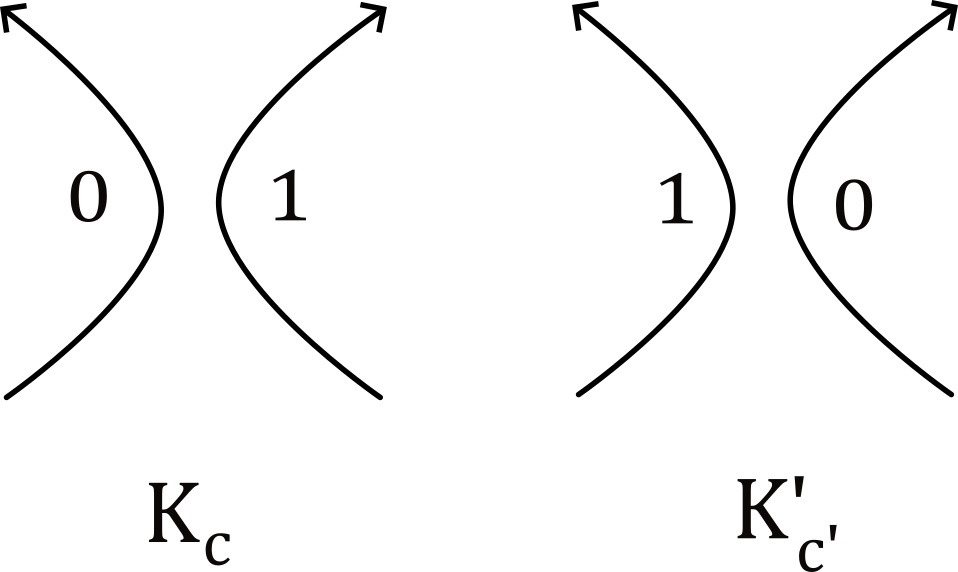}}
	\qquad
	\subfloat[\label{fig:ccsme}]{\includegraphics[width=0.28
		\textwidth]{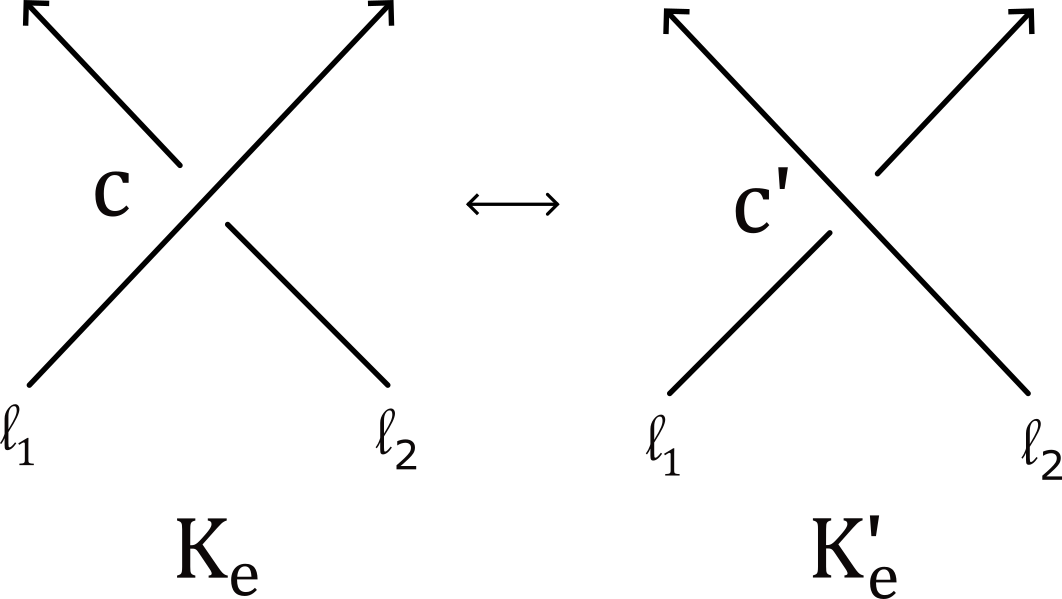}}
	\caption{Crossing change move at a crossing $c\in C(K)$ and the smoothing components.}
\end{figure}
	We can see from Fig.~\ref{fig:ccsmc} that the diagrams $K_{c}$ and $K'_{c'}$ have opposite labels on the components. Therefore, $$ind^{\,0}(c,K)= ind^{\,1}(c',K')\quad \text{and}\, ind^{\,1}(c,K)= ind^{\,0}(c',K').$$ Since  $K$ and $K'$ are twisted  knot diagrams, we have $$ind^{\,1}(c,K)=-ind^{\,0}(c,K)= -ind^{\,1}(c',K')= ind^{\,0}(c',K').$$ 
	Therefore,
	\[
	c\in ZC(K)
	\iff
	c^{\prime}\in ZC(K').
	\]
	
	Now, for all crossings $e \in C(K) \setminus \{c\}= C(K')\setminus \{c'\}$,  $K_{e}$ and $K'_{e}$ differ locally as shown in Fig.~\ref{fig:ccsme}. If $l_{1}\neq l_{2}$ then for $l \in \{0,1\}$,
	\[	|ind ^{\,l}(e,K)-ind ^{\,l}(e,K')|=|sgn(c)+sgn(c')|=0,\]
	and if $l_{1}= l_{2}$ then $K_{e}$ and $K'_{e}$ are identical. In both cases, we have  $e \in ZC(K)$ if and only if $e' \in ZC(K')$.
	Then we see that $z(e)$ do not change under $T3a$ move when $ e\in ZC(K)$.
Also $z(c) = z(c')$ when $c \in ZC(K)$.
Moreover, $p^{l}(e)$, $l \in \{0,1\}$ do not change for all $ e \in C(K)\setminus \{c\}= C(K')\setminus \{c'\}$. And
	\[p^{0}(c)=p^{1}(c'), \quad p^{1}(c)=p^{0}(c').\]
	Hence, when $c \in ZC(K)$ we have,
	\begin{align*}
		Q^{z}_{K}(s,t)-Q^{z}_{K'}(s,t) =&sgn(c)(s^{z(c)}t^{p^{0}(c)}-1)(s^{z(c)}t^{p^{1}(c)}-1)\\ &-sgn(c')(s^{z(c')}t^{p^{0}(c')}-1)(s^{z(c')}t^{p^{1}(c')}-1),\\
		=&sgn(c)(s^{z(c)}t^{p^{0}(c)}-1)(s^{z(c)}t^{p^{1}(c)}-1)\\ &+sgn(c)(s^{z(c)}t^{p^{1}(c)}-1)(s^{z(c)}t^{p^{0}(c)}-1),\\
		=& 2\,sgn(c)(s^{z(c)}t^{p^{0}(c)}-1)(s^{z(c)}t^{p^{1}(c)}-1).
	\end{align*}
and	when $c \notin ZC(K)$ we have,
	\begin{align*}
		Q^{z}_{K}(s,t)-Q^{z}_{K'}(s,t) =&sgn(c)(s\,\,t^{p^{0}(c)}-1)(s\,\,t^{p^{1}(c)}-1)\\ &-sgn(c')(s\,t^{p^{0}(c')}-1)(s\,t^{p^{1}(c')}-1),\\
		=&sgn(c)(s\,\,t^{p^{0}(c)}-1)(s\,\,t^{p^{1}(c)}-1)\\ &+sgn(c)(s\,\,t^{p^{1}(c)}-1)(s\,\,t^{p^{0}(c)}-1),\\
		=& 2\,sgn(c)(s\,\,t^{p^{0}(c)}-1)(s\,\,t^{p^{1}(c)}-1).
	\end{align*}
\end{proof}

\begin{theorem}\label{thm:homotopic}
	Let $K$ and $K'$ be two homotopic twisted knots. Then there exists $m
	\in \mathbb{Z}$ such that
	\begin{equation*}
	\begin{aligned}
		Q^{z}_{K}(s,t)-Q^{z}_{K'}(s,t) = &\sum_{i=1}^{m} 2\,a_{i}(s^{b_{i}}t^{p^{0}_{i}}-1)(s^{b_{i}}t^{p^{1}_{i}}-1),
	\end{aligned}
\end{equation*}
where  $a_{i}\in \{-1,1\}$, and $b_{i}, p^{0}_{i}, p^{1}_{i} \in \{0,1\}$, for $i = 1,\cdots, m$.
\end{theorem}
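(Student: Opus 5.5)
Since $K$ and $K'$ are homotopic, by definition there are diagrams $D$ of $K$ and $D'$ of $K'$ and a finite sequence
\[
D = D_0 \to D_1 \to \cdots \to D_N = D'
\]
in which each step is either a generalized Reidemeister move for twisted knots or a single crossing change. The plan is to telescope $Q^{z}_{D}(s,t)-Q^{z}_{D'}(s,t)=\sum_{j=1}^{N}\bigl(Q^{z}_{D_{j-1}}(s,t)-Q^{z}_{D_j}(s,t)\bigr)$ and then evaluate each summand.

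The summands of the first kind vanish. By the invariance theorem of Section~\ref{sec:invariance}, $Q^{z}$ is unchanged under every classical, virtual and twisted Reidemeister move. This relies on Theorem~\ref{th:gentw}: any oriented move is a composite of moves in the generating set $G$, and invariance was checked for those. So all Reidemeister steps contribute zero.

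For a crossing change step at a crossing $c$ of $D_{j-1}$, Lemma~\ref{lem:ccmove} gives the difference as $2\,sgn(c)\bigl(s^{b}t^{p^{0}(c)}-1\bigr)\bigl(s^{b}t^{p^{1}(c)}-1\bigr)$. Here $b=1$ if $c\notin ZC(D_{j-1})$ and $b=z(c)\in\{0,1\}$ otherwise. Set $a_i=sgn(c)\in\{-1,1\}$, $b_i=b$, and $p^{l}_i=p^{l}(c)\in\{0,1\}$, the latter values lying in $\{0,1\}$ by definition. Indexing the crossing-change steps $i=1,\dots,m$, where $m$ is their number ($m\ge 0$, so in particular $m\in\mathbb{Z}$), the telescoped sum is exactly the displayed formula.

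Finally, since $Q^{z}$ is an invariant, $Q^{z}_{K}=Q^{z}_{D}$ and $Q^{z}_{K'}=Q^{z}_{D'}$, which gives the statement. There is no real obstacle here. The only care needed is that Lemma~\ref{lem:ccmove} is applied to the intermediate diagram $D_{j-1}$ at that stage, so that the data $sgn(c)$, $z(c)$ and $p^{l}(c)$ are computed in $D_{j-1}$. The statement allows the coefficients to vary with $i$, so this is exactly what is required. One may also remark that taking a minimal such sequence gives $m=d_G(K,K')$, which is the form needed for the Gordian distance bounds that follow.
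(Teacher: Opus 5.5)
Your argument is correct. It differs from the paper's in how the homotopy is decomposed.

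The paper begins by asserting that there are diagrams $D$ of $K$ and $D'$ of $K'$ such that $D'$ is obtained from $D$ by crossing changes alone. It then applies Lemma~\ref{lem:ccmove} to the crossings $c_1,\dots,c_k$ of $D$ and cancels terms to reach $m\le k$. Homotopy is defined by sequences in which crossing changes and Reidemeister moves may be interleaved. So that first step is a rearrangement statement, and the paper does not justify it for twisted knots.

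Your telescoping along an arbitrary sequence $D_0\to\cdots\to D_N$ sidesteps this. The Reidemeister steps drop out by the invariance theorem, and each crossing-change term is evaluated in the current diagram $D_{j-1}$. This also spares you a point the paper leaves implicit: changing one $c_i$ does not affect the sign, zero-crossing status, $z$-value or bar parities of the remaining $c_j$. That fact does hold, by the proof of Lemma~\ref{lem:ccmove}.

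In return, the paper's formulation reads all the data off a single diagram. This matches the reading of $d_G(K,K')$ as the number of crossing changes performed on one diagram of $K$. Under that reading, a minimal interleaved sequence only guarantees $m\le d_G(K,K')$, not the equality $m=d_G(K,K')$ you state in your closing remark. That inequality is still all the Gordian-distance lower bound requires.
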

  \begin{proof}
  Since $K$ and $K'$ are homotopic, there exist diagrams $D$ and $D'$ of $K$ and $K'$ respectively such that $D'$ is obtained from $D$ by $k$ crossing-change operations.
  	Suppose these crossing changes are performed at the classical crossings
  	$c_1,c_2,\ldots,c_k$ of $K$.

 Without loss of generality, assume that the first $n(\leq k)$ number of crossings are zero crossings. Therefore, by Lemma~\ref{lem:ccmove} we can write
  	\begin{equation}\label{eq:dist}
  		\begin{aligned}
  			Q^{z}_{K}(s,t)-Q^{z}_{K'}(s,t) =& Q^{z}_{D}(s,t)-Q^{z}_{D'}(s,t),\\
  			& =\sum_{i=1}^{n} 2\,sgn(c_{i})(s^{z(c_{i})}t^{p^{0}(c_{i})}-1)(s^{z(c_{i})}t^{p^{1}(c_{i})}-1) \\
  			&+ \sum_{i=n+1}^{k} 2\,sgn(c_{i})(s\,t^{p^{0}(c_{i})}-1)(s\,t^{p^{1}(c_{i})}-1).
  		\end{aligned}
  	\end{equation}
  
  Some of these polynomial terms may cancel, and some terms in the first summand may be zero, so there exists an integer $m(\leq k)$ such that we can write
  	\begin{equation*}
  		\begin{aligned}
  			Q^{z}_{K}(s,t)-Q^{z}_{K'}(s,t) = &\sum_{i=1}^{m} 2\,a_{i}(s^{b_{i}}t^{p^{0}_{i}}-1)(s^{b_{i}}t^{p^{1}_{i}}-1),
  		\end{aligned}
  	\end{equation*}
  		where  $a_{i}\in \{-1,1\}$, and $b_{i}, p^{0}_{i}, p^{1}_{i} \in \{0,1\}$, for $i = 1, \cdots, m (\leq k)$. 
  		 \end{proof}
  		 \begin{remark}
  		 Since  each crossing change contributes exactly one term in Equation~\ref{eq:dist}, there are no more than $k$ terms with $|a_{i}|=1$. Therefore, $
  		 \sum_{i =1}^{m}|a_i|
  		 \le k.$
  		  Therefore, when we consider the minimal sequence of crossing change operations then we get
  		   \[\sum_{i=1}^{m}|a_i|
  		 \le d_{G}(K,K').\]
  		 This relation provides a lower bound on the Gordian distance of two homotopic twisted knots.
  		
  		 \end{remark}
 \begin{example}
	Let $K$ and $K'$ be two homotopic twisted knots. From Fig.~\ref{fig:cc_exm}, we get $d_{G}(K,K') \leq 2.$
	Now, 
	\begin{align*}
	Q^{z}_{K}(s,t)=& -2(st-1)^{2}-2(s-1)^2+2(t-1)^2,\\
\text{and}\quad	Q^{z}_{K'}(s,t)=&\phantom{-}2(t-1)^2.
	\end{align*}
	Therefore, \[Q^{z}_{K}(s,t)-Q^{z}_{K'}(s,t)=-2(st-1)^{2}-2(s-1)^{2}. \] By Theorem~\ref{thm:homotopic}, $d_{G}(K,K') \geq 2$. Hence $d_{G}(K,K'
	) = 2$.
\end{example}	
	
\begin{figure}[htbp]
	\centering 
	\includegraphics[width=\textwidth]{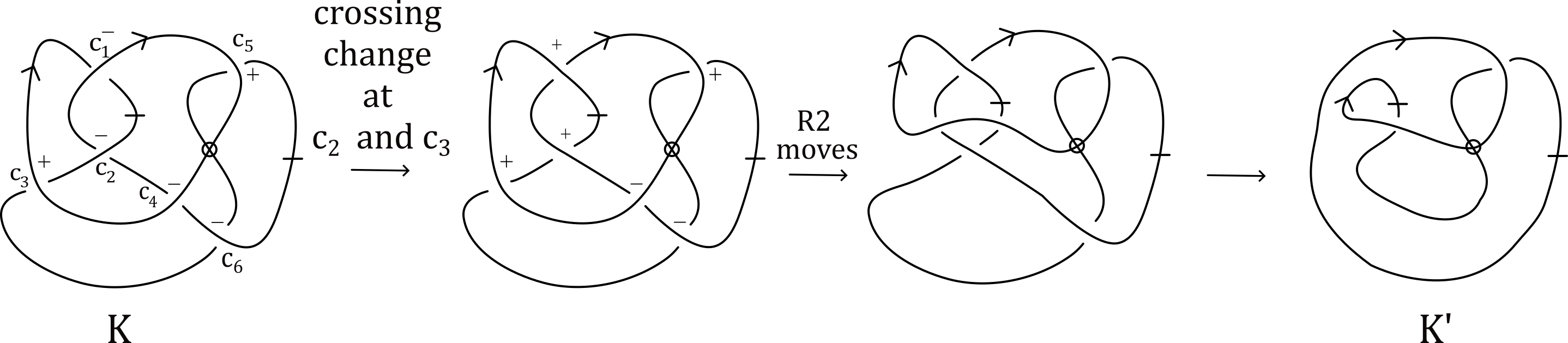}
	\caption{}
	\label{fig:cc_exm}
\end{figure}

 \begin{remark}
When a twisted knot $K$ is homotopic to one of the trivial twisted knot $K_{0}$, then the Gordian distance $d_{G}(K, K_{0})$ turns out to be the  unknotting number for the twisted knot $K$, denoted by $u(K)$. See Example~\ref{rem:unknot}.
 \end{remark}
 
  \begin{example}\label{rem:unknot}
  	Let $K$ be the twisted knot diagram in the Fig.~\ref{fig:gor1} which is homotopic to the trivial twisted knot $K_0$ without a bar. Now, $Q^{z}_{K}(s,t)= -2(st-1)^{2}$. Compare this expression with $Q^{z}_{K_{0}}(s,t) = 0$. By Theorem~\ref{thm:homotopic}, $d_{G}(K,K_{0}) \geq 1$. Also, from the Fig.~\ref{fig:gor2}, we see that $d_{G}(K,K_{0}) \leq 1$. Hence $u(K)= d_{G}(K,K_{0}) = 1$.
  		\begin{figure}[htbp]
  		\centering 
  		\includegraphics[width=0.18\textwidth]{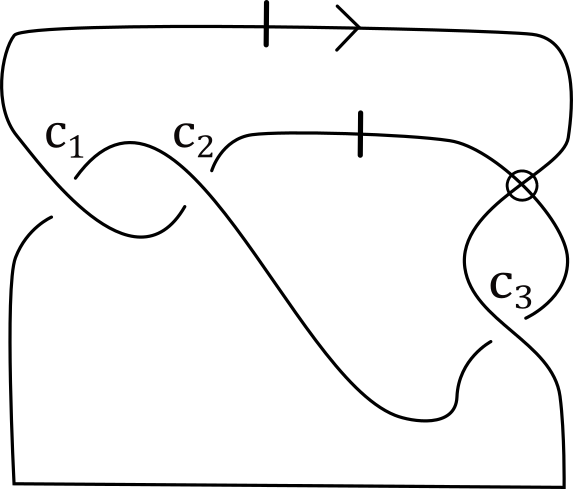}
  		\caption{A non-trivial twisted knot diagram $K$.}
  		\label{fig:gor1}
  	\end{figure}
  		\begin{figure}[htbp]
  		\centering 
  		\includegraphics[width=0.9\textwidth]{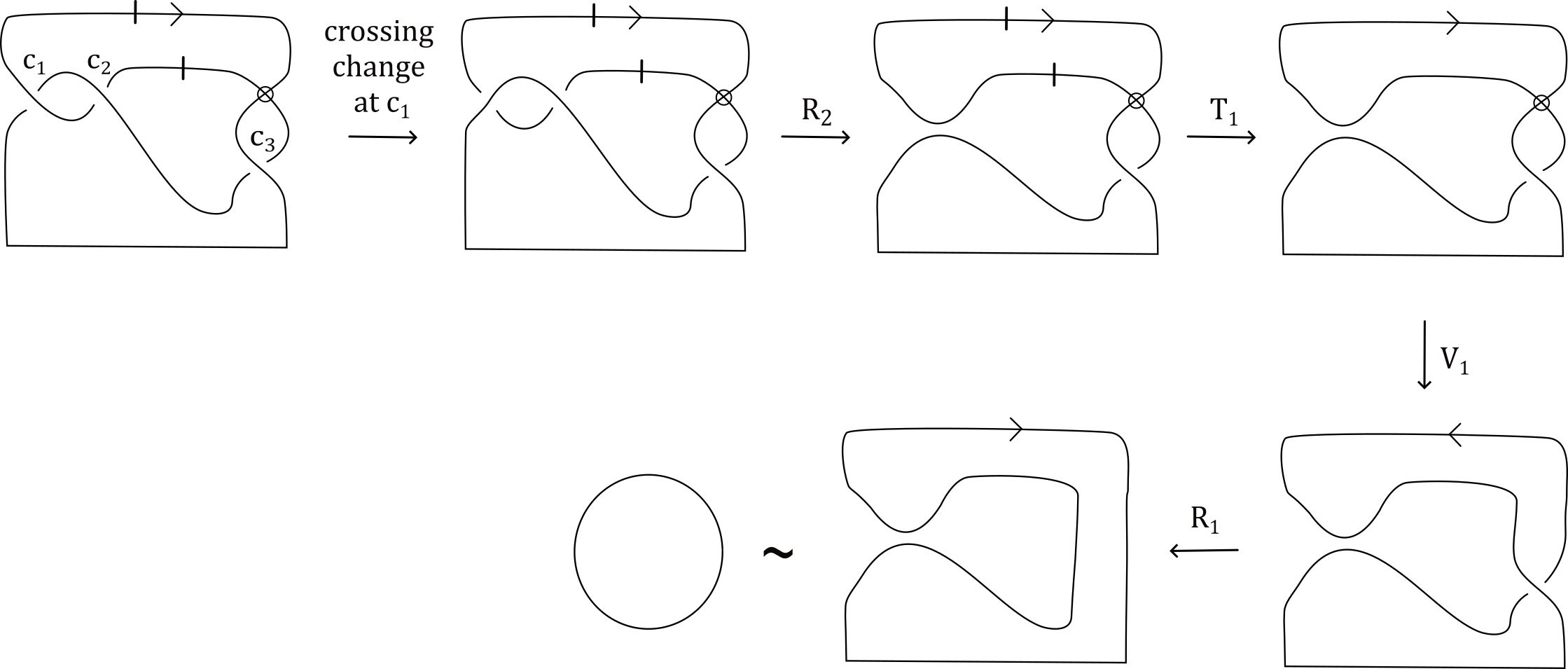}
  		\caption{A sequence of one crossing change and generalized Reidemeister moves transforming $K$ into the trivial twisted knot $K_0$, showing $d_G(K,K_0)\le 1$.}
  		\label{fig:gor2}
  	\end{figure}
  \end{example}
  \begin{remark}
  	There are two distinct homotopy classes for two trivial twisted knots. Moreover, there are more than two homotopy classes for twisted knots. 
  	 For $K$ and $K'$,
  	\[Q_{K}^{z}(s,t)=(t-1)^{2}, \quad Q_{K'}^{z}(s,t)=(s-1)^{2}+(st-1)^{2}.   \]
  	Therefore, by computing the $Q^z$-polynomial we can see that $K$ and $K'$ in Fig.~\ref{fig:hmcls} are not homotopic to a trivial twisted knot diagram with or without bar. And by Theorem~\ref{thm:homotopic}, $K$ and $K'$ are not homotopic. 
  	\begin{figure}[htbp]
  		\centering 
  		\includegraphics[width=0.4\textwidth]{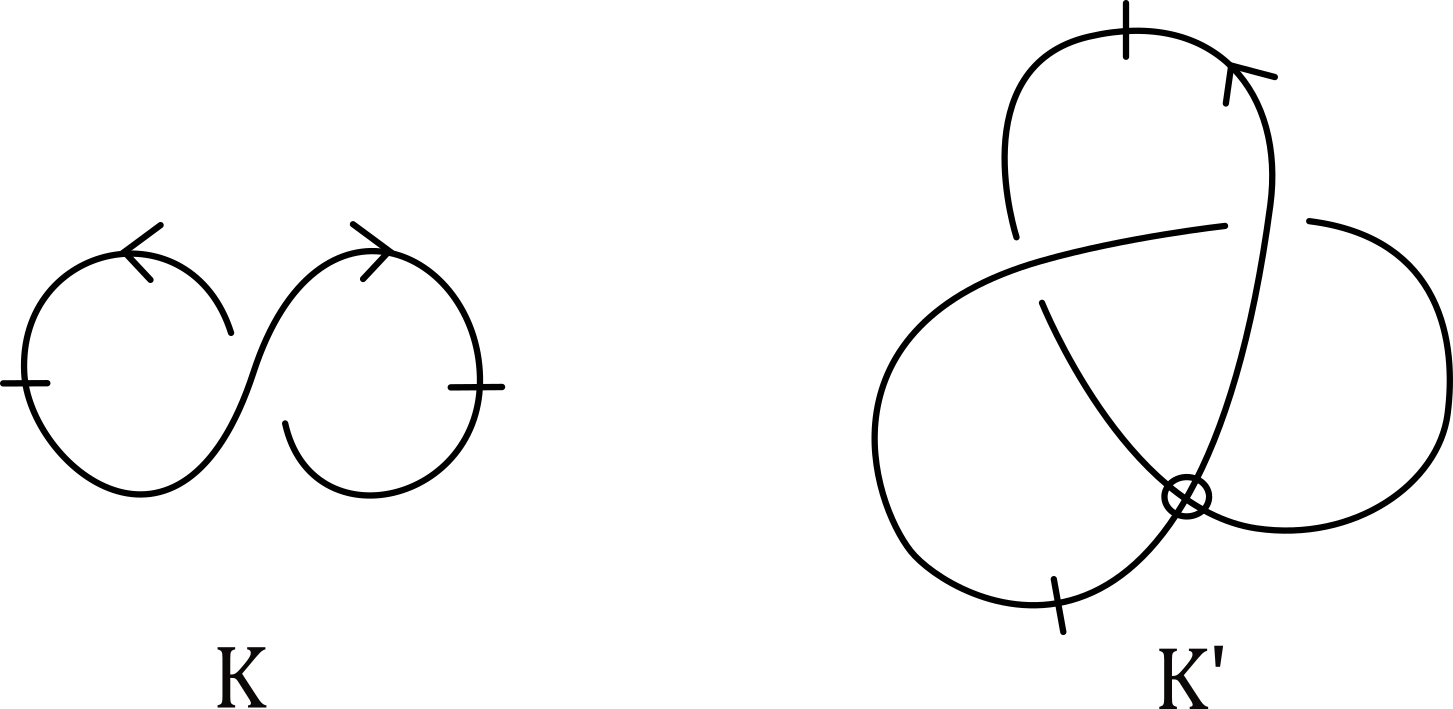}
  		\caption{}
  		\label{fig:hmcls}
  	\end{figure}
  \end{remark}
  \subsection{Cosmetic crossing  change conjecture}
A crossing in a knot diagram is nugatory if there is a circle in the plane of the diagram that intersects the diagram transversely at that single crossing and does not have any other intersection points with the diagram. In Fig.~\ref{fig:nug}, the dotted circles show the nugatory crossings in the knot diagram. Changing this crossing does not change the knot type. A non-nugatory crossing whose change does not change the knot type is called a cosmetic crossing.
	\begin{figure}[htbp]
	\centering 
	\includegraphics[width=0.3\textwidth]{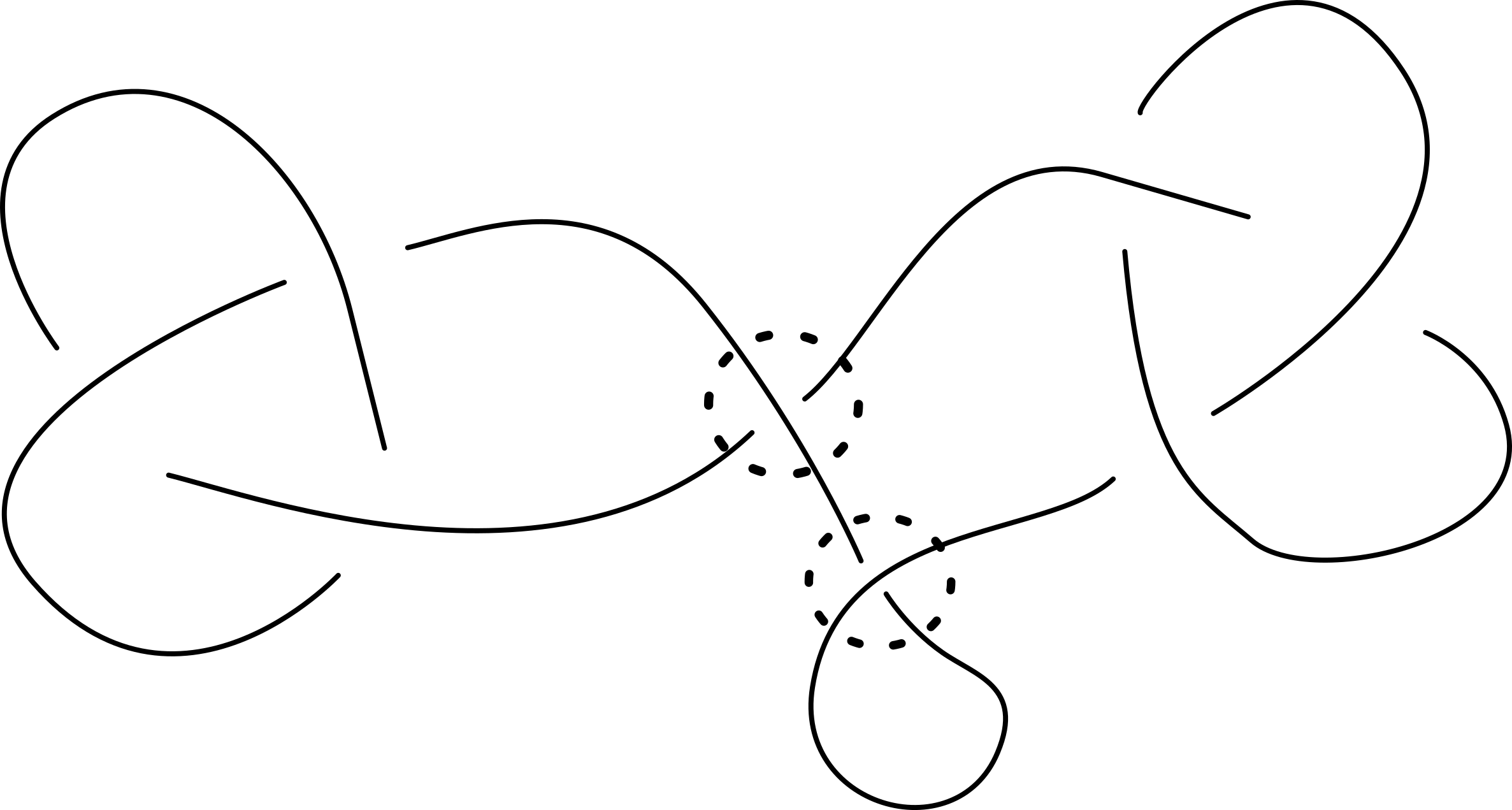}
	\caption{Examples of nugatory crossing.}
	\label{fig:nug}
\end{figure}
The following conjecture is attributed to \cite{Kirby} and has been proven true for several classes of  classical knots \cite{Ichi, Balm, Cre} and virtual knots \cite{Fol}. But the situation for twisted knots remains largely unexplored. We will study the existence of cosmetic crossings in a twisted knot diagram.

\begin{conjecture}\cite{Kirby}(Cosmetic crossing conjecture)
If $K$ admits a crossing change at a crossing $c$ which preserves the oriented isotopy class of the knot, then $c$ 
is nugatory.
\end{conjecture}

\begin{theorem}\label{thm:oddcosmetic}
Let $K$ be a twisted knot diagram with every classical crossing $
c$ of $K$ satisfying one of the following conditions.
	\begin{enumerate}
		
		\item[\rm (i)] $c \notin ZC(K)$ i.e., $ind^{\,0}(c,K)$ is odd 
		
		\item[\rm (ii)] $c \in ZC(K)$
		and $zind^{\,0}(c,K)$ is odd.
		\end{enumerate}
	Then, $K$ does not admit a cosmetic crossing change.
\end{theorem}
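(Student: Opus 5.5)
We argue by contradiction using the crossing change formula of Lemma~\ref{lem:ccmove}. Suppose $K$ admits a cosmetic crossing change at some classical crossing $c$, producing a diagram $K'$ that represents the same oriented twisted knot as $K$. By the invariance theorem for $Q^{z}$, we then have $Q^{z}_{K}(s,t)=Q^{z}_{K'}(s,t)$. Lemma~\ref{lem:ccmove} computes the difference explicitly. If $c\notin ZC(K)$ (condition (i)),
\[
Q^{z}_{K}(s,t)-Q^{z}_{K'}(s,t)=2\,sgn(c)\bigl(s\,t^{p^{0}(c)}-1\bigr)\bigl(s\,t^{p^{1}(c)}-1\bigr).
\]
If $c\in ZC(K)$ with $zind^{\,0}(c,K)$ odd (condition (ii)), then $z(c)=1$, and the same expression results.

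The key step is to show this expression is never the zero polynomial in $\mathbb{Z}[s,t]$. Each factor $s\,t^{p}-1$ with $p\in\{0,1\}$ is either $s-1$ or $st-1$. Both are nonzero polynomials, and $\mathbb{Z}[s,t]$ is an integral domain. Since $sgn(c)=\pm1$, the difference is $\pm2$ times a product of nonzero polynomials, hence nonzero. This contradicts $Q^{z}_{K}=Q^{z}_{K'}$. So no crossing of $K$ can be changed without changing the knot type, and in particular no crossing is cosmetic.

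One subtlety is that Lemma~\ref{lem:ccmove} is stated for the given diagram. The hypotheses (i) and (ii) are conditions on the diagram $K$ at the crossing $c$, and the cosmetic crossing change is performed on that same diagram, so the lemma applies directly. The case split is exactly the one in Lemma~\ref{lem:ccmove}: under (i) we use the first formula, and under (ii) we use the second with $z(c)=1$. The exponent of $s$ is $1$ in both cases, and this is precisely what the hypotheses guarantee.

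The main obstacle is in fact the opposite phenomenon: the conclusion is stronger than the conjecture requires. It shows that every crossing change alters the knot type, including at nugatory crossings. A nugatory crossing $c$ has a smoothing component carrying no other crossings, so $ind^{\,l}(c,K)=0$ and $zind^{\,l}(c,K)=0$. Such a crossing is therefore in $ZC(K)$ with even $zind$, which violates both (i) and (ii). Hence the hypothesis already excludes nugatory crossings, and the result is consistent with the fact that nugatory crossing changes preserve the knot. I would state this explicitly so the conclusion reads: every crossing change on $K$ changes the twisted knot type, and hence $K$ has no cosmetic crossing.
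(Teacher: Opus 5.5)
Your proof is correct and follows the paper's argument: apply Lemma~\ref{lem:ccmove}, observe that under (i) or (ii) the exponent of $s$ is $1$, so $Q^{z}_{K}-Q^{z}_{K'}=2\,sgn(c)(s\,t^{p^{0}(c)}-1)(s\,t^{p^{1}(c)}-1)\neq 0$, contradicting invariance; your integral-domain argument simply makes explicit the non-vanishing the paper asserts. One small correction to your side remark on nugatory crossings: the smoothing components at such a crossing need not be free of crossings, but they share no crossings with each other, since the circle separates them, so every crossing met on a component is a self-crossing of it and cancels, which is what gives $ind^{\,l}(c,K)=zind^{\,l}(c,K)=0$.
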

 \begin{proof}
  	Let a twisted knot diagram $K'$ is obtained from $K$ by a crossing change at a crossing $c$. If $c\notin ZC(K)$, or  $c \in ZC(K)$ with $z(c)=1$, then in both cases we can write the following using Lemma~\ref{lem:ccmove},
  		\begin{align*}
  		Q^{z}_{K}(s,t)-Q^{z}_{K'}(s,t)
  		&=2\,sgn(c)(s\,t^{p^{0}(c)}-1)(s\,t^{p^{1}(c)}-1)\neq 0,
  	\end{align*}
  	where $p^{0}(c),\, p^{0}(c)\, \in \{0,1\}$.
  	And so $K'$ is not isotopic to $K$.
  	Therefore, crossing change at $c$ does not change the isotopy of $K$. Hence the proof.
  \end{proof}
  \begin{figure}[htbp]
  	\centering
  	\includegraphics[width=0.5\textwidth]{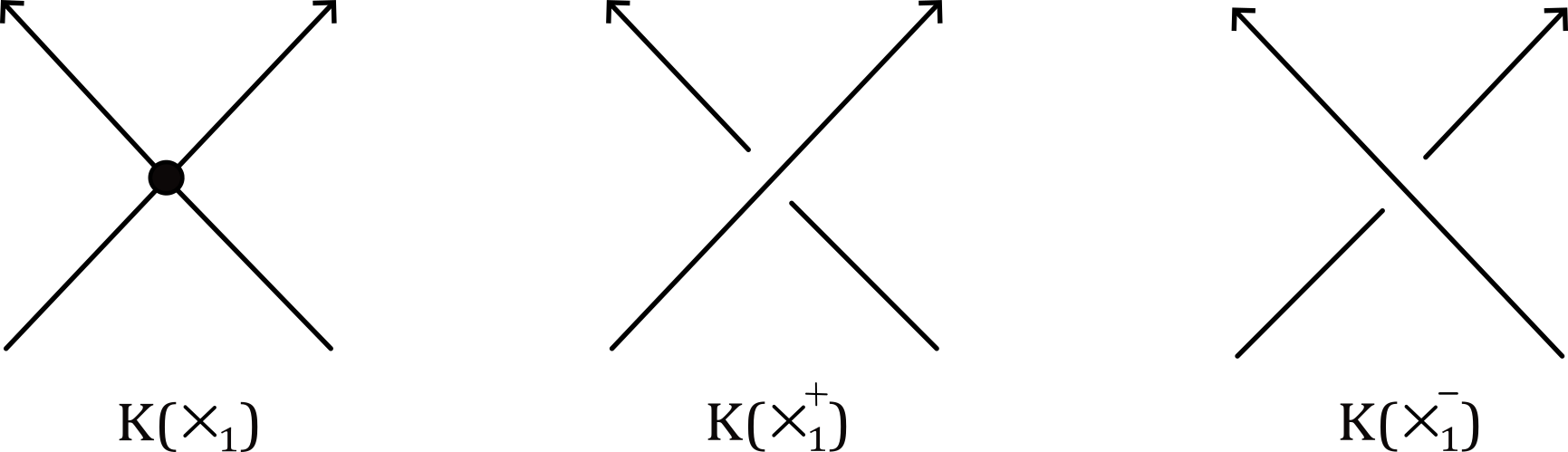}
  	\caption{Resolution of a singular crossing into positive and negative crossings, used in the Vassiliev skein relation.}
  	\label{fig:singres}
  \end{figure}
\subsection{The \texorpdfstring{$Q^{z}$}{Qz}-polynomial is a Vassiliev invariant of order one}
	Let $v$ be an invariant of a twisted knot, and let $v$ take values in an abelian group $G$. Let $K(\times_{1})$ is a singular twisted knot diagram with one singular crossing and $K(\times_{1}^{+})$, $K(\times_{1}^{-})$ correspond to the twisted knot diagram where the singular crossing is replaced by a positive crossing and a negative crossing, respectively (Fig.~\ref{fig:singres}). A twisted knot invariant $v$ can be extended to a singular twisted knot invariant by using the following Vassiliev skein relation \cite{Vass}. $$v(K(\times_{1}))= v(K(\times_{1}^{+}))-v(K(\times_{1}^{-})).$$

\begin{theorem}
	$Q^{z}_{K}(s,t)$ is a Vassiliev invariant of order one.
\end{theorem}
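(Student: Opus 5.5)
To prove the statement, I will extend $Q^{z}$ to singular twisted knot diagrams by the Vassiliev skein relation, iterated over the singular crossings: for a diagram $K(\times_{n})$ with singular crossings $d_1,\dots,d_n$,
\[
Q^{z}_{K(\times_{n})}(s,t)=\sum_{\epsilon\in\{\pm1\}^{n}} \Bigl(\prod_{i=1}^{n}\epsilon_i\Bigr)\, Q^{z}_{K^{\epsilon}}(s,t),
\]
where $K^{\epsilon}$ resolves $d_i$ into a crossing of sign $\epsilon_i$. Being of order one means two things. First, $Q^{z}$ vanishes on every singular diagram with two singular crossings, hence (by the recursive definition) on all diagrams with at least two. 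Second, $Q^{z}$ is not of order zero, i.e.\ it does not vanish on some diagram with one singular crossing.

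For the first claim I use Lemma~\ref{lem:ccmove}. Fix $K(\times_{2})$ with singular crossings $d_1,d_2$. I will write the alternating sum as $\bigl(Q^{z}_{K^{++}}-Q^{z}_{K^{-+}}\bigr)-\bigl(Q^{z}_{K^{+-}}-Q^{z}_{K^{--}}\bigr)$. Each bracket is a single crossing change at $d_1$, with $d_2$ held positive and then negative, respectively. By Lemma~\ref{lem:ccmove}, each bracket equals $2\,(s^{b}t^{p^{0}(d_1)}-1)(s^{b}t^{p^{1}(d_1)}-1)$, where $b=1$ if $d_1\notin ZC$ and $b=z(d_1)$ otherwise. (The sign $sgn(d_1)=+1$ is fixed in both brackets.) So it suffices to show that the data $p^{0}(d_1)$, $p^{1}(d_1)$, the membership of $d_1$ in $ZC$, and $z(d_1)$ are unchanged when $d_2$ is switched.

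The bar parities $p^{l}(d_1)$ depend only on the underlying curve and the bars, not on crossing information, so they are unchanged. For the affine index, switching $d_2$ either leaves $K_{d_1}$ identical (when $d_2$ is a self-crossing of one smoothing component, contributing zero) or changes $ind^{\,l}(d_1)$ by $\pm 2$ (when $d_2$ is a mixed crossing). This is exactly the computation in the proof of Lemma~\ref{lem:ccmove} for crossings $e\neq c$. Hence the parity of $ind^{\,l}(d_1)$, and with it membership in $ZC$, is preserved.

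The main obstacle is $z(d_1)$. Switching $d_2$ could change which \emph{other} crossings are zero crossings, and $zind$ sums only over zero crossings. I will argue as follows. The parity of $ind^{\,l}(e)$ is unchanged for every crossing $e\ne d_2$ by the $\pm2$ argument above, and for $e=d_2$ by the label-swap argument of Lemma~\ref{lem:ccmove}. Therefore $ZC$ is the same set for both resolutions. The contribution of $d_2$ to $zind^{\,l}(d_1)$, when $d_2$ is mixed and a zero crossing, changes by $\pm2$. All other contributions are unchanged. So $z(d_1)$ is preserved, the two brackets coincide, and the alternating sum vanishes.

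For the second claim, it suffices to show that $Q^{z}$ is not of order zero, i.e.\ to exhibit one singular crossing with nonzero value. By Lemma~\ref{lem:ccmove}, $Q^{z}_{K(\times_1)}=2(s^{b}t^{p^0}-1)(s^{b}t^{p^1}-1)$. Take the knot of Example~\ref{rem:unknot}, where $Q^{z}_K=-2(st-1)^2$ and the trivial knot has value $0$. Making the crossing changed there singular gives value $\pm2(st-1)^2\neq0$. Hence $Q^{z}$ has order exactly one.
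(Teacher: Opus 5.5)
Your proof is correct and follows essentially the same route as the paper. Both arguments expand the doubly singular value by the skein relation, group it as two crossing changes at one singular crossing with the other resolution held fixed, apply Lemma~\ref{lem:ccmove} after checking that the relevant data agree, and then exhibit a one-singular-crossing diagram with nonzero value (the paper uses the resolutions in Fig.~\ref{fig:vasex}, you use Example~\ref{rem:unknot}). Your explicit check that $ZC$ and $z(d_1)$ do not change is more careful than the paper, which simply reuses $z(\times_2^{+})$ in both brackets.

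One small correction: when a mixed crossing $d_2$ is switched, both $sgn(d_2)$ and its over/under type on the $l$-component flip. So its contribution $(-1)^{\epsilon}sgn(d_2)$ to $ind^{\,l}(d_1)$ and to $zind^{\,l}(d_1)$ is exactly unchanged, not shifted by $\pm 2$, as the paper's own proof of Lemma~\ref{lem:ccmove} records. This only strengthens your parity argument.
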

\begin{proof}
	Let $K(\times_{1}, \times_{2})$ be a twisted knot diagram with two singular crossings $\times_{1}, \times_{2}$. To show that $Q^{z}_{K}(s,t)$ is a Vassiliev invariant of order one, we first prove that $Q^{z}_{K(\times_{1}, \times_{2})}(s,t)=0$.
	Using the Vassiliev skein relation, we can write
	\begin{align*}
		Q^{z}_{K(\times_{1}, \times_{2})}(s,t)=& Q^{z}_{K(\times_{1}, \times_{2}^{+})}(s,t)-Q^{z}_{K(\times_{1}, \times_{2}^{-})}(s,t),\\
		=& Q^{z}_{K(\times_{1}^{+}, \times_{2}^{+})}(s,t)-Q^{z}_{K(\times_{1}^{+}, \times_{2}^{-})}(s,t)\\
		&-Q^{z}_{K(\times_{1}^{-}, \times_{2}^{+})}(s,t)+Q^{z}_{K(\times_{1}^{-}, \times_{2}^{-})}(s,t),
	\end{align*}		
where $\times_{i}^{+},\,i=1,2$ (resp. $\times_{i}^{-}$) denotes that the singular crossing $\times_{i},\,i=1,2$ is replaced by a positive (resp. negative) classical crossing.
Notice that \[|ind^{\,0} ( \times_{2}^{+}, K(\times_{1}^{+}, \times_{2}^{+}) ) - ind^{\,0}(\times_{2}^{+}, K(\times_{1}^{-}, \times_{2}^{+}) )| =0\, \text{or}\,2,\]
depending on whether
 $\times_{1}^{+}$ and $\times_{1}^{-}$ are self crossings of the smoothing components of $\times_{2}^{+}$ in $ K(\times_{1}^{+}, \times_{2}^{+})$ and $ K(\times_{1}^{-}, \times_{2}^{+})$ or not. 

 Therefore, $$ \times_{2}^{+} \in ZC(K(\times_{1}^{+}, \times_{2}^{+}))\;\; \text{ if and only if } \;\;\times_{2}^{+} \in ZC(K(\times_{1}^{-}, \times_{2}^{+})).$$
 
  Moreover, the bar parity in the components are same in the smoothing components of $K(\times_{1}^{+}, \times_{2}^{+})  $ and $K(\times_{1}^{-}, \times_{2}^{+}) $ at $\times_{2}^{+}$.

Case 1: If the crossing $\times_{2}^{+} \in ZC(K(\times_{1}^{+}, \times_{2}^{+}))$ then by Theorem~\ref{lem:ccmove},
	\[Q^{z}_{K(\times_{1}^{+}, \times_{2}^{+})}(s,t)-Q^{z}_{K(\times_{1}^{+}, \times_{2}^{-})}(s,t)= 2(s^{z(\times_{2}^{+})}t^{p^{0}(\times_{2}^{+})}-1)(s^{z(\times_{2}^{+})}t^{p^{1}(\times_{2}^{+})}-1),\]
and
	\[Q^{z}_{K(\times_{1}^{-}, \times_{2}^{+})}(s,t)-Q^{z}_{K(\times_{1}^{-}, \times_{2}^{-})}(s,t)= 2(s^{z(\times_{2}^{+})}t^{p^{0}( \times_{2}^{+})}-1)(s^{z(\times_{2}^{+})}t^{p^{1}(\times_{2}^{+})}-1).\]

Case 2: If the crossing $\times_{2}^{+} \notin ZC(K(\times_{1}^{+}, \times_{2}^{+}))$ then by Theorem~\ref{lem:ccmove},
\[Q^{z}_{K(\times_{1}^{+}, \times_{2}^{+})}(s,t)-Q^{z}_{K(\times_{1}^{+}, \times_{2}^{-})}(s,t)= 2(s\,t^{p^{0}(\times_{2}^{+})}-1)(s\,t^{p^{1}(\times_{2}^{+})}-1),\]
and
\[Q^{z}_{K(\times_{1}^{-}, \times_{2}^{+})}(s,t)-Q^{z}_{K(\times_{1}^{-}, \times_{2}^{-})}(s,t)= -2(s\,t^{p^{0}(\times_{2}^{+})}-1)(s\,t^{p^{1}(\times_{2}^{+})}-1).\]
Therefore, in both cases, \[	Q^{z}_{K(\times_{1}, \times_{2})}(s,t)=0.\]
Hence, $Q^{z}_{K}(s,t)$ is a Vassiliev invariant of order $\leq 1$. 
\begin{figure}[htbp]
	\centering
	\includegraphics[width=0.8\textwidth]{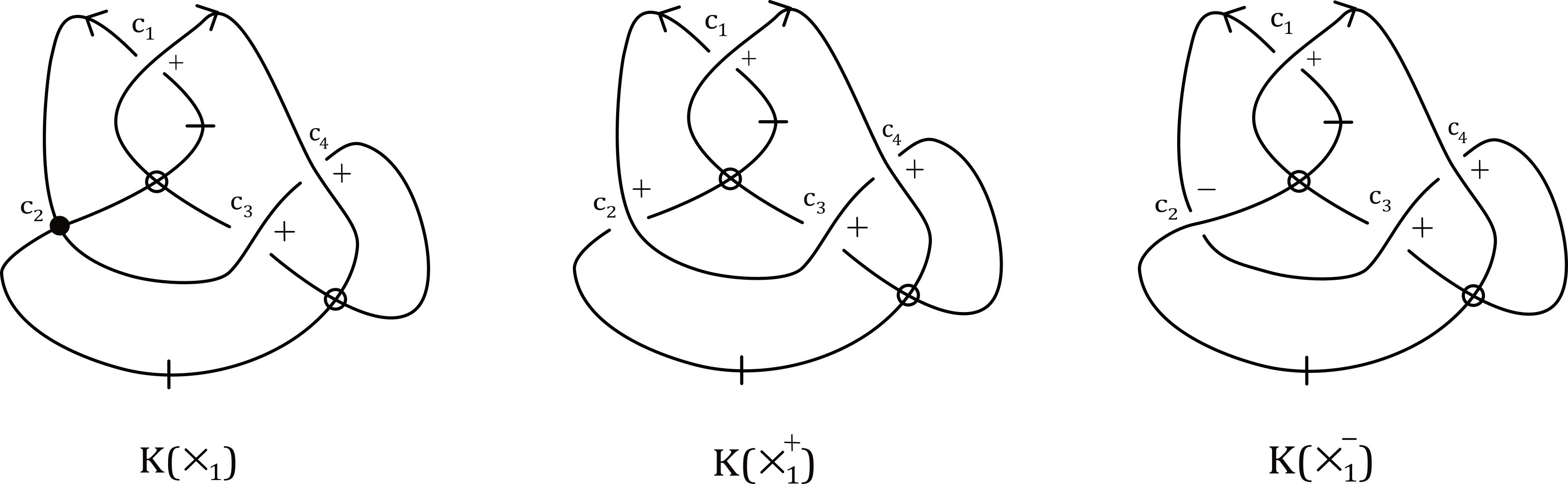}
	\caption{The singular twisted knot diagram $K(\times_{1})$ and its resolutions $K(\times_{1}^{+})$ and $K(\times_{1}^{-})$.}
	\label{fig:vasex}
\end{figure}

Now we consider the singular twisted knot diagram $K(\times_{1})$ in Fig.~\ref{fig:vasex}. Then 
\[Q^{z}_{K(\times_{1})}(s,t)= Q^{z}_{K(\times_{1}^{+})}(s,t) - Q^{z}_{K(\times_{1}^{-})}(s,t).\]
where $K(\times_{1}^{+})$ and $K(\times_{1}^{-})$ denote the diagrams resulting from resolving the singular crossing $c$ of $K(\times_{1})$ into a positive and a negative crossing, respectively, see Fig.~\ref{fig:vasex}.

Now,  by example~\ref{ex:vasdia}, we know that $Q^{z}_{K(\times_{1}^{+})}(s,t)= 3(s-1)^{2}+ (st-1)^{2}$. The values of $z(c), p^{0}(c)$, and $p^{1}(c)$ for the diagram $K(\times_{1}^{-})$ are given in Table~\ref{tab:vasex}. Therefore,
\[Q^{z}_{K(\times_{1}^{-})}(s,t)=3(s-1)^{2}- (st-1)^{2}.\]
Calculations of $Q^{z}_{K(\times_{1}^{+})}(s,t)$ and $Q^{z}_{K(\times_{1}^{-})}(s,t)$ imply that \[Q^{z}_{K(\times_{1})}(s,t) =Q^{z}_{K(\times_{1}^{+})}(s,t)-Q^{z}_{K(\times_{1}^{-})}(s,t) = 2(st-1)^{2} \neq 0.\] Thus, $Q^{z}_{K}(s,t)$ is a Vassiliev invariant of order one.

\begin{table}[htbp]
	\centering
	\resizebox{0.5\textwidth}{!}{
	\begin{tabular}{|c|c|c|c|c|c|} 
		\hline
		$c$ & $sgn(c)$ & $ind^{\,0}(c)$  & $z(c)$ & $p^{0}(c)$ & $p^{1}(c)$ \\ 
		\hline 
		$c'_{1}$ & $+1$ & $0$  & $1$ &  $0$ & $0$ \\ 
		\hline
		$c'_{2}$ & $-1$ &  $-1$ &  $\star$ & $1$ & $1$ \\
		\hline
		$c'_{3}$ &$+1$ &  $-1$  & $\star$ & $0$ & $0$ \\
		\hline   
		$c'_{4}$ &$+1$ &  $0$  &$1$ & $0$ & $0$ \\
		\hline 
	\end{tabular}}
	\caption{Values of $z(c), p^0(c), p^1(c)$ for the crossings of $K(\times_{1}^{-})$.}
	\label{tab:vasex}
\end{table}
 \end{proof}
\section*{Statements and Declarations}

\textbf{Author contributions} T. Mahato and P. Madeti contributed to
conceptualization, methodology, investigation, and writing. T. Mahato performed the formal analysis and prepared the
manuscript. P. Madeti contributed to supervision, validation, and
review \& editing.
\vspace{3mm}

\textbf{Funding} This work was supported by the Anusandhan National
Research Foundation (ANRF), Government of India (Grant no.
CRG/2023/004921/343).
\vspace{3mm}

\textbf{Data Availability Statement} No datasets were generated or
analysed during the current study.
\vspace{3mm}

\textbf{Declarations}
\textbf{Conflict of interest} The authors declare no Conflict of
interest.
	\bibliographystyle{plain}

\end{document}